\numberwithin{equation}{section}
\tikzset{cross/.style={cross out, draw=black, minimum size=2*(#1-\pgflinewidth), inner sep=0pt, outer sep=0pt},
	cross/.default={1pt}}
\newfont{\msbm}{msbm10 at 11pt}
\newfont{\msbmsm}{msbm10 at 8pt}
\newtheorem{theorem}{Theorem}[section]
\newtheorem{lemma}[theorem]{Lemma}
\newtheorem{proposition}[theorem]{Proposition}
\theoremstyle{definition}
\newtheorem{remark}[theorem]{Remark}
\def\E{\mathbb{E}}
\def\Var{\textup{Var}}
\def\Cov{\textup{Cov}}
\begin{document}
\title{Asymptotics for the site frequency spectrum associated with the genealogy of a birth and death process}
\author{Jason Schweinsberg and Yubo Shuai \\
University of California at San Diego}
\maketitle

\footnote{{\it AMS 2020 subject classifications}.  Primary 60J80; Secondary 60J90, 92D15, 92D25}

\footnote{{\it Key words and phrases}.  Birth and death process, Coalescent point process, Site frequency spectrum}

\vspace{-.6in}
\begin{abstract}
Consider a birth and death process started from one individual in which each individual gives birth at rate $\lambda$ and dies at rate $\mu$, so that the population size grows at rate $r = \lambda - \mu$.  Lambert \cite{lambert2018coalescent} and Harris, Johnston, and Roberts \cite{harris2020coalescent} came up with methods for constructing the exact genealogy of a sample of size $n$ taken from this population at time $T$.  We use the construction of Lambert, which is based on the coalescent point process, to obtain asymptotic results for the site frequency spectrum associated with this sample.  In the supercritical case $r > 0$, our results extend results of Durrett \cite{durrett2013population} for exponentially growing populations.  In the critical case $r = 0$, our results parallel those that Dahmer and Kersting \cite{dahmer2015internal} obtained for Kingman's coalescent.
\end{abstract}

\section{Introduction}

In mathematical population genetics, one is often interested in understanding the genealogy of a sample from a population.  That is, we take a sample from a population at some time and then trace the ancestral lines backwards in time.  As we trace back the ancestral lines, the lineages will merge, until eventually all of the sampled individuals are traced back to one common ancestor, leading to what is known as a coalescent process.  
The simplest coalescent process is Kingman's coalescent \cite{kingman1982coalescent}, in which each pair of lineages merges at rate 1.  It is well-known that Kingman's coalescent models the genealogy of many populations with constant size.

In this paper, we will consider instead a population with varying size which evolves according to a critical or supercritical birth and death processes in continuous time.  More specifically, we begin with one individual at time zero, and then each individual independently gives birth to a new individual at rate $\lambda$ and dies at rate $\mu$. We write $r=\lambda-\mu$ for the net growth rate and call the birth and death process supercritical if $r>0$ and critical if $r=0$.  For $t \geq 0$, we denote the population size at time $t$ by $N_t$. We then choose a random sample of $n$ individuals from the population at some time $T$, conditional on $N_T\ge n$. See Figure \ref{Figure: genealogical tree} for an illustration of the genealogical tree of a sample.

\begin{figure}[h]
    \centering
    \begin{tikzpicture}
        \draw [very thick] (3,0)--(3.75,1);
        \draw [very thick] (4.5,0)--(3.75,1);
        
        \draw [very thick] (7.5,0)--(8.25,1.5);
        \draw [dotted, very thick] (9,0)--(8.25,1.5);
        
        \draw [dotted, very thick] (6,0)--(4.5,2);
        \draw [very thick] (3.75,1)--(4.5,2);
        
        \draw [very thick] (10.5,0)--(9,2.5);
        \draw [very thick] (8.25,1.5)--(9,2.5);
        
        \draw [very thick] (4.5,2)--(7,4.5);
        \draw [very thick] (9,2.5)--(7,4.5);
        
        \draw [dotted, very thick] (5.5,3)--(6.5,2);
        \draw [dotted, very thick] (6,2.5)--(5.25,1.75);
        \draw [dotted, very thick] (8,3.5)--(7.5,3);
        \draw [dotted, very thick] (9.75,1.25)--(9.5,0.75);

        \draw (6.5,2) node[cross=4pt,very thick] {};
        \draw (5.25,1.75) node[cross=4pt,very thick] {};
        \draw (7.5,3) node[cross=4pt,very thick] {};
        \draw (9.5,0.75) node[cross=4pt,very thick] {};
        
        \node at (3,-0.5){1};
        \node at (4.5,-0.5){2};
        \node at (6,-0.5){3};
        \node at (7.5,-0.5){4};
        \node at (9,-0.5){5};
        \node at (10.5,-0.5){6};
        
        
        
        
        
        
        \node at (10.75,0.25){$G$};
        \node at (7.25,0.25){$F$};
        \node at (8.5,2.5){$E$};
        \node at (4.75,0.25){$D$};
        \node at (2.75,0.25){$C$};
        \node at (3.5,1.25){$B$};
        \node at (6.75,4.75){$A$};
    \end{tikzpicture}
    \caption{The genealogical tree of a sample of size $n = 4$ from a population of size $N_T = 6$.  Crosses represent deaths and dotted lines represent individuals that are not sampled.} \label{Figure: genealogical tree}
\end{figure}
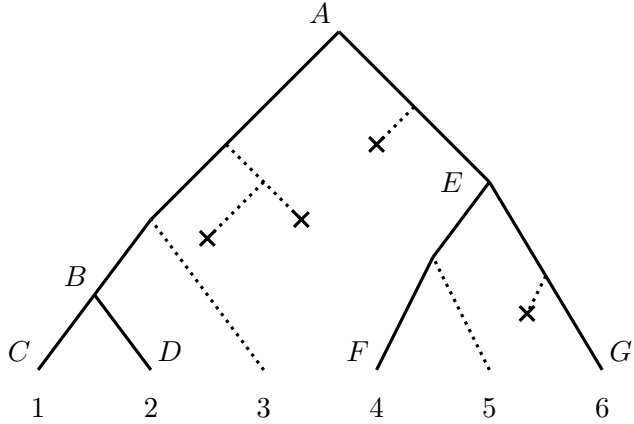

To understand the genealogical tree, we need to understand the coalescence times, which are the times when two branches of the tree merge because the sampled individuals share a common ancestor.  The study of the coalescence times in the genealogy of a branching process was pioneered by Fleischmann and Siegmund-Schultze in \cite{Fleischmann1977TheSO}. They considered a critical Galton-Watson process where all the individuals alive at time $T$ are sampled. They showed that the coalescent times can be approximated by the coalescent times of a Yule process under a deterministic time change. In \cite{OConnell1995TheGO}, O'Connell generalized their results to near-critical Galton-Watson processes, where the expected number of offspring is $1+\alpha/T$ for a fixed constant $\alpha$.

Understanding the genealogy of a sample of size $n$ from a birth and death process is more complicated. 
 While preliminary results were obtained in \cite{zub76, durr78, le14, gh18}, a full description of the genealogy of a sample was obtained only within the last few years.  Harris, Johnston and Roberts \cite{harris2020coalescent} considered the coalescent times in a uniform sample of size $n$ from a continuous-time Galton-Watson process. For birth and death processes, they obtained an exact formula for the joint density of the coalescence times. Lambert \cite{lambert2018coalescent} obtained the same joint density of the coalescent times in a birth and death process using the coalescent point process, an approach which we will describe in detail in Section \ref{Section: Coalescent point process}. Johnston \cite{johnston2019genealogy} generalized this formula to Galton-Watson processes with general offspring distributions.  Harris, Johnston, and Pardo \cite{hjp23} extended the asymptotic results in \cite{harris2020coalescent} to the case of heavy-tailed offspring distributions.  Johnston and Lambert \cite{jl23} considered multitype branching processes, and Harris, Palau, and Pardo \cite{hpp22} considered varying environments.  Burden and Griffiths \cite{bg22} obtained some closely related results for the genealogy of a sample from a Feller diffusion, which arises as a scaling limit of nearly critical birth and death processes.

We are interested in the distribution of the branch lengths in the genealogical tree of a sample from a birth and death process.
We write $L^k_{n,T}$ for the total length of the branches that support exactly $k$ leaves in the genealogical tree. For example, in Figure \ref{Figure: genealogical tree}, the branch from $A$ to $B$ supports two leaves, namely 1 and 2. The branch from $B$ to $C$ supports only one leaf, namely 1. In general, writing, for example, $AB$ for the length of the branch from $A$ to $B$, we have $L^1_{n,T}=BC+BD+EF+EG$, $L^2_{n,T}=AB+AE$, and $L^3_{n,T}=L^4_{n,T}=0$.  Branches supporting only one leaf are called external, while other branches are called internal.

These branch lengths are of interest because a mutation that occurs on a branch that supports $k$ leaves in the genealogical tree will be inherited by $k$ of the $n$ individuals in the sample. Let $M^k_{n,T}$ be the number of mutations inherited by exactly $k$ individuals in the sample. A commonly used summary statistic for genetic data is the site frequency spectrum, which consists of the numbers $M^1_{n,T}, \dots, M^{n-1}_{n,T}$.  Suppose that, independently of the birth and death process, mutations occur at a constant rate $\nu$ along each branch of the genealogical tree and all mutations are distinct. Then the conditional distribution of $M^k_{n,T}$ given $L^k_{n,T}$ is Poisson with mean $\nu L^k_{n,T}$. Consequently, to understand the asymptotic behavior of the site frequency spectrum, it suffices to understand the asymptotic behavior of the branch lengths. We will therefore focus on the branch lengths $L^k_{n,T}$ in this paper.


\subsection{The critical case}
\label{Subsection: the critical case}

In the critical case, we assume that the birth rate $\lambda$ and the death rate $\mu$ are equal 1. The general case can be reduced to this case using a time change.

Below is our main theorem in the critical case.  We allow the sampling time $T_n$ to depend on the population size.  The result shows that as long as $n/T_n \rightarrow 0$ as $n \rightarrow \infty$, the quantities $L^k_{n,T_n}$, normalized by the population size at the time of sampling, are asymptotically independent and normally distributed.

\begin{theorem}\label{Theorem}
In the critical case with $\lambda=\mu=1$, let $K$ be a fixed positive integer, and let $\{T_n\}$ be a sequence such that 
\begin{equation}\label{Introduction: sequence condition}
    \lim_{n\rightarrow\infty}\frac{n}{T_n}=0.
\end{equation} 
Then as $n\rightarrow\infty,$
$$
\sqrt{\frac{n}{\log n}}\left\{\left(\frac{L^k_{n,T_n}}{N_{T_n}}-\frac{1}{k}\right)\right\}_{k=1}^K \Rightarrow N(0, I_{K\times K})
$$
where $\Rightarrow$ denotes convergence in distribution as $n \rightarrow \infty$, $N(0, V)$ denotes the centered multivariate normal distribution with covariance matrix $V$, and $I_{K\times K}$ is the $K\times K$ identity matrix.
\end{theorem}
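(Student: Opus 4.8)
The plan is to use the coalescent point process representation of Section~\ref{Section: Coalescent point process} to write each $L^k_{n,T_n}$ as a sum of weakly dependent summands, and then to prove a central limit theorem for such sums by a truncation argument. First I would condition on $N_{T_n}$. Lambert's construction shows that, given $N_{T_n}=N$ with $N\ge n$, the coalescence times $H_1,\dots,H_{n-1}$ of the sample are --- up to an asymptotically negligible error coming from the random sizes of the blocks of population individuals that get grouped together --- i.i.d.\ with an explicit law whose tail on the relevant range behaves like $(1+\rho t)^{-1}$ for a parameter $\rho$ of order $n/N$. The hypothesis $n/T_n\to 0$ guarantees both that $N_{T_n}\ge n$ with probability tending to $1$ and that the blocks are large, which is what makes this i.i.d.\ approximation valid and, at the same time, makes the window $(1/\rho,\,T_n)$ on which the tail is heavy span $\log n$ orders of magnitude. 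In this picture a branch supporting exactly the contiguous block of leaves $\{j,\dots,j+k-1\}$ contributes $\bigl(\min(H_{j-1},H_{j+k-1})-\max(H_j,\dots,H_{j+k-2})\bigr)^+$, with natural conventions at the two ends of the sample and an empty maximum read as $0$ when $k=1$, so that, apart from two boundary terms which are easily seen to be negligible after division by $N_{T_n}$, $L^k_{n,T_n}$ is a sum over $j$ of these contributions, each a function of the $k+1$ consecutive times $H_{j-1},\dots,H_{j+k-1}$. In particular, for any fixed $a_1,\dots,a_K$ the combination $\sum_{k=1}^K a_k L^k_{n,T_n}$ is a sum of $(K+1)$-dependent random variables.

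I would next carry out the moment computations. A direct calculation gives $\E[L^k_{n,T_n}\mid N_{T_n}=N]\sim N/k$, with the error in $\E[L^k_{n,T_n}/N_{T_n}]-1/k$ of order $o(\sqrt{(\log n)/n})$. The key structural fact is that each summand --- a minimum of two of the $H_i$ corrected by a maximum --- has a tail of order $t^{-2}$ up to the truncation at $T_n$, hence a finite mean but a truncated second moment of order $\log n$; consequently $\Var(L^k_{n,T_n}\mid N_{T_n}=N)$ is of order $N^2(\log n)/n$, the $(K+1)$-dependence corrections and the boundary terms being of smaller order, and the precise constant (matching the stated limit $1$) drops out of this calculation. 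The crucial covariance estimate is that for $k\ne k'$ the covariance of two overlapping summands of $L^k$ and $L^{k'}$ carries no factor of $\log n$: a short case analysis of the planar clade structure shows that their simultaneous largeness forces at least three of the coalescence times involved to be large, an event of probability of order $t^{-3}$, for which the relevant integral stays bounded as $T_n\to\infty$. Therefore $\Cov(L^k_{n,T_n},L^{k'}_{n,T_n}\mid N_{T_n})=o\bigl(N^2(\log n)/n\bigr)$, so that after normalization by $\sqrt{n/\log n}$ the limiting covariance matrix is the identity.

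It then remains, conditionally on $N_{T_n}$ and by the Cram\'er--Wold device, to prove a central limit theorem for $\sqrt{n/\log n}\,\sum_{k=1}^K a_k\bigl(L^k_{n,T_n}/N_{T_n}-1/k\bigr)$. Because the summands have infinite variance, no $m$-dependent central limit theorem applies directly; instead I would truncate each summand at level $\theta_n=\varepsilon_n\sqrt{\Var}$ with $\varepsilon_n\to 0$ slowly enough that $\varepsilon_n\sqrt{\log n}\to\infty$. By the $t^{-2}$ tail, the expected number of summands exceeding $\theta_n$ is of order $1/(\varepsilon_n^2\log n)\to 0$, so with probability tending to $1$ the truncated and untruncated sums coincide, and one checks that truncation shifts the mean by $o(\sqrt{\Var})$. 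For the truncated sum, a big-block/small-block decomposition --- blocks of length $\ell_n$ separated by gaps of length $K+1$, so that big-block sums are independent and the small blocks are negligible --- reduces the problem to a sum of independent bounded variables, to which the Lindeberg--Feller theorem applies with the variance computed above. Finally, since the conditional limit $N(0,I_{K\times K})$ does not depend on $N_{T_n}$ and $N_{T_n}$ lies in the good range with probability tending to $1$, the unconditional convergence follows by integrating characteristic functions.

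The step I expect to be the main obstacle is this last central limit theorem. Since the squared summands are only borderline integrable, the single largest summand is not negligible compared with $\sqrt{\Var}$ before truncation, and the argument turns on choosing the truncation level in the narrow window in which simultaneously the removed mass is negligible (which forces $\varepsilon_n\sqrt{\log n}\to\infty$) and the truncated variance is still asymptotic to the full truncated variance. Interwoven with this is the bookkeeping required to carry the conditioning on $N_{T_n}$, and the asymptotic i.i.d.\ approximation for the sampled coalescence times, through all of the moment and block estimates uniformly over the relevant range of $N_{T_n}$; this is routine but lengthy, and it is the place where the assumption $n/T_n\to 0$ is used repeatedly.
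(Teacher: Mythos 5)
Your proposal is essentially the paper's argument: the CPP representation, the observation that each $L^k$ is a sum of $(K+1)$-dependent functionals of consecutive coalescence times, the $t^{-2}$ tail giving mean $1/k$ but truncated second moment of order $\log n$, truncation at a level $\theta_n=\varepsilon_n\sqrt{n\log n}$ with $\varepsilon_n\to 0$ and $\varepsilon_n\sqrt{\log n}\to\infty$ (the paper takes $\theta_n=\sqrt{n}\log\log n$, which sits exactly in your window), the $O(1)$ bound on covariances of overlapping summands because three of the underlying variables must be large, and an $m$-dependent CLT for the truncated array (the paper cites Janson's triangular-array version rather than redoing the big-block/small-block argument, but these are the same thing). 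The one genuine organizational difference is the conditioning variable. The paper never conditions on $N_{T_n}$: in Lambert's construction the coalescence times are \emph{exactly} i.i.d.\ conditional on the latent sampling probability $Y_{n,T}$ (equivalently $Q_{n,T}$), and $N_{T_n}$ is then a separate sum of conditionally i.i.d.\ geometric variables, so the division by $N_{T_n}$ is handled by showing $T_n/(Q_{n,T_n}N_{T_n})-1/n=O_p(n^{-3/2})$. Your route, conditioning on $N_{T_n}$ and asserting that the sampled coalescence times are then approximately i.i.d.\ with rate $\rho\asymp n/N$, requires proving that approximation, since given $N$ the gaps between sampled individuals are a random composition of $N$ into $n+1$ parts (exchangeable but not independent) and $\rho$ has posterior fluctuations of relative order $n^{-1/2}$. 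That error enters $L^k/N_{T_n}$ at order $n^{-1/2}$, which is below the CLT scale $\sqrt{(\log n)/n}$ only by a factor of $\sqrt{\log n}$ --- so this step, which you defer as ``routine but lengthy bookkeeping,'' is in fact the quantitative heart of the error analysis and cannot be dispatched qualitatively; the paper's choice of conditioning variable is precisely what makes it tractable via explicit maximal couplings. With that caveat, your plan is sound and would yield the theorem.
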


\begin{remark}
Recall that a famous result of Yaglom \cite{yaglom1947certain} states that for large $t$, the population size of a critical birth and death process at time $t$ conditioned on survival until time $t$ is approximately $Wt$, where $W$ is exponentially distributed with mean 1. Therefore, the assumption \eqref{Introduction: sequence condition} requires the population size at time $T_n$ to be much larger than the sample size.
\end{remark}

The quantity $L^k_{n,T_n}$ has been studied for Kingman's coalescent.  Fu and Li \cite{fu1993statistical} obtained $\E[L^1_{n,T_n}]=2$, and Fu \cite{fu1995statistical} obtained $\E[L^k_{n,T_n}]= 2/k$. Then Janson and Kersting \cite{janson2010external} established the asymptotic normality of $L^1_{n,T_n}$, and Dahmer and Kersting \cite{dahmer2015internal} showed that
\begin{equation}\label{dkresult}
\sqrt{\frac{n}{4\log n}}\left\{\left(L^k_{n}-\frac{2}{k}\right)\right\}_{k=1}^K \Rightarrow N(0,I_{K\times K}).
\end{equation}
Our result can be viewed as an analog of the result (\ref{dkresult}) for the critical birth and death process. The intuition behind this connection is that Kingman's coalescent models the genealogy of a sample from a population with fixed size. For the critical birth and death process, we will see that when $n$ is large, most of the coalescent events occur fairly quickly, when the population size is approximately $WT_n$ for some exponentially distributed random variable $W$. Therefore, if we scale the coalescent times by the population size, so that the randomness from $W$ is removed, then the genealogy of the sample should be well approximated by Kingman's coalescent.  Indeed, it is well known that critical branching processes converge to a limit process known as Feller's diffusion when the population size tends to infinity, and the genealogy of Feller's diffusion is a time change of Kingman's coalescent, an idea which goes back to \cite{perkins}; see also Theorem 5.1 of \cite{dk99}.

\subsection{The supercritical case}

In the supercritical case, the shape of the coalescent tree is much different from the shape in the critical case.  Because the population is growing rapidly, the population will be large a short time after time zero, and it is likely that most of the sampled individuals will be descended from distinct ancestors at that time.  Consequently, the coalescent tree is more star-shaped, with most of the coalescence occurring near the root of the tree.  Unlike in the critical case, there are long external branches supporting only one leaf.  The difference is illustrated in Figure \ref{fig:treeshape} below.

	\begin{figure}[h]
		\centering
		\begin{tikzpicture}[scale=0.75]
			\draw [very thick] (0,0)--(0.75,1);
			\draw [very thick] (1.5,0)--(0.75,1);
			\draw [very thick] (4.5,0)--(5.25,1.5);
			\draw [very thick] (6,0)--(5.25,1.5);
			\draw [very thick] (3,0)--(1.5,2);
			\draw [very thick] (0.75,1)--(1.5,2);
			\draw [very thick] (5.25,1.5)--(3.5,3.5);
			\draw [very thick] (1.5,2)--(3.5,3.5);
			
			\draw [very thick] (8.5,0)--(10.5, 2.5);
			\draw [very thick] (10,0)--(10.5, 2.5);
			\draw [very thick] (11.5,0)--(11,3);
			\draw [very thick] (13,0)--(12,3);
			\draw [very thick] (14.5,0)--(12,3);
			\draw [very thick] (10.5, 2.5)--(11,3);
			\draw [very thick] (12,3)--(11.5,3.5);
			\draw [very thick] (11,3)--(11.5,3.5);
		
			\node at (0,-0.3){1};
			\node at (1.5,-0.3){2};
			\node at (3,-0.3){3};
			\node at (4.5,-0.3){4};
			\node at (6,-0.3){5};
			
			\node at (8.5,-0.3){1};
			\node at (10,-0.3){2};
			\node at (11.5,-0.3){3};
			\node at (13,-0.3){4};
			\node at (14.5,-0.3){5};
			
			\node at (3.5, 4){Critical case};
			\node at (11.7, 4){Supercritical case};
		\end{tikzpicture}
  \caption{Comparison of the tree shapes in the critical and supercritical cases} \label{fig:treeshape}
	\end{figure}
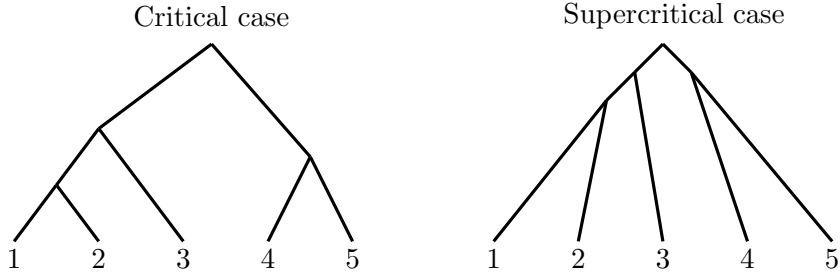

Durrett \cite{durrett2013population} considered the case in which the sample size $n$ and the growth rate $r > 0$ are fixed, and the sampling time $T$ tends to infinity.  He showed that when $2 \leq k \leq n-1$, 
we have 
\begin{equation}\label{durrlimit}
\lim_{T \rightarrow \infty} \E[L^k_{n,T}] = \frac{n}{rk(k-1)}.
\end{equation}
Similar calculations appear in \cite{williams2016identification} and \cite{Bozic2016clonal}, which focused on applications to cancer.
Gunnarsson, Leder, and Foo \cite{gunnarsson2021exact} calculated the exact expectation in the case when the entire population is sampled.

While the limit in expectation \eqref{durrlimit} holds for fixed $n$ as $T \rightarrow \infty$, it is also natural to consider the asymptotic distribution of $L_{n,T}^k$ when the sample size $n$ becomes large.
We allow $\lambda=\lambda_n$, $\mu=\mu_n$ and $T=T_n$ to depend on $n$. Allowing the birth and death rates to depend on $n$ makes it possible for the growth rate $r_n=\lambda_n-\mu_n$ to go to 0.
In \cite{johnson2023estimating}, we showed that if the condition \eqref{Introduction: sequence condition super LLN} below holds, then $$\frac{r_n}{n} L_{n,T_n}^1 - r_n T_n + \log n + 1 \Rightarrow \log W,$$
where $W$ has the exponential distribution with mean $1$.  We also showed in \cite{johnson2023estimating} that if $L_{n,T_n}^{in} = \sum_{k=2}^{n-1} L_{n,T_n}^k$ denotes the sum of the lengths of all internal branches in the coalescent tree, and if the stronger condition \eqref{Introduction: sequence condition super} below holds, then
$$\frac{r_n}{\sqrt{n}} \bigg( L_{n,T_n}^{in} - \frac{n}{r_n} \bigg) \Rightarrow N(0,1),$$
and $L_{n,T_n}^{in}$ and $L_{n,T_n}^1$ are asymptotically independent.  In \cite{johnson2023estimating}, we showed how to use these results to estimate the growth rate of a birth and death process, and applied the results to data on blood cancer.

Here we use the methods developed in \cite{johnson2023estimating} to obtain more precise asymptotics for $L_{n,T_n}^k$ when $k \geq 2$, sharpening the results in \cite{durrett2013population}.  Theorem \ref{Theorem: super LLN} gives a law of large numbers, and Theorem \ref{Theorem: super} establishes the asymptotic normality of $L_{n,T_n}^k$ under a stronger hypothesis.

\begin{theorem}\label{Theorem: super LLN}
Let $k\ge 2$ be a fixed integer. Assume that 
\begin{equation}\label{Introduction: sequence condition super LLN}
    \lim_{n\rightarrow\infty}n e^{-r_n T_n}=0.
\end{equation} 
Then as $n\rightarrow\infty,$ 
$$
\frac{r_n}{n}L_{n,T_n}^k\stackrel{P}{\longrightarrow} \frac{1}{k(k-1)},
$$
where $\stackrel{P}{\longrightarrow}$ denotes convergence in probability as $n \rightarrow \infty$.
\end{theorem}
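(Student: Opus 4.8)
Our plan is to use the coalescent point process representation of the genealogy developed in Section~\ref{Section: Coalescent point process} to express $L^k_{n,T_n}$ as a sum of weakly dependent terms, and then to apply Chebyshev's inequality. Order the sampled leaves $1,\dots,n$ as in the coalescent point process, let $H_1,\dots,H_{n-1}$ be the associated coalescence times, and put $H_0=H_n=+\infty$. For $1\le a\le n-k+1$, there is an internal node whose set of descendant leaves is exactly $\{a,a+1,\dots,a+k-1\}$ if and only if $\min(H_{a-1},H_{a+k-1})>\max(H_a,\dots,H_{a+k-2})$, and when this happens that node lies at depth $\max(H_a,\dots,H_{a+k-2})$, its parent lies at depth $\min(H_{a-1},H_{a+k-1})$, and the branch joining them supports exactly $k$ leaves. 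Consequently
$$
L^k_{n,T_n}=\sum_{a=1}^{n-k+1}\Bigl(\min(H_{a-1},H_{a+k-1})-\max(H_a,\dots,H_{a+k-2})\Bigr)^+ ,
$$
where $x^+=\max(x,0)$.

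The crucial observation is that, because $k\ge 2$, each summand is a \emph{difference} of coalescence times. From the coalescent point process construction, the $H_i$ are, conditionally on $N_{T_n}$ and up to an error that we will show is negligible, i.i.d.\ with a common (data-dependent) location: more precisely $r_n(T_n-H_i)$ is close in distribution to $\log(n/W)+L_i$, where $W$ is the standard exponential random variable arising as the limit of $r_n N_{T_n}e^{-r_nT_n}/\lambda_n$ and the $L_i$ are i.i.d.\ with the standard logistic distribution. The common location cancels in each summand, so that after multiplying through by $r_n$ one is led to compare $r_n L^k_{n,T_n}$ with $\sum_{a=1}^{n-k+1}X_a$, where
$$
X_a:=\Bigl(\min(L_a,\dots,L_{a+k-2})-\max(L_{a-1},L_{a+k-1})\Bigr)^+ ,
$$
the $L_i$ are i.i.d.\ standard logistic, $L_0=L_n=-\infty$, and the two boundary terms ($a=1$ and $a=n-k+1$) contribute only $O(1)$. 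Note in particular that the random quantity $W$ has disappeared.

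Next we compute the mean and variance of the limiting sum. Writing $F$ for the standard logistic distribution function and using independence, an interior index $a$ gives
$$
\E[X_a]=\int_{\R}\mathbb{P}\bigl(\max(L_{a-1},L_{a+k-1})<x<\min(L_a,\dots,L_{a+k-2})\bigr)\,dx=\int_{\R}F(x)^2\bigl(1-F(x)\bigr)^{k-1}\,dx ;
$$
since the logistic density satisfies $F'=F(1-F)$, the substitution $u=F(x)$ turns this into $\int_0^1 u(1-u)^{k-2}\,du=\tfrac{1}{k(k-1)}$, from which $\tfrac{r_n}{n}\E[L^k_{n,T_n}]\to\tfrac{1}{k(k-1)}$ follows once the approximation above is justified. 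For the variance, $X_a$ depends only on the $k+1$ variables $L_{a-1},L_a,\dots,L_{a+k-1}$, so $(X_a)$ has finite-range dependence; since these variables have finite second moment, $\Cov(X_a,X_b)=0$ whenever $|a-b|>k$ and is bounded otherwise, so $\Var\bigl(\sum_a X_a\bigr)=O(n)$ and $\Var\bigl(\tfrac{r_n}{n}L^k_{n,T_n}\bigr)=O(1/n)\to 0$. Combining the two estimates, Chebyshev's inequality yields $\tfrac{r_n}{n}L^k_{n,T_n}\stackrel{P}{\longrightarrow}\tfrac{1}{k(k-1)}$.

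The main work — and the main obstacle — lies in the second paragraph: one must turn the heuristic ``$r_n(T_n-H_i)\approx\log(n/W)+L_i$'' into quantitative estimates and then show that the errors accumulated over the $\asymp n$ summands are negligible, both for the mean (error $o(n/r_n)$) and for the variance (error $o(n^2/r_n^2)$). These errors stem from the conditioning on $N_{T_n}\ge n$, from the fact that the spacings between consecutively sampled individuals in the full-population coalescent point process are not exactly geometric, from the truncation of the coalescence times at $T_n$, and from the two boundary summands. This is precisely where the hypothesis $n e^{-r_n T_n}\to 0$ is used: it forces $N_{T_n}/n\to\infty$ with probability tending to one, which makes the logistic approximation for the coalescence times uniformly accurate over the relevant range. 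The estimates needed are of the same nature as those already established in \cite{johnson2023estimating} for $L^1_{n,T_n}$ and for the total internal length $L^{in}_{n,T_n}$, and the task here is to adapt that machinery so as to keep track of the requirement that a branch support exactly $k$ leaves; the resulting law of large numbers refines Durrett's limit \eqref{durrlimit}.
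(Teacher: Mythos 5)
Your proposal follows essentially the same route as the paper: reduce $L^k_{n,T_n}$ via the coalescent point process to a sum of idealized terms built from i.i.d.\ logistic fluctuations (the common location $\log(n/W)$ cancelling in each difference), compute $\E[X_a]=\tfrac{1}{k(k-1)}$ by the same beta-integral, use finite-range ($(k{+}1)$-dependent) covariance structure to get variance $O(n)$, and conclude by Chebyshev, with the quantitative coupling error deferred to the estimates of \cite{johnson2023estimating} — which is exactly what the paper does (its Lemma \ref{Lemma: corollory to the error bounds, preparation} is quoted from that reference, and the transfer to branch lengths is a short triangle-inequality argument plus control of the two boundary terms, as you indicate). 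The argument is correct and matches the paper's proof in structure and substance.
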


\begin{theorem}\label{Theorem: super}
Let $K\ge 2$ be a fixed integer. Assume that 
\begin{equation}\label{Introduction: sequence condition super}
\lim_{n\rightarrow\infty}n^{3/2} (\log n) e^{-r_n T_n}=0.
\end{equation} 
Then as $n\rightarrow\infty,$ $$\left\{\frac{r_n}{\sqrt{n}}\left(L_{n,T_n}^k-\frac{n}{r_nk(k-1)}\right)\right\}_{k=2}^K \Rightarrow N(0,V),$$
where $V$ is a covariance matrix.  Moreover, if $2 \leq k' \leq k \leq K$, then
\begin{equation}\label{finalcov}
V_{k,k'} = \left\{
\begin{array}{ll}
\frac{2}{(k-1)^2(2k-1)} - \frac{2k+1}{k^2(k-1)^2} - \frac{2}{k-1} + \frac{\pi^2}{3}  - 2 \sum_{j=1}^{k-2} \frac{1}{j^2} & \mbox{ if } k = k' \\
\frac{2k-1}{(k-1)^2} - \frac{\pi^2}{3} + 2 \sum_{j=1}^{k-2} \frac{1}{j^2} & \mbox{ if } k = k' + 1 \\
- \frac{(2k-1)^2}{2k(k-1)(2k-3)} + \frac{\pi^2}{6} - \sum_{j=1}^{k-2} \frac{1}{j^2} & \mbox{ if } k = k' + 2 \\
\frac{2}{(k-1)(k'-1)(k+k'-1)} - \frac{k+k'+1}{k(k-1)k'(k'-1)} - \frac{1}{k'(k-k'-2)} \\
\qquad \qquad+ \frac{1}{(k'-1)(k-k')} + \frac{2}{(k-k'-2)(k-k'-1)(k-k')} \sum_{j=k'}^{k-2} \frac{1}{j} & \mbox{ if }k \geq k'+3.
\end{array}\right.
\end{equation}
\end{theorem}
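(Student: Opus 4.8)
The plan is to follow the method developed in \cite{johnson2023estimating}. First I would use Lambert's coalescent point process construction (Section~\ref{Section: Coalescent point process}) to write $L^k_{n,T_n}$ as an explicit functional of i.i.d.\ variables. Conditionally on $N_{T_n}\ge n$, the coalescence times $H_1,\dots,H_{n-1}$ between consecutive sampled individuals are i.i.d.\ and the genealogical tree is the associated coalescent point process, so its clades are exactly the blocks of consecutive leaves and the branch subtending the block $\{i,\dots,i+k-1\}$ has length $\bigl(\min(H_{i-1},H_{i+k-1})-\max_{i\le j\le i+k-2}H_j\bigr)^+$, with the convention $H_0=H_n=\infty$. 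Hence
$$
L^k_{n,T_n}=\sum_{i=1}^{n-k+1}\Bigl(\min(H_{i-1},H_{i+k-1})-\max_{i\le j\le i+k-2}H_j\Bigr)^+ .
$$
Setting $H_j=T_n-G_j/r_n$, the methods of \cite{johnson2023estimating} give that, under \eqref{Introduction: sequence condition super}, one may replace the $G_j$ by i.i.d.\ random variables $\xi_1,\dots,\xi_{n-1}$ with the standard logistic law $F(x)=(1+e^{-x})^{-1}$, shifted by a single common (random) constant, at the cost of an error whose total contribution to $r_n L^k_{n,T_n}$ is $o_P(\sqrt n)$. Since for $k\ge 2$ the common shift cancels inside each summand, this yields
$$
r_n L^k_{n,T_n}= S^k_n+o_P(\sqrt n),\qquad S^k_n:=\sum_{i=1}^{n-k+1}\bigl(\min(\xi_i,\dots,\xi_{i+k-2})-\max(\xi_{i-1},\xi_{i+k-1})\bigr)^+,
$$
where in the two boundary summands $\xi_0$ and $\xi_n$ are read as $-\infty$; these boundary terms are $O_P(1)$ and hence negligible. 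It thus suffices to prove the stated convergence with $r_n L^k_{n,T_n}$ replaced by $S^k_n$ and $\frac{n}{r_nk(k-1)}$ replaced by $\frac{n}{k(k-1)}$.

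The summands $X^{(k)}_i:=\bigl(\min(\xi_i,\dots,\xi_{i+k-2})-\max(\xi_{i-1},\xi_{i+k-1})\bigr)^+$ of $S^k_n$ are $k$-dependent, and jointly over $2\le k\le K$ the corresponding $\R^{K-1}$-valued summands are $K$-dependent; they have exponentially decaying tails, so a standard $m$-dependent central limit theorem for triangular arrays, together with the Cram\'er--Wold device, reduces the problem to two computations: (i) $\E[S^k_n]=\frac{n}{k(k-1)}+O(1)$, and (ii) $\frac1n\Cov(S^k_n,S^{k'}_n)\to V_{k,k'}$ for $2\le k'\le k\le K$. For (i), translation invariance gives $\E[S^k_n]=(n-k-1)\,\E[X^{(k)}]+O(1)$ with $X^{(k)}=\bigl(\min(\eta_1,\dots,\eta_{k-1})-\max(\eta_0,\eta_k)\bigr)^+$ for i.i.d.\ logistic $\eta_0,\dots,\eta_k$, and the substitution $v_j=F(\eta_j)$ turns this expectation into an elementary integral of rational and logarithmic functions; carrying it out gives $\E[X^{(k)}]=\frac1{k(k-1)}$ (for $k=2$ it reduces to $\int_1^\infty\frac{a^2-1-2a\log a}{a(a-1)^3}\,da=\tfrac12$).

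For (ii), $k$-dependence gives $\frac1n\Cov(S^k_n,S^{k'}_n)\to\sum_{\ell}\Cov\bigl(X^{(k)}_0,X^{(k')}_\ell\bigr)$, a finite sum over the shifts $\ell$ for which the index windows $\{-1,\dots,k-1\}$ and $\{\ell-1,\dots,\ell+k'-1\}$ overlap. Each such covariance is the expectation of a product of two functions of at most $k+k'$ i.i.d.\ logistic variables; after the substitution $v_j=F(\xi_j)$ it becomes an iterated integral over a simplex of a rational function times logarithms, whose evaluation produces harmonic numbers $\sum_j 1/j$ and, from the quadratic logarithmic pieces, the values $\pi^2/6$, $\pi^2/3$ and the partial sums $\sum_{j=1}^{k-2}1/j^2$. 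The four cases in \eqref{finalcov} record which overlap configurations occur: when $k\ge k'+3$ every contributing shift yields the same ``generic'' overlap pattern, giving the single formula containing $\sum_{j=k'}^{k-2}1/j$; when $k-k'\in\{0,1,2\}$ additional coincidences among the arguments of the $\min$'s and $\max$'s of the two summands create the extra $\pi^2$-type terms and alter the rational part. Summing the finitely many integrals gives $V_{k,k'}$; the construction is symmetric in $k,k'$, so $V$ is symmetric and is the covariance matrix of the limiting Gaussian, completing the proof.

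The main obstacle is step (ii): although each individual integral is elementary, there are many overlap configurations to enumerate correctly --- tracking which indices are shared between the two summands, which shared variables sit inside a $\min$ versus a $\max$, and the resulting regions of integration --- and then assembling the numerous partial-fraction and dilogarithm contributions into the compact closed form \eqref{finalcov} with its case distinctions. A secondary technical point, already handled in \cite{johnson2023estimating} for $L^1_{n,T_n}$ and $L^{in}_{n,T_n}$ and needing only minor adaptation here, is the reduction in the first paragraph: one must bound, uniformly, the discrepancy between the true conditioned coalescence times and their logistic approximation --- in particular on the rare events where some $H_i$ is atypically small --- and check that the hypothesis $n^{3/2}(\log n)e^{-r_nT_n}\to0$ suffices to make the accumulated approximation error $o_P(\sqrt n)$.
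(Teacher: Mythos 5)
Your proposal is correct and follows essentially the same route as the paper: the CPP representation with coalescence times measured forward from time zero, the logistic-plus-common-shift approximation whose accumulated error is $o_P(\sqrt n/r_n)$ under \eqref{Introduction: sequence condition super} (the paper's Lemmas on the error bounds, imported from \cite{johnson2023estimating}), an $m$-dependent CLT with the Cram\'er--Wold device, and the covariance $V_{k,k'}$ obtained by enumerating the finitely many overlap configurations and evaluating elementary logistic integrals (the paper's eight cases and appendix). The only points the paper treats explicitly that your sketch leaves implicit are the verification of the hypotheses of its $m$-dependent CLT --- in particular a positive lower bound on the limiting variance of the linear combinations, proved there by a conditioning argument on the events $\mathcal{A}_{3K}$ --- and the actual evaluation of the overlap integrals leading to \eqref{finalcov}, which you correctly identify as the main computational burden.
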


\begin{remark}\label{Remark: sequence condition super}
Since the expected population size is $e^{r_n T_n}$, equation \eqref{Introduction: sequence condition super LLN} requires the sample size to be much smaller than the population size.  Equation \eqref{Introduction: sequence condition super} is a somewhat stronger condition.
Note that equation \eqref{Introduction: sequence condition super LLN} implies
$$\lim_{n\rightarrow\infty} r_n T_n =\infty.$$
\end{remark}

\begin{remark}
Although we have proved Theorems \ref{Theorem: super LLN} and \ref{Theorem: super} when the birth and death rates do not change over time, the results may hold in greater generality.  Cheek \cite{cheek2022coalescent} showed that the asymptotic formula for the coalescence times as $T \rightarrow \infty$ is robust in the sense that, even if the birth rate slows down as the population size approaches a carrying capacity, we obtain the same limit if we take the sampling time and the carrying capacity to infinity appropriately. The idea is that because the genealogical tree is star-shaped, what should matter for the results is that the population is growing exponentially during the time period corresponding to when most of the mergers of the ancestral lines are taking place.
\end{remark}

\subsection{Organization of the paper}
In Section \ref{Section: Coalescent point process}, we introduce the coalescent point process, which will be the main tool that we use to analyze the branch lengths. We will give a detailed proof of our main result in the critical case in Sections~\ref{Section: approximation of H_i,n,T}, \ref{Section: central limit theorem for an approximation of L^k_i,n,T/N_T} and \ref{Section: error bounds}.  In Section \ref{Section: approximation of H_i,n,T}, we will derive an approximation for the branch lengths.  In Section \ref{Section: central limit theorem for an approximation of L^k_i,n,T/N_T}, we prove a central limit theorem using this approximation. We bound the error from the approximation in Section \ref{Section: error bounds}. The proofs of Theorems \ref{Theorem: super LLN} and Theorem \ref{Theorem: super} in the supercritical case are analogous and can be found in Section \ref{Section: proof super}.  In the supercritical case, the necessary approximations and error bounds were established in the supplemental information in \cite{johnson2023estimating}.

\section{Coalescent point process}\label{Section: Coalescent point process}
\subsection{Introduction to the coalescent point process}

The main tool that we apply to study the site frequency spectrum is the coalescent point process (CPP). A CPP is a point process which can be used to represent the genealogical tree in certain population models, including birth and death processes.  The CPP can be constructed from a sequence of i.i.d. random variables $H_1, H_2,\dots$, which represent coalescence times.  The genealogical tree is then obtained as follows.  We first draw a vertical line of height $T$.  We then draw vertical lines of heights $H_1,H_2,\dots$, and at the top of each of these vertical lines, we draw a horizontal line to the left, stopping when it hits a vertical branch.  We continue this process until one of the random variables $H_i$ is larger than $T$, and then we stop, which means that if $H_i < T$ for $i = 1, 2, \dots, n-1$ and $H_n > T$, then the population will consist of $n$ individuals labelled $0, 1, \dots, n-1$ (see Figure \ref{Figure: p=1}).  In particular, for populations whose genealogy can be represented in this way, the population size at time $T$ must have a geometric distribution.

\begin{figure}[h]
\centering
\begin{tikzpicture}[scale=0.75]
\draw(0,0)--(0,7);
\draw(0,0)--(18,0);

\node at (0,-0.5){0};
\node at (2,-0.5){1};
\node at (4,-0.5){2};
\node at (6,-0.5){3};
\node at (8,-0.5){4};
\node at (10,-0.5){5};
\node at (12,-0.5){6};
\node at (14,-0.5){7};
\node at (16,-0.5){8};
\node at (-0.5,6.5){$T$};
\fill (0,6.5) circle(1.5pt);

\draw[very thick](0,0)--(0,6.5);
\draw[very thick](2,0)--(2,3.5);
\draw[very thick](2,3.5)--(2,5.5);
\node at (2.5,5.5){$H_1$};
\draw[very thick](4,0)--(4,3.5);
\node at (4.5,3.5){$H_2$};
\draw[very thick](6,0)--(6,1.5);
\draw[very thick](6,1.5)--(6,4.5);
\node at (6.5,4.5){$H_3$};
\draw[very thick](8,0)--(8,1.5);
\node at (8.5,1.5){$H_4$};
\draw[very thick](10,0)--(10,3);
\draw[very thick](10,3)--(10,6);
\node at (10.5,6){$H_5$};
\draw[very thick](12,0)--(12,1);
\draw[very thick](12,1)--(12,3);
\node at (12.5,3){$H_6$};
\draw[very thick](14,0)--(14,1);
\node at (14.5,1){$H_7$};
\draw[very thick](16,0)--(16,4);
\node at (16.5,4){$H_8$};

\draw[dashed] (0,5.5)--(2,5.5);
\draw[dashed] (2,3.5)--(4,3.5);
\draw[dashed] (2,4.5)--(6,4.5);
\draw[dashed] (6,1.5)--(8,1.5);
\draw[dashed] (0,6)--(10,6);
\draw[dashed] (10,3)--(12,3);
\draw[dashed] (12,1)--(14,1);
\draw[dashed] (10,4)--(16,4);
\end{tikzpicture}
\caption{The genealogical tree with coalescence times $H_1,H_2,\dots,H_8$, ($H_9>T$).} \label{Figure: p=1}
\end{figure}
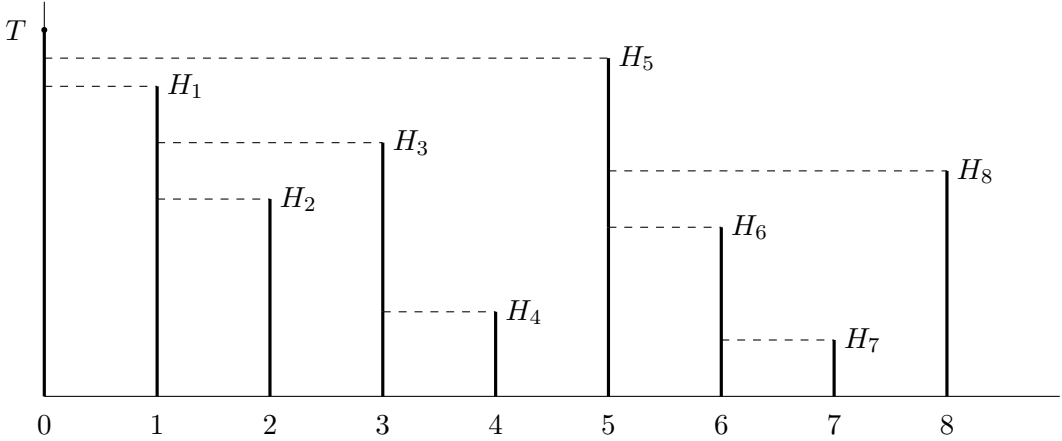

The coalescent point process was introduced by Popovic in \cite{popovic2004asymptotic} to study the critical birth and death tree conditioned to have $n$ leaves at time $T$.  Using the same technique, Aldous and Popovic \cite{aldous2005critical} studied other quantities such as the time to the most common recent ancestor. Asymptotic formulas for the coalescence times when $n$ and $T$ are large were also obtained in \cite{popovic2004asymptotic} using the excursion theory for Brownian motion.  In this limit, the population size becomes infinite, so the CPP is represented not by a sequence of i.i.d. random variables $H_i$ but by the points of a Poisson point process.  The corresponding CPP is known as the Brownian CPP and represents the scaling limit of a critical branching process conditioned to survive for a long time. The Brownian CPP also arises as the scaling limit of the genealogy of a sample from a population with constant size, conditional on the entire sample belonging to a small subpopulation \cite{lambert2019recovering}.

\subsection{Sampling from the coalescent point process}

Since one does not usually have data on the entire population, there has been extensive study of how to describe the genealogy of a sample. Two ways of sampling have been studied in depth: sampling each individual independently with probability $p$ (Bernoulli sampling) or taking a sample of size $n$, uniformly from the population (uniform sampling). Stadler \cite{stadler2009incomplete}, considered a Bernoulli sample from a birth and death process, conditional on sampling $n$ individuals. She gave an explicit formula for the joint density of the coalescent times when a uniform prior is assumed for the time from the origin. In \cite{lambert2013birth}, Lambert and Stadler simplified the formula in the Bernoulli sampling scheme and also considered a more general setting where the death rate could be age-dependent. They made the following observations:
\begin{enumerate}
\item The number of unsampled individuals between sampled individuals has a geometric distribution with success probability $p$.
\item If $i < j$ and individuals $i$ and $j$ are sampled but individuals $i+1, \dots, j-1$ are not, then the coalescence time between individual $i$ and $j$ is $\max\{H_{i+1},\dots,H_j\}$.
\end{enumerate}
Using these two observations, they showed that a Bernoulli sample of a CPP is another CPP in which the random variables $H_i$ representing the coalescence times have the same distribution as the maximum of a geometric($p$) number of the corresponding random variables in the original CPP.  The resulting formula for the distribution of coalescence times can be found in Proposition 5 of \cite{lambert2013birth}. 
In the uniform sampling scheme, Stadler found the expectation of the $k$th coalescence time in a birth and death process in \cite{stadler2008lineages}.
Then Lambert \cite{lambert2018coalescent} showed that a uniform sample of size $n$ can be realized as a mixture of Bernoulli sampling, conditional on sampling $n$ individuals. Based on this representation, he obtained an explicit formula for the joint density of the coalescence times for birth and death process.  We now present this formula and explain how it is derived in \cite{lambert2018coalescent}.


We start by considering the genealogy of the whole population. The density of the coalescence times for a birth and death process with birth rate $\lambda$ and death rate $\mu$, conditioned to survive until time $T$, is given by equations (4) and (5) in \cite{lambert2018coalescent}:
\begin{equation}\label{Hdensity}
f_{H}(t)=\left\{\begin{aligned}
&\frac{\lambda r^2 e^{-rt}}{(\lambda-\mu e^{-rt})^2}\mathbbm{1}_{\{t> 0\}}\qquad&\mbox{if } \lambda\neq \mu,\\
&\frac{\lambda}{(1+\lambda t)^2}\mathbbm{1}_{\{t> 0\}} \qquad&\mbox{if } \lambda=\mu.
\end{aligned}
\right.
\end{equation}
If instead we sample each individual independently with probability $y$, then the new density of coalescence times is given in equation (7) in \cite{lambert2018coalescent} to be
\begin{equation*}
f_{H_y}(t)=\left\{
\begin{aligned}
&\frac{y\lambda r^2 e^{-rt}}{(y\lambda+(r-y\lambda)e^{-rt})^2}\mathbbm{1}_{\{t> 0\}}\qquad&\mbox{if }\lambda\neq\mu,\\
&\frac{y\lambda}{(1+y\lambda t)^2}\mathbbm{1}_{\{t> 0\}}\qquad&\mbox{if }\lambda=\mu.\\
\end{aligned}
\right.
\end{equation*}
This density is calculated by finding the density of the maximum of a geometrically distributed number of random variables whose density is given by \eqref{Hdensity}.

To construct the genealogical tree of a uniform sample of size $n$, we can use the following two-step process.
First, one can choose the sampling probability $y\in(0,1)$ from the density 
$$
f_{Y_{n,T}}(y)=\frac{n\delta_T y^{n-1}}{(y+\delta_T-y\delta_T)^{n+1}} \mathbbm{1}_{\{0 < y < 1\}}.
$$ 
This density appears as equation (12) in \cite{lambert2018coalescent}; note that $\delta_T$ here is $1-a$ in \cite{lambert2018coalescent}. Then conditional on $Y_{n,T} = y$, we choose the coalescence times $H_{i,n,T}$ for $i = 1, 2, \dots, n-1$ to have the same distribution as the conditional distribution of $H_y$ given $H_y \leq T$, which has the effect of conditioning the sample size to equal $n$.  We get
\begin{equation*}
f_{H_{i,n,T}|Y_{n,T}=y}(t)=\left\{
\begin{aligned}
&\frac{y\lambda+(r-y\lambda)e^{-rT}}{y\lambda(1-e^{-rT})} \cdot \frac{y\lambda r^2 e^{-rt}}{(y\lambda+(r-y\lambda)e^{-rt})^2}\mathbbm{1}_{\{0< t<T\}},\qquad&\mbox{if }\lambda\neq\mu,\\
&\frac{1+y\lambda T}{y\lambda T} \cdot \frac{y\lambda}{(1+y\lambda t)^2}\mathbbm{1}_{\{0<t< T\}},\qquad&\mbox{if }\lambda=\mu.\\
\end{aligned}
\right.
\end{equation*}
The joint density of the coalescence times is therefore
$$
f_{H_{1,n,T},\dots, H_{n-1,n,T}}(t_1\dots,t_{n-1}) = \int_0^1 \left(\prod_{i=1}^{n-1} f_{H_{i,n,T}|Y_{n,T}=y} (t_i)\right) f_{Y_{n,T}}(y)\ dy.
$$
The same joint density of the coalescence times appeared in Proposition 19 of \cite{harris2020coalescent}.

Suppose $Y_{n,T} = y$, so that we are sampling each individual with probability $y$ in the second step of this construction.  To perform the sampling, we can examine the individuals labelled $0, 1, 2, \dots$ in sequence, selecting each one with probability $y$.  After $i$ sampled individuals have been found, we let $X_{i,n,T}$ denote the number of individuals that we need to examine to find the next sampled individual.  We may instead reach the end of the population, in which case if there are $k$ individuals in the population after the $i$th sampled one, then we set $X_{i,n,T} = k+1$. This means that when we condition the number of sampled individuals to equal $n$, for $i = 1, \dots, n$, the label of the $i$th sampled individual is $(\sum_{j=0}^{i-1} X_{j,n,T}) - 1$, and the population size is
\begin{equation}\label{Sampling from the coalescent point process: population size}    
N_T=\sum_{i=0}^{n} X_{i,n,T}-1.
\end{equation}
Conditional on $X_{i,n,T}\ge m$, the event $X_{i,n,T}\ge m+1$ occurs if and only if the next coalescence time is less than $T$ (which means that we do not reach the end of the population) and the next individual is not sampled. Therefore,
\begin{equation}\label{Sampling from the coalescent point process: X_i,n,T p.m.f}
\mathbb{P}(X_{i,n,T}\ge m+1|X_{i,n,T}\ge m) = (1-\delta_T)(1-y), 
\end{equation}
where
\begin{equation}\label{Sampling from the coalescent point process: definition of delta_T}
\delta_T=\mathbb{P}(H\ge T)=\left\{
\begin{aligned}
&\frac{re^{-rT}}{\lambda(1-e^{-rT})+re^{-rT}}\qquad&\lambda\neq\mu,\\
&\frac{1}{1+\lambda T}\qquad &\lambda=\mu.
\end{aligned}
\right.
\end{equation}
That is, the random variables $X_{i,n,T}$ are independent geometric random variables with probability mass function $$\mathbb{P}(X_{i,n,T}=k)=(1-\delta_T)^{k-1}(1-y)^{k-1}(1-(1-\delta_T)(1-y))\qquad\text{for } k=1,2,\dots.$$

\subsection{Recovering the branch lengths from the coalescent point process}\label{Subsection: recoveing the branch length from coalescent point process}

Given the coalescence times $\{H_{i,n,T}\}_{i=1}^{n-1}$ of a CPP, one can construct the genealogy as in Figure~\ref{Figure: recovering the branch length}. The total length $L_{n,T}^k$ of the branches supporting $k$ leaves is a function of these coalescence times. To be more precise, under the convention that the maximum over an empty set is 0, the length of the portion of the 0th branch that supports $k$ leaves is (see Figure \ref{Figure: recovering the branch length} for an illustration)
\begin{equation}\label{Recovering the branch lenght from coalescent point process: definition of L^k_0,n,T}
\begin{split}
L^{k}_{0,n,T}&=\left(H_{k,n,T}-\max_{1\le i\le k-1}H_{i,n,T}\right)^{+},
\end{split}
\end{equation}
the length of the portion of the $i$th branch that supports $k$ leaves is
\begin{equation}\label{Recovering the branch lenght from coalescent point process: definition of L^k_i,n,T}
L^{k}_{i,n,T}=\left(H_{i,n,T}\wedge H_{i+k,n,T}-\max_{i+1\le j\le i+k-1} H_{j,n,T}\right)^{+}\qquad \text{ for }1\le i\le n-k-1,
\end{equation}
the length of the portion of the $(n-k)$th branch that supports $k$ leaves is 
\begin{equation}\label{Recovering the branch lenght from coalescent point process: definition of L^k_n-k,n,T}
L^{k}_{n-k,n,T}=\left(H_{n-k,n,T}-\max_{n-k+1\le j\le n-1} H_{j,n,T}\right)^{+},
\end{equation}
and there is no portion of the $i$th branch that supports $k$ leaves for $n-k+1\le i\le n-1$. Therefore, the total length of branches supporting $k$ leaves is 
\begin{equation}\label{Recovering the branch length from the coalescent point process: definition of L^k_n,T}
L^{k}_{n,T}=\sum_{i=0}^{n-k}L^{k}_{i,n,T}. 
\end{equation}
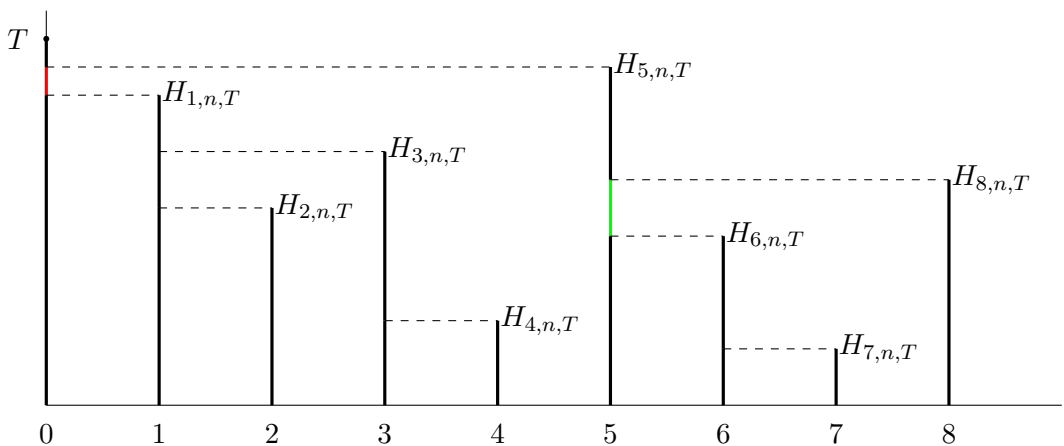
\begin{figure}[ht]
\centering
\begin{tikzpicture}[scale=0.75]
\draw(0,0)--(0,7);
\draw(0,0)--(18,0);

\node at (0,-0.5){0};
\node at (2,-0.5){1};
\node at (4,-0.5){2};
\node at (6,-0.5){3};
\node at (8,-0.5){4};
\node at (10,-0.5){5};
\node at (12,-0.5){6};
\node at (14,-0.5){7};
\node at (16,-0.5){8};
\node at (-0.5,6.5){$T$};
\fill (0,6.5) circle(1.5pt);

\draw[very thick](0,0)--(0,5.5);
\draw[very thick, red](0,5.5)--(0,6);
\draw[very thick](0,6)--(0,6.5);
\draw[very thick](2,0)--(2,3.5);
\draw[very thick](2,3.5)--(2,5.5);
\node at (2.75,5.5){$H_{1,n,T}$};
\draw[very thick](4,0)--(4,3.5);
\node at (4.75,3.5){$H_{2,n,T}$};
\draw[very thick](6,0)--(6,1.5);
\draw[very thick](6,1.5)--(6,4.5);
\node at (6.75,4.5){$H_{3,n,T}$};
\draw[very thick](8,0)--(8,1.5);
\node at (8.75,1.5){$H_{4,n,T}$};
\draw[very thick](10,0)--(10,3);
\draw[very thick, green](10,3)--(10,4);
\draw[very thick](10,4)--(10,6);
\node at (10.75,6){$H_{5,n,T}$};
\draw[very thick](12,0)--(12,1);
\draw[very thick](12,1)--(12,3);
\node at (12.75,3){$H_{6,n,T}$};
\draw[very thick](14,0)--(14,1);
\node at (14.75,1){$H_{7,n,T}$};
\draw[very thick](16,0)--(16,4);
\node at (16.75,4){$H_{8,n,T}$};

\draw[dashed] (0,5.5)--(2,5.5);
\draw[dashed] (2,3.5)--(4,3.5);
\draw[dashed] (2,4.5)--(6,4.5);
\draw[dashed] (6,1.5)--(8,1.5);
\draw[dashed] (0,6)--(10,6);
\draw[dashed] (10,3)--(12,3);
\draw[dashed] (12,1)--(14,1);
\draw[dashed] (10,4)--(16,4);
\end{tikzpicture}
\caption{The genealogical tree with $H_{1,n,T},\dots,H_{8,n,T}$. The red segment is the portion of the 0th branch supporting 5 leaves. The green segment is the portion of the 5th branch supporting 3 leaves.} 
\label{Figure: recovering the branch length}
\end{figure}

This representation allows one to study the site frequency spectrum. Lambert \cite{lambert2008allelic} obtained the expected site frequency spectrum in a Bernoulli $y$-sample from a general CPP, conditional to have size $n$. In the special case of supercritical birth and death process, Dinh et. al. \cite{dinh2020statistical} obtained an explicit formula. Ignatieva, Hein, and Jenkins \cite{ignatieva2020characterisation} took the limit as $y\rightarrow0$ in this formula and obtained the same $\frac{1}{k(k-1)}$ shape for the expected site frequency spectrum. One can also apply similar ideas to use the CPP to study the closely related allele frequency spectrum, in which one partitions the sampled individuals into blocks consisting of individuals that acquired exactly the same mutations and counts the number of blocks of size $k$.  Champagnat and Lambert studied the allele frequency spectrum in a birth and death process with age dependent death rates. They derived the expected allele frequency spectrum for the genealogy of the entire population in \cite{champagnat2012splitting}, and they studied the size of the largest family and age of the oldest family in \cite{champagnat2013splitting}. Champagnat and Henry \cite{champagnat2016moments} studied the factorial moments for the allele frequency spectrum.  

The site frequency spectrum is closely linked to the branch lengths $L_{n,T}^k$ when mutations occur at a constant rate along the branches.  However, one can also consider a model in which mutations can appear only when a new individual is born. Richard \cite{richard2014splitting} considered a birth and death process with age-dependent death rates and mutations at birth times. He used the CPP to give the expected allele frequency spectrum and prove almost sure convergence for the allele frequency spectrum under proper scaling. In \cite{delaporte2016mutational}, Delaporte, Achaz, and Lambert considered a critical birth and death process with mutations at birth times. They sampled each individual independently with probability $p$, and conditioned the sample to have size $n$. They gave the expected site frequency spectrum both when the sampling time $T$ is fixed, and when this time is unknown and is given an improper prior distribution.

\section{Approximation of $H_{i,n,T}$}\label{Section: approximation of H_i,n,T}

In this section, we consider the critical case with birth and death rates equal to 1. We derive an approximation of $H_{i,n,T}$ when $n$ and $T$ are large. The corresponding approximation of $H_{i,n,T}$ in the supercritical case, which we review in Section \ref{Section: proof super}, is based on the same idea and was derived in the supplemental information in \cite{johnson2023estimating}.

\subsection{Approximation of $H_{i,n,T}$ for large $T$}\label{Subsection: approximation of H_i,n,T for large T}
As in Section 1.1 of the supplemental information in \cite{johnson2023estimating}, we define $\delta_T$ as in \eqref{Sampling from the coalescent point process: definition of delta_T} and then write $Y_{n,T}=\delta_T Q_{n,T}$ where $Q_{n,T}$ has density
\begin{equation}\label{Approximation of H_i,n,T for large T: f_Q_n,T}
f_{Q_{n,T}}(q)=\frac{n q^{n-1}}{(1+q-q\delta_T)^{n+1}},\qquad q\in(0,1/\delta_T).
\end{equation}
Replacing $y$ by $q\delta_T$, the conditional density for $H_{i,n,T}$  given $Q_{n,T}=q$ is
$$
f_{H_{i,n,T}|Q_{n,T}=q}(t)=\frac{1+q\delta_T T}{T}\frac{1}{(1+q\delta_Tt)^2}, \qquad t\in(0,T).
$$
By writing $U_{i,n,T}=Q_{n,T}H_{i,n,T}/T$, the conditional density for $U_{i,n,T}$ given $Q_{n,T}=q$ is
\begin{equation}\label{Approximation of H_i,n,T for large T: f_U_i,n,T|Q_n,T=q}
f_{U_{i,n,T}|Q_{n,T}=q}(u)=\frac{1+q\delta_T T}{q}\frac{1}{(1+\delta_T Tu)^2}, \qquad u\in(0,q).
\end{equation}
Therefore, to obtain $\{H_{i,n,T}\}_{i=1}^{n-1}$, we can:
\begin{enumerate}
\item Choose $Q_{n,T}$ from the density $f_{Q_{n,T}}$ in \eqref{Approximation of H_i,n,T for large T: f_Q_n,T}.
\item Given $Q_{n,T}=q,$ choose $U_{i,n,T}$ for $i=1,\dots, n-1$ independently from the conditional density $f_{U_{i,n,T}|Q_{n,T}=q}$ in \eqref{Approximation of H_i,n,T for large T: f_U_i,n,T|Q_n,T=q}.
\item Let $H_{i, n,T}=TU_{i,n,T}/Q_{n,T}$ for $i=1,\dots n-1$.
\end{enumerate} 

As $T\rightarrow\infty$, we have $\delta_T\rightarrow0$ and therefore the density $f_{Q_{n,T}}$ converges pointwise to
\begin{equation}\label{Approximation of H_i,n,T for large T: f_Q_n}
f_{Q_{n}}(q)=\frac{n q^{n-1}}{(1+q)^{n+1}},\qquad q\in(0,\infty).   
\end{equation}
As $T\rightarrow\infty$ we have $\delta_T T\rightarrow1$, and therefore the conditional density $f_{U_{i,n,T}|Q_{n,T}=q}$ converges pointwise to
\begin{equation}\label{Approximation of H_i,n,T for large T: f_U_i,n|Q_n=q}
f_{U_{i,n}|Q_{n}=q}(u)=\frac{1+q}{q}\frac{1}{(1+u)^2}, \qquad u\in(0,q).
\end{equation}
Therefore, we can obtain an approximation $H_{i,n}$ to $H_{i,n,T}$ which is valid for large $T$ in the following way:
\begin{enumerate}
\item Choose $Q_{n}$ from the density $f_{Q_{n}}$ in \eqref{Approximation of H_i,n,T for large T: f_Q_n}.
\item Given $Q_{n}=q,$ choose $U_{i,n}$  for $i=1,\dots,n-1$ independently from the conditional density $f_{{U_{i,n}}|Q_{n}=q}$ in \eqref{Approximation of H_i,n,T for large T: f_U_i,n|Q_n=q}.
\item Let ${H}_{i,n}=TU_{i,n}/Q_{n}$ for $i=1,\dots, n-1$.
\end{enumerate}

\begin{remark}
The resulting joint density for the coalescence times also arises as the scaling limit of coalescence times in more general settings. Harris, Johnston and Roberts \cite{harris2020coalescent} obtained this joint density in the limit as $T \rightarrow \infty$ for critical Galton-Watson processes with a general offspring distribution having finite variance; see the formula for $f_k(s_1,\dots, s_{k-1})$ in Section 2.3 of \cite{harris2020coalescent}.
\end{remark}

\subsection{Approximation of $H_{i,n,T}$ for large $n$ and $T$}	
\label{Subsection: approximation of H i,n,T for large n and T}

As in Section 1.1 of the supplemental information to \cite{johnson2023estimating}, we scale $Q_{n}$ by $n$. More precisely, the density for  $Q_{n}/n$ is
$$
f_{Q_n/n}(q)=\frac{q^{n-1}}{\left(\frac{1}{n}+q\right)^{n+1}},\qquad q\in(0,\infty).
$$
Replacing $q$ by $nq$, the conditional density of $U_{i,n}$ given $Q_n/n=q$ is
\begin{equation}\label{Approximation of H_i,n,T for large n and T: f_U_i,n|Q_n/n=q}
f_{{U_{i,n}}|Q_n/n=q}(u)=\frac{1+nq}{nq}\frac{1}{(1+u)^2},\qquad q\in(0,nq).
\end{equation}

As $n\rightarrow\infty$, the density $f_{Q_n/n}(q)$ converges pointwise to
$$
f_{1/W}(q)=\lim _{n \rightarrow \infty} \frac{q^{n-1}}{\left(\frac{1}{n}+q\right)^{n+1}}=\frac{e^{-1 / q}}{q^{2}}, \qquad q\in(0,\infty),
$$
which is the density function for $1 / W$ where $W$ has an exponential distribution with mean 1. 
For each fixed $q>0$, as $n\rightarrow\infty$, the conditional density $f_{U_{i,n}|Q_n/n=q}$ converges pointwise to 
\begin{equation}\label{Approximation of H_i,n,T for large n and T: f_U_i}
 f_{U_i}(u)=\frac{1}{(1+u)^2},\qquad u\in(0,\infty). 
\end{equation}
Therefore, we can obtain an approximation $H_i$ to $H_{i,n}$ which is valid for large $n$ in the following way:
\begin{enumerate}
\item Choose $W$ from the exponential density
$$f_{W}(q)=e^{-q},\qquad q\in(0,\infty).$$
\item Choose $U_i$ for $i=1,\dots,n-1$ independently from the density $f_{U_i}$ in \eqref{Approximation of H_i,n,T for large n and T: f_U_i}.
\item Let ${H}_{i}=TWU_i/n$ for $i=1,\dots, n-1$.
\end{enumerate}

\begin{remark}
To understand the scaling in the formula for $H_i$, note that near time $T$, the size of the population is approximately $TW$. Also, we expect the rate at which two lineages merge to be approximately the reciprocal of the population size.  Therefore, as long as the number of lineages is comparable to $n$, the amount of time that it takes before a given lineage merges with one of the other lineages should be on the scale of $TW/n$, as we see in the formula for $H_i$.
\end{remark}


\section{Central limit theorem for an approximation of $L^{k}_{i,n,T}/N_T$}\label{Section: central limit theorem for an approximation of L^k_i,n,T/N_T}

Throughout the rest of the paper, $C$ will be some constant depending only on $K$ which may vary from line to line. For real numbers $a_n, b_n, c_n$ and random variables $X_n, Y_n, Z_n$, we use $o(\cdot)$, $O(\cdot)$, $o_p(\cdot)$, $O_p(\cdot)$ defined below:
\begin{equation*}
\begin{split}
a_n = o(b_n)  &\iff  a_n=b_nc_n,\  \lim_{n\rightarrow\infty}c_n =0,\\
a_n = O(b_n)  &\iff  a_n=b_nc_n,\  \sup|c_n| < \infty,\\
X_n = o_p(Y_n)  &\iff  X_n=Y_nZ_n,\  Z_n\text{\ converges to 0 in probability},\\
X_n = O_p(Y_n)  &\iff  X_n=Y_nZ_n,\  \{Z_n\} \text{ is tight}.
\end{split}
\end{equation*}
Similarly, for quantities indexed by $n$ and $T$, we write
\begin{equation*}
\begin{split}
a_{n,T} = O(b_{n,T})  &\iff  a_{n,T}=b_{n,T}c_{n,T},\  \sup|c_{n,T}| < \infty,\\
X_{n,T} = O_p(Y_{n,T})  &\iff  X_{n,T}=Y_{n,T}Z_{n,T},\  \{Z_{n,T}\} \text{ is tight}.
\end{split}
\end{equation*}

As we saw in Section \ref{Subsection: the critical case}, if we scale the branch lengths by the population size, we expect the genealogy to be similar to Kingman's coalescent. Therefore, we consider the rescaled branch lengths $L^{k}_{i,n,T}/N_T$. Recall from Section \ref{Subsection: recoveing the branch length from coalescent point process} that the branch lengths $L^{k}_{i,n,T}$ can be recovered from the coalescent times $H_{i,n,T}$. We use the approximations in Section \ref{Section: approximation of H_i,n,T} to obtain approximations of $L^{k}_{i,n,T}/N_T$ to be defined in the next paragraph. For the approximations, a truncation is needed to ensure that the second moment is finite. In the study of the external branch lengths of Kingman's coalescent, Janson and Kersting \cite{janson2010external} truncated the branches when there are $O(\sqrt{n})$ remaining lineages, so the time is of order $O(1/\sqrt{n})$. We use a similar approach and truncate the times $U_i/n$ at $(\log\log n)/\sqrt{n}$.

Recall from the definition of $L^k_{i,n,T}$ in \eqref{Recovering the branch lenght from coalescent point process: definition of L^k_i,n,T} and the construction of $H_{i,n,T}$ in Section \ref{Subsection: approximation of H_i,n,T for large T} that the rescaled branch length $L^k_{i,n,T}/N_T$ is
\begin{equation}\label{Central limit theorem for an approximation: formula  L^k_i,n,T/N_T of L^k_i,n,T/N_T}
\frac{L^k_{i,n,T}}{N_T}=\frac{T}{Q_{n,T}N_T}\left(U_{i,n,T}\wedge U_{i+k,n,T}-U_{i+1,n,T}\vee\dots\vee U_{i+k-1,n,T}\right)^+,\quad 1\le i\le n-k-1.   
\end{equation}
We define its approximation with truncation when $n$ and $T$ are large
\begin{equation}\label{Central limit theorem for an approximation: definition of widetilde L^k_i}
\widetilde{L}^k_{i}=\frac{1}{n}(U_{i}\wedge U_{i+k}\wedge \sqrt{n}\log\log n-U_{i+1}\vee \dots\vee U_{i+k-1})^+,\quad 1\le i\le n-k-1,
\end{equation}
when $T$ is large
\begin{equation}\label{Central limit theorem for an approximation: definition of widetilde L^k_i,n}
\widetilde{L}^{k}_{i,n}=\frac{1}{n}(U_{i,n}\wedge U_{i+k,n}\wedge\sqrt{n}\log\log n-U_{i+1,n}\vee \dots\vee U_{i+k-1,n})^+,\quad 1\le i\le n-k-1,  
\end{equation}
and for fixed $n$ and $T$
\begin{equation}\label{Central limit theorem for an approximation: definition of widetilde L^k_i,n,T}
\widetilde{L}^{k}_{i,n,T}=\frac{1}{n}(U_{i,n,T}\wedge U_{i+k,n,T}\wedge\sqrt{n}\log\log n-U_{i+1,n,T}\vee \dots\vee U_{i+k-1,n,T})^+,\quad 1\le i\le n-k-1.
\end{equation}
\begin{remark}
Note that
\begin{equation*}
\begin{split}
n\widetilde{L}^k_{i}&=(U_{i}\wedge U_{i+k}\wedge \sqrt{n}\log\log n-U_{i+1}\vee\dots\vee U_{i+k-1})^+\\
&=((U_{i}\wedge \sqrt{n}\log\log n)\wedge (U_{i+k}\wedge \sqrt{n}\log\log n)\\
&\qquad \qquad-(U_{i+1}\wedge \sqrt{n}\log\log n)\vee\dots\vee (U_{i+k-1}\wedge \sqrt{n}\log\log n))^+,
\end{split}
\end{equation*}
so we have actually truncated all the random variables $U_i$, and likewise for $U_{i,n}$ and $U_{i,n,T}$.
\end{remark}

We compute the moments involving the random variables $U_i$ in Section \ref{Subsection: moment computation} and then prove a  central limit theorem involving the $\widetilde{L}^{k}_i$ in Section \ref{Subsection: central limit theorem for an approximated version}.

\subsection{Moment computations}\label{Subsection: moment computation}
\begin{lemma}\label{Lemma: moment computation}
Let $\widetilde{L}^{k}_{i}$ be defined as in \eqref{Central limit theorem for an approximation: definition of widetilde L^k_i}. Then
\begin{align*}
\E\left[n\widetilde{L}^k_{i}\right]&=\frac{1}{k}+O\left(\frac{1}{\sqrt{n}\log\log n}\right), \\
\Var\left(n\widetilde{L}^k_{i}\right)&=\log n +2\log\log\log n+O(1), \\
\Cov(n\widetilde{L}^k_{i},n\widetilde{L}^{k'}_{i'})&=O(1),\qquad\textup{ for } (i,k)\neq(i',k').
\end{align*}
\end{lemma}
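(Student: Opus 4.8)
\emph{Proof strategy.}\ Write $b_n=\sqrt n\,\log\log n$ for the truncation level, and recall from the remark above that $n\widetilde L^k_i=(A\wedge b_n-B)^+$, where $A=U_i\wedge U_{i+k}$ is the minimum of the two ``endpoint'' variables and $B=U_{i+1}\vee\dots\vee U_{i+k-1}$ is the maximum of the $k-1$ ``interior'' variables; thus $A$ and $B$ are independent, $\mathbb P(A>t)=(1+t)^{-2}$, and $\mathbb P(B\le t)=(t/(1+t))^{k-1}$. Everything reduces to three elementary facts about the density $(1+t)^{-2}$: the beta integral $\int_0^\infty t^{k-1}(1+t)^{-(k+1)}\,dt=1/k$; the identity $\int_0^{b_n}2t(1+t)^{-2}\,dt=2\log(1+b_n)-2+O(1/b_n)$ together with $\log(1+b_n)=\tfrac12\log n+\log\log\log n+o(1)$; and $\int_0^\infty(\log(1+v))^2(1+v)^{-2}\,dv=2$.

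For the first assertion the plan is to compute the untruncated mean via independence, $\E[(A-B)^+]=\int_0^\infty\mathbb P(A>t)\,\mathbb P(B\le t)\,dt=\int_0^\infty t^{k-1}(1+t)^{-(k+1)}\,dt=1/k$, and then to bound the truncation error using that $a\mapsto(a-B)^+$ is nondecreasing and $1$-Lipschitz: $0\le(A-B)^+-(A\wedge b_n-B)^+\le(A-b_n)^+$, whence $\E[(A-B)^+]-\E[n\widetilde L^k_i]\le\E[(A-b_n)^+]=(1+b_n)^{-1}=O\big(1/(\sqrt n\,\log\log n)\big)$.

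For the variance, since $\E[n\widetilde L^k_i]=1/k+o(1)$ it suffices to prove $\E[(n\widetilde L^k_i)^2]=\log n+2\log\log\log n+O(1)$. The upper bound is immediate from $n\widetilde L^k_i\le A\wedge b_n$ and $\E[(A\wedge b_n)^2]=\int_0^{b_n}2t(1+t)^{-2}\,dt=\log n+2\log\log\log n+O(1)$. For the matching lower bound, writing $X=n\widetilde L^k_i$ one has the pointwise identity $(A\wedge b_n)-X=(A\wedge b_n)\wedge B$, so that
\[
0\le(A\wedge b_n)^2-X^2=\big((A\wedge b_n)-X\big)\big((A\wedge b_n)+X\big)\le 2(A\wedge b_n)(A\wedge B),
\]
and I would bound $\E[2(A\wedge b_n)(A\wedge B)]=O(1)$ by conditioning on $A$ and splitting on $\{B\ge A\}$ versus $\{B<A\}$, using the Bernoulli inequality $\mathbb P(B>a)=1-(a/(1+a))^{k-1}\le(k-1)/(1+a)$ (so $\E[A^2\mathbbm 1_{\{B\ge A\}}]\le(k-1)\E[A]<\infty$) together with $\int_0^a s\,f_B(s)\,ds\le(k-1)\log(1+a)$ and $\E[A\log(1+A)]<\infty$. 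This correction term is the one genuinely delicate point: $B$ has infinite second moment (indeed infinite first moment when $k=2$), so a naive bound such as $\E[B^2]$ or $\E[(A\wedge b_n)B]$ diverges, and one must exploit that $B\le A\wedge b_n$ on $\{X>0\}$ together with the polynomial decay of $\mathbb P(B>a)$.

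For the covariance, set $X=n\widetilde L^k_i$, $Y=n\widetilde L^{k'}_{i'}$, $A_1=U_i\wedge U_{i+k}$, $A_2=U_{i'}\wedge U_{i'+k'}$, so that $0\le X\le A_1\wedge b_n$ and $0\le Y\le A_2\wedge b_n$. Since $X,Y\ge0$ we have $-\E[X]\E[Y]\le\Cov(X,Y)\le\E[XY]\le\E[(A_1\wedge b_n)(A_2\wedge b_n)]$, and $\E[X]\E[Y]=O(1)$ by the first assertion, so it remains to bound $\E[(A_1\wedge b_n)(A_2\wedge b_n)]$. Because $(i,k)\neq(i',k')$, the endpoint sets $\{i,i+k\}$ and $\{i',i'+k'\}$ are distinct and therefore share at most one index. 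If they are disjoint -- which in particular covers the case that the index windows $\{i,\dots,i+k\}$ and $\{i',\dots,i'+k'\}$ do not meet, where $X$ and $Y$ are outright independent -- then $A_1\wedge b_n$ and $A_2\wedge b_n$ are independent and $\E[(A_1\wedge b_n)(A_2\wedge b_n)]\le\E[A_1]\E[A_2]=1$; if they share a single index $m$, then conditioning on $U_m$ makes the two remaining factors independent and $\E[(A_1\wedge b_n)(A_2\wedge b_n)]\le\E[(\log(1+U_m))^2]=2$. In either case $\Cov(X,Y)=O(1)$, completing the proof.
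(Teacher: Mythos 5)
Your proposal is correct and follows essentially the same route as the paper: all three estimates are reduced to elementary computations with the tail $\mathbb{P}(U_i>x)=1/(1+x)$, the truncation at $\sqrt{n}\log\log n$ contributes only $O(1/(\sqrt{n}\log\log n))$ to the mean and $O(1)$ elsewhere, and the covariance is bounded by $\E[(U_i\wedge U_{i+k}\wedge b_n)(U_{i'}\wedge U_{i'+k'}\wedge b_n)]$ with the same case split according to whether the endpoint indices share a value. The only cosmetic difference is in the variance lower bound, where you control the correction term $2(A\wedge b_n)(A\wedge B)$ by conditioning and tail decay instead of the paper's direct sandwich of the double integral via $1-k/(x+1)\le(x/(1+x))^k\le 1$; both give the stated $\log n+2\log\log\log n+O(1)$.
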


\begin{proof}
In this proof, we use that 
$$\mathbb{P}(U_{i}\ge x)=\int_{x}^\infty \frac{1}{(1+u)^2}\ du=\frac{1}{1+x}.$$
For the mean, using the inequality 
\begin{equation}\label{Central limit theorem for an approximated version: analysis inequality}
1-\frac{k}{x+1}\le \left(\frac{x}{1+x}\right)^{k}\le 1,\qquad\text{ for } k\in\mathbb{N}, \: x\ge 0   
\end{equation}
in the last line, we have
\begin{equation*}
\begin{split}
&\E\left[(U_{i}\wedge U_{i+k}\wedge \sqrt{n}\log\log n-U_{i+1}\vee\dots\vee U_{i+k-1})^+\right]\\
&\qquad=\int_0^\infty\mathbb{P}(U_{i+1}\vee\dots\vee U_{i+k-1}\le x\le U_{i}\wedge U_{i+k} \wedge \sqrt{n}\log\log n)\ dx\\
&\qquad=\int_0^{\sqrt{n}\log\log n}\mathbb{P}(U_{i+1}\vee\dots\vee U_{i+k-1}\le x\le U_{i}\wedge U_{i+k} )\ dx\\
&\qquad= \int_0^{\sqrt{n}\log\log n} \left(\frac{x}{x+1}\right)^{k-1}\frac{1}{(1+x)^2} \ dx\\
&\qquad=\frac{1}{k}\left(\frac{\sqrt{n}\log\log n}{\sqrt{n}\log\log n+1}\right)^{k}\\
&\qquad=\frac{1}{k}+O\left(\frac{1}{\sqrt{n}\log\log n}\right).
\end{split}
\end{equation*}
    
For the variance, we have
\begin{align*}
&\E\left[\left((U_{i}\wedge U_{i+k}\wedge \sqrt{n}\log\log n-U_{i+1}\vee\dots\vee U_{i+k-1})^+\right)^2\right]\\
&\qquad=\int_0^{\infty}\int_0^\infty \mathbb{P}(U_{i+1}\vee\dots\vee U_{i+k-1}\le x\le  U_{i}\wedge U_{i+k}\wedge \sqrt{n}\log\log n,\\
&\qquad\qquad\qquad U_{i+1}\vee\dots\vee U_{i+k-1}\le y\le  U_{i}\wedge U_{i+k}\wedge \sqrt{n}\log\log n)\ dx\ dy\\
&\qquad=\int_0^{\sqrt{n}\log\log n}\int_0^{\sqrt{n}\log\log n} \mathbb{P}(U_{i+1}\vee\dots\vee U_{i+k-1}\le x\le  U_{i}\wedge U_{i+k},\\
&\qquad\qquad\qquad\qquad\qquad\qquad U_{i+1}\vee\dots\vee U_{i+k-1}\le y\le  U_{i}\wedge U_{i+k})\ dx\ dy\\
&\qquad=2\int_0^{\sqrt{n}\log\log n}\int_0^{y} \mathbb{P}(U_{i+1}\vee\dots\vee U_{i+k-1}\le x) \mathbb{P}(U_{i}\wedge U_{i+k}\ge y)\ dx\ dy\\
&\qquad=2\int_0^{\sqrt{n}\log\log n}\int_0^{y} \left(\frac{x}{1+x}\right)^{k-1} \frac{1}{(1+y)^2}\ dx\ dy.\\
\end{align*}
Note that
$$2\int_0^{\sqrt{n}\log\log n}\int_0^{y}\frac{1}{(1+y)^2}\ dx\ dy=2\int_0^{\sqrt{n}\log\log n}\frac{y}{(1+y)^2}\ dy=\log n+2\log\log\log n+O(1),\\$$
and
\begin{align*}
&2\int_0^{\sqrt{n}\log\log n}\int_0^{y}\left(1-\frac{k-1}{x+1}\right)\left(\frac{1}{1+y}\right)^2\ dx\ dy\\
&\qquad	=2\int_0^{\sqrt{n}\log\log n}\frac{y-(k-1)\log (y+1)}{(1+y)^2}\ dy=\log n+2\log\log\log n+O(1).\\
\end{align*}
It follows from \eqref{Central limit theorem for an approximated version: analysis inequality} that
$$\E\left[\left((U_{i}\wedge U_{i+k}\wedge \sqrt{n}\log\log n-U_{i+1}\vee\dots\vee U_{i+k-1})^+\right)^2\right]=\log n+2\log\log\log n+O(1)$$
and
\begin{align*}
\Var(n\widetilde{L}^k_{i}) &= \Var\left((U_{i}\wedge U_{i+k}\wedge \sqrt{n}\log n-U_{i+1}\vee\dots\vee U_{i+k-1})^+\right)\\
&=\log n+2\log\log\log n+O(k)-\left(\frac{1}{k}+O\left(\frac{1}{\sqrt{n}\log\log n}\right)\right)^2 \\
&=\log n+2\log\log n+O(1).
\end{align*}
    
For the covariance, we have
\begin{equation*}
\begin{split}
0&\le\E[(U_{i}\wedge U_{i+k}\wedge \sqrt{n}\log\log n-U_{i+1}\vee\dots\vee U_{i+k-1})^+\\
&\qquad \times (U_{i'}\wedge U_{i'+k'}\wedge \sqrt{n}\log\log n-U_{i'+1}\vee\dots\vee U_{i'+k'-1})^+]\\
&\le\E\left[(U_{i}\wedge U_{i+k}\wedge \sqrt{n}\log\log n)(U_{i'}\wedge U_{i'+k'}\wedge \sqrt{n}\log\log n)\right].
\end{split}
\end{equation*}
If $(i,k)\neq(i',k')$, then $i$, $i+k$, $i'$, $i'+k'$ include three or four distinct values. If $i$, $i+k$, $i'$, $i'+k'$ include three distinct values, we have
\begin{equation*}
\begin{split}
&\E\left[(U_{i}\wedge U_{i+k}\wedge \sqrt{n}\log\log n)(U_{i'}\wedge U_{i'+k'}\wedge\sqrt{n}\log\log n)\right]\\
&\qquad=\E\left[(U_{1}\wedge U_{2}\wedge \sqrt{n}\log\log n)(U_{2}\wedge U_{3}\wedge \sqrt{n}\log\log n)\right]\\
&\qquad=\int_0^{\infty}\int_0^{\infty}\mathbb{P}(U_{1}\wedge U_{2}\wedge \sqrt{n}\log\log n\ge x, U_{2}\wedge U_{3}\wedge \sqrt{n}\log\log n\ge y)\ dx\ dy\\
&\qquad=\int_0^{\sqrt{n}\log\log n}\int_0^{\sqrt{n}\log\log n}\mathbb{P}(U_{1}\wedge U_{2}\ge x, U_{2}\wedge U_{3}\ge y)\ dx\ dy\\
&\qquad=2\int_0^{\sqrt{n}\log\log n}\int_0^{y}\mathbb{P}(U_{1}\ge x, U_{2}\ge y,U_{3}\ge y)\ dx\ dy\\
&\qquad=2\int_0^{\sqrt{n}\log\log n}\int_0^{y}\frac{1}{1+x}\frac{1}{(1+y)^2}\ dx\ dy\\
&\qquad=O(1).			
\end{split}
\end{equation*}
If $i$, $i+k$, $i'$, $i'+k'$ include four distinct values, we have
\begin{equation*}
\begin{split}
&\E\left[(U_{i}\wedge U_{i+k}\wedge \sqrt{n}\log\log n)(U_{i'}\wedge U_{i'+k'}\wedge\sqrt{n}\log\log n)\right]\\
&\qquad=\E\left[(U_{1}\wedge U_{2}\wedge \sqrt{n}\log\log n)(U_{3}\wedge U_{4}\wedge \sqrt{n}\log\log n)\right]\\
&\qquad=\left(\int_0^{\infty}\mathbb{P}(U_{1}\wedge U_{2}\wedge \sqrt{n}\log\log n\ge x)\ dx\right)^2\\
&\qquad=\left(\int_0^{\sqrt{n}\log\log n}\mathbb{P}(U_{1}\ge x, U_{2}\ge x)\ dx\right)^2\\
&\qquad=\left(\int_0^{\sqrt{n}\log\log n}\frac{1}{(1+x)^2}\ dx\right)^2\\
&\qquad=O(1).			
\end{split}
\end{equation*}
It follows that
$$\Cov\left(n\widetilde{L}^k_{i},n\widetilde{L}^{k'}_{i'}\right)=O(1)-\left(\frac{1}{k}+O\left(\frac{1}{\sqrt{n}\log\log n}\right)\right)\left(\frac{1}{k'}+O\left(\frac{1}{\sqrt{n}\log\log n}\right)\right)=O(1),$$
which completes the proof.
\end{proof}

\subsection{Central limit theorem for the approximate branch lengths}\label{Subsection: central limit theorem for an approximated version}

We prove a central limit theorem with the random variables $\widetilde{L}^k_{i}$.
\begin{proposition}\label{Proposition: central limit theorem for an approximated version} 
Let $\widetilde{L}^k_{i}$ be defined as in \eqref{Central limit theorem for an approximation: definition of widetilde L^k_i}. Then
\begin{equation}\label{LtildeCLT}
\sqrt{\frac{n}{\log n}}\left\{ \sum_{i=1}^{n-k-1} \widetilde{L}^k_{i}-\frac{1}{k}\right\}_{k=1}^K \Rightarrow N\left(0,I_{K\times K}\right).
\end{equation}
\end{proposition}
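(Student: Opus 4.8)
The plan is to obtain the multivariate statement from a one-dimensional central limit theorem via the Cramér--Wold device, and to prove that one-dimensional statement by Bernstein's blocking method, which reduces it to the Lindeberg--Feller theorem for sums of independent random variables. Write $S^k_n=\sum_{i=1}^{n-k-1}\widetilde L^k_i$ and $\mu^k_n=\E[n\widetilde L^k_i]$; by stationarity of the i.i.d.\ sequence $(U_i)$ the mean $\mu^k_n$ does not depend on $i$, and $\mu^k_n=\frac1k+O\bigl(1/(\sqrt n\log\log n)\bigr)$ by Lemma~\ref{Lemma: moment computation}. Since $0\le n\widetilde L^k_i\le\sqrt n\log\log n$, a short calculation yields
$$
\sqrt{\frac{n}{\log n}}\Bigl(S^k_n-\frac1k\Bigr)=\frac{1}{\sqrt{n\log n}}\sum_{i=1}^{n-K-1}\bigl(n\widetilde L^k_i-\mu^k_n\bigr)+o(1),
$$
where both the discrepancy between $\mu^k_n$ and $\frac1k$ and the change of the summation range from $n-k-1$ to $n-K-1$ contribute only a deterministic $o(1)$. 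By the Cramér--Wold device it therefore suffices to show, for every fixed $\theta=(\theta_1,\dots,\theta_K)\in\R^K$, that
$$
\frac{1}{\sqrt{n\log n}}\sum_{i=1}^{n-K-1}Y_{i,n}\Rightarrow N\Bigl(0,\sum_{k=1}^K\theta_k^2\Bigr),\qquad Y_{i,n}:=\sum_{k=1}^K\theta_k\bigl(n\widetilde L^k_i-\mu^k_n\bigr).
$$
For each $n$ the sequence $(Y_{i,n})_i$ is stationary, centered, and $K$-dependent, since $Y_{i,n}$ is a function of $U_i,\dots,U_{i+K}$ only, and the truncation yields the deterministic bound $|Y_{i,n}|\le C\sqrt n\log\log n$ with $C$ depending only on $\theta$ and $K$.

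\textbf{Variance.} Lemma~\ref{Lemma: moment computation} gives $\Var(Y_{i,n})=\bigl(\sum_k\theta_k^2\bigr)\log n+O(\log\log\log n)$, while $\Cov(Y_{i,n},Y_{j,n})=O(1)$ for $0<|i-j|\le K$ and $=0$ for $|i-j|>K$. Summing over the $\asymp n$ diagonal terms and $\asymp n$ nonzero off-diagonal terms gives $\Var\bigl(\sum_{i=1}^{n-K-1}Y_{i,n}\bigr)=\bigl(\sum_k\theta_k^2\bigr)n\log n+O(n\log\log\log n)$, so $\sqrt{n/\log n}$ is the correct normalization; the same bookkeeping applied to pairs $k\neq k'$ shows the off-diagonal entries of the limiting covariance matrix vanish, confirming that the limit in \eqref{LtildeCLT} is $N(0,I_{K\times K})$.

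\textbf{Blocking and Lindeberg.} Fix a block length $p_n\to\infty$ growing slowly enough that $p_n\sqrt n\log\log n=o(\sqrt{n\log n})$, for instance $p_n=\lfloor\sqrt{\log n}/(\log\log n)^2\rfloor$. Partition $\{1,\dots,n-K-1\}$ into alternating big blocks of length $p_n$ and small blocks of length $K+1$, and let $B_j$ denote the sum of the $Y_{i,n}$ over the $j$th big block. Because consecutive big blocks are separated by a gap of length $K+1>K$, the $B_j$ are independent. The small blocks together contain $O(n/p_n)$ indices, so $\Var\bigl(\sum_{\text{small}}Y_{i,n}\bigr)=O\bigl((n/p_n)\log n\bigr)=o(n\log n)$; thus the small-block part is negligible after dividing by $\sqrt{n\log n}$, and by Cauchy--Schwarz so is its cross-covariance with the big-block sum, giving $\Var\bigl(\sum_jB_j\bigr)=\bigl(\sum_k\theta_k^2\bigr)n\log n\,(1+o(1))$. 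For the independent centered array $\{B_j\}$ we have the deterministic bound $|B_j|\le p_n\cdot C\sqrt n\log\log n=o(\sqrt{n\log n})$, so for every $\eps>0$ the Lindeberg truncation terms $\E\bigl[B_j^2\,\mathbbm{1}_{\{|B_j|>\eps\sqrt{n\log n}\}}\bigr]$ vanish identically for all large $n$. Lindeberg--Feller then gives $\frac{1}{\sqrt{n\log n}}\sum_jB_j\Rightarrow N\bigl(0,\sum_k\theta_k^2\bigr)$; adding back the negligible small-block sum and invoking the Cramér--Wold device completes the proof.

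\textbf{Main obstacle.} The delicate point is the simultaneous choice of $p_n$: it must tend to infinity (so that the small blocks are asymptotically negligible and there are many independent big blocks) yet grow slowly enough that the crude bound $|Y_{i,n}|\le C\sqrt n\log\log n$ forces the Lindeberg terms to vanish outright. It is the $\log n$ growth of the per-term variances that makes a triangular-array argument unavoidable here, and it is exactly the truncation at $\sqrt n\log\log n$ introduced in \eqref{Central limit theorem for an approximation: definition of widetilde L^k_i} that supplies the uniform bound needed to close it; without the truncation the relevant second moments would be infinite.
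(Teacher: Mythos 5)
Your proof is correct, and its skeleton coincides with the paper's: reduce to a one-dimensional statement by Cram\'er--Wold, feed in the moment estimates of Lemma \ref{Lemma: moment computation} to identify $\sigma_n^2=(\sum_k \theta_k^2)\,n\log n+O(n\log\log\log n)$, and observe that the truncation at $\sqrt{n}\log\log n$ makes the Lindeberg-type condition hold trivially since $\sqrt{n}\log\log n\le\eps\sqrt{n\log n}$ for large $n$. The genuine difference is at the dependent-CLT step: the paper orders the pairs $(i,k)$ lexicographically and invokes the $m$-dependent central limit theorem for triangular arrays (Theorem 1.1 of \cite{janson2021central}) directly on the array $\{x_k n\widetilde L^k_i\}$, whereas you first collapse the $K$ statistics into the single $K$-dependent stationary row $Y_{i,n}=\sum_k\theta_k(n\widetilde L^k_i-\mu^k_n)$ and then prove the needed CLT by hand via Bernstein blocking (big blocks of length $p_n=\lfloor\sqrt{\log n}/(\log\log n)^2\rfloor$ separated by gaps of length $K+1$) plus the classical Lindeberg--Feller theorem. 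What your route buys is self-containedness: the only external input is Lindeberg--Feller, and the delicate choice of $p_n$ --- large enough that the small blocks contribute $O((n/p_n)\log n)=o(n\log n)$ variance, small enough that $|B_j|\le Cp_n\sqrt n\log\log n=o(\sqrt{n\log n})$ kills the Lindeberg terms outright --- is handled correctly; what the paper's route buys is brevity, since the triangular-array $m$-dependent CLT absorbs the blocking bookkeeping. Two cosmetic points: the terms dropped in passing from the range $n-k-1$ to $n-K-1$ are random rather than deterministic, though they are bounded in absolute value by a deterministic $o(\sqrt{n\log n})$ quantity, which is all you need; and when invoking Cram\'er--Wold you should allow $\theta$ with some zero coordinates (these cases are just the same statement with smaller $K$, so nothing is lost).
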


\begin{proof}
We consider the sequence
$$
\left\{\sum_{i=1}^{n-k-1} n\widetilde{L}^k_{i}\right\}_{k=1}^{K}
$$
and the linear combination
\begin{equation}\label{Central limit theorem for an approximated version: linear combination}
\sum_{k=1}^{K}\sum_{i=1}^{n-k-1} x_k n\widetilde{L}^k_{i}, 
\end{equation}
where $\{x_k\}_{k=1}^K$ is a sequence of nonzero real numbers. Recall from \eqref{Central limit theorem for an approximation: definition of widetilde L^k_i} that
$$
n\widetilde{L}^k_{i}=(U_{i}\wedge U_{i+k}\wedge \sqrt{n}\log\log n-U_{i+1}\vee\dots\vee U_{i+k-1})^{+}
$$ 
and the random variables $U_i$ are independent. Therefore, $x_kn\widetilde{L}^k_{i}$ and $x_{k'}n\widetilde{L}^{k}_{i}$ are dependent only if $|i-i'|\le K$. If we order the pairs $(i,k)$ lexicologically, so that $(i,k)\le(i',k')$ if $i<i'$, or if $i=i'$ and $k\le k'$, then $x_kn\widetilde{L}^k_{i}$ and $x_{k'}n\widetilde{L}^k_{i}$ are independent if $(i,k)$ and $(i',k')$ are at least $K(K+1)$ apart in this total ordering. We apply the $m$-dependent central limit theorem for a triangular array (see Theorem 1.1 of \cite{janson2021central}) to 
$$
\left\{x_kn\widetilde{L}^k_{i}-\E\left[x_kn\widetilde{L}^k_{i}\right]\right\}_{1\le k\le K, 1\le i\le n-k-1}.
$$
Let
$$\sigma_n^2=\Var\left(	\sum_{k=1}^{K}\sum_{i=1}^{n-k-1} x_k n \widetilde{L}^k_{i}\right).$$
It suffices to check the following condition:
\begin{equation}\label{Central limit theorem for an approximated version: clt condition}
\lim_{n\rightarrow\infty}\frac{1}{\sigma_n^2}\sum_{k=1}^{K}\sum_{i=1}^{n-k-1}\E\left[\left|x_kn\widetilde{L}^k_{i}-\E\left[x_kn\widetilde{L}^k_{i}\right]\right|^2\mathbbm{1}_{\left\{\left|x_kn\widetilde{L}^k_{i}-\E\left[x_kn\widetilde{L}^k_{i}\right]\right|> \epsilon\sigma_n\right\}}\right]=0,\text{ for all }\epsilon>0.
\end{equation}
By Lemma \ref{Lemma: moment computation}, we have
$$\E\left[nx_k\widetilde{L}^{k}_{i}\right]=\frac{x_k}{k}+O\left(\frac{1}{\sqrt{n}\log\log n}\right),$$
and
\begin{equation*}
\begin{split}
\sigma_n^2&=\sum_{k,k'=1}^{K}\sum_{i=1}^{n-k-1}\sum_{i'=1,|i-i'|\le K}^{n-k'-1}\Cov\left(x_kn \widetilde{L}^k_{i},x_{k'}n \widetilde{L}^{k'}_{i'}\right)\\
&=\sum_{k=1}^K\sum_{i=1}^{n-k-1} x_k^2 \Var(n \widetilde{L}^k_{i})+\sum_{k,k'=1}^{K}\sum_{i=1}^{n-k-1}\sum_{i'=1,|i-i'|\le K,(i,k)\neq (i',k')}^{n-k'-1}x_kx_{k'}\Cov\left(n \widetilde{L}^k_{i},n \widetilde{L}^{k'}_{i'}\right)\\
&=\left(\sum_{k=1}^{K}x_k^2\right)n\log n+O(n\log\log\log n).
\end{split}
\end{equation*}
Since the random variable $n\widetilde{L}^k_{1},\dots, n\widetilde{L}^k_{n-k-1}$ are identical in distribution for a fixed $k$, and their expectation is bounded by some constant independent of $n$, to check \eqref{Central limit theorem for an approximated version: clt condition}, it suffices to check that
\begin{equation}\label{Central limit theorem for an approximated version: clt condition simplified}
\lim_{n\rightarrow\infty}\frac{1}{\log n}\E\left[\left(n\widetilde{L}^k_{i}\right)^2\mathbbm{1}_{\left\{n\widetilde{L}^k_{i}> \epsilon\sqrt{n\log n}\right\}}\right]=0,\qquad\text{ for all }\epsilon>0, 1\le k\le K.
\end{equation}
Notice that for any $\epsilon>0$, for $n$ sufficiently large, we have
$$n\widetilde{L}^k_{i}=(U_{i}\wedge U_{i+k}\wedge \sqrt{n}\log\log n-U_{i+1}\vee\dots\vee U_{i+k-1})^{+}\le \sqrt{n}\log\log n\le\epsilon\sqrt{n\log n}.$$
Therefore, the condition \eqref{Central limit theorem for an approximated version: clt condition simplified} is trivially satisfied and we have
\begin{equation*}
\frac{1}{\sqrt{n\log n}}\left(\sum_{k=1}^{K}\sum_{i=1}^{n-k-1} x_kn \widetilde{L}^k_{i}-\E\left[\sum_{k=1}^{K}\sum_{i=1}^{n-k-1} x_kn  \widetilde{L}^k_{i}\right]\right) \Rightarrow N\left(0,\sum_{k=1}^K x_k^2\right).
\end{equation*}
Since $\{x_k\}_{k=1}^K$ is an arbitrary sequence of nonzero real numbers, by the Cramér–Wold theorem, we have
\begin{equation}\label{Central limit theorem for an approximated version: clt}
\left\{ \frac{1}{\sqrt{n\log n}}\sum_{i=1}^{n-k-1}\left(n \widetilde{L}^k_{i}-\E\left[n\widetilde{L}^k_{i}\right]\right)\right\}_{k=1}^K \Rightarrow N\left(0,I_{K\times K}\right).
\end{equation}
By Lemma \ref{Lemma: moment computation}, we have
$$\E\left[n\widetilde{L}^k_{i}\right]=\frac{1}{k}+O\left(\frac{1}{\sqrt{n}\log\log n}\right).$$
Plugging this into \eqref{Central limit theorem for an approximated version: clt}, we obtain the result in \eqref{LtildeCLT}.
\end{proof}

\section{Error bounds}\label{Section: error bounds}
Recall the different approximations we have for $L^k_{i,n,T}/N_T$ in \eqref{Central limit theorem for an approximation: definition of widetilde L^k_i}, \eqref{Central limit theorem for an approximation: definition of widetilde L^k_i,n}, and \eqref{Central limit theorem for an approximation: definition of widetilde L^k_i,n,T}. In this section, we bound the errors in these approximations. In Section \ref{Subsection: error between widetilde L^k_i and widetilde L^k_i,n}, we bound the error from taking $n\rightarrow\infty$ (after having taken $T\rightarrow\infty$) by comparing $\widetilde{L}^k_{i}$ and $\widetilde{L}^k_{i,n}$. In Section \ref{Subsection: error between widetilde L^k_i,n and widetilde L^k_i,n,T}, we bound the error from taking $T\rightarrow\infty$ by comparing $\widetilde{L}^k_{i,n}$ and $\widetilde{L}^k_{i,n,T}$. In Section \ref{Subsection: Error between widetilde L^k_i,n,T and L^k_i,n,T/N_T}, we bound the error resulting from truncation and from approximating $T/(Q_{n,T} N_T)$ with $1/n$ by comparing $\widetilde{L}^k_{i,n,T}$ and $L^{k}_{i,n,T}/N_T$. Finally, we use these bounds to prove Theorem \ref{Theorem} in Section \ref{Subsection: Proof of Theorem}.

To bound these errors, we use a similar approach as in Section 1.1 of the supplemental information to \cite{johnson2023estimating} by considering a maximal coupling of random variables (see section 4.4 of chapter 1 of \cite{thorisson}). If $X$ and $Y$ are random variables with probability density functions $f$ and $g$ respectively, a maximal coupling of $X$ and $Y$ is a coupling such that $P(X = Y) = \int_{-\infty}^{\infty} f(x) \wedge g(x) \ dx$ and $P(X \neq Y) = \frac{1}{2} \int_{-\infty}^{\infty} |f(x) - g(x)| \ dx.$ 

\subsection{Error between $\widetilde{L}^k_{i}$ and  $\widetilde{L}^k_{i,n}$}\label{Subsection: error between widetilde L^k_i and widetilde L^k_i,n}

Consider the coupling of $Q_{n}/n$ and $1/W$ such that
$$\mathbb{P}(Q_{n}/n=1/W)=\int_{0}^\infty f_{Q_{n}/n}(q)\wedge f_{1/W}(q) \ dq.$$
On the event $Q_{n}/n=1/W=q$, we can couple $U_{i,n}$ and $U_i$ such that
$$\mathbb{P}(U_{i,n}=U_{i}|Q_{n}/n=1/W=q)=\int_0^\infty f_{U_i}(u)\wedge f_{U_{i,n}|Q_{n}/n=q}(u)\ du.$$ 
On the event ${Q}_{n}/n\neq1/W$, take $U_{i,n}$ and $U_{i}$ to be arbitrary random variables with the prescribed conditional densities.
We now bound the error from the approximation under this coupling.

\begin{lemma}\label{Lemma: coupling of Q_n/n and 1/W}
Under the coupling described above, we have
$$
\mathbb{P}(Q_{n}/n \neq 1/W) = O\left(\frac{1}{n}\right),
$$
and 
$$
\mathbb{P}(U_{i,n}\neq U_i)=O\left(\frac{1}{n}\right).
$$
\end{lemma}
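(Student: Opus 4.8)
The plan is to reduce both statements to bounds on total variation distances, using that a maximal coupling of random variables with densities $f$ and $g$ satisfies $\mathbb{P}(X \ne Y) = \frac12 \int |f - g|$, and then to establish those bounds by direct estimation.

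First I would bound the total variation distance between $Q_n/n$ and $1/W$. Writing the two densities with a common factor $q^{-2}$, namely $f_{Q_n/n}(q) = q^{-2}(1 + \frac{1}{nq})^{-(n+1)}$ and $f_{1/W}(q) = q^{-2} e^{-1/q}$, the substitution $x = 1/q$ gives
\[
\int_0^\infty \left| f_{Q_n/n}(q) - f_{1/W}(q) \right| dq = \int_0^\infty \left| \left(1 + \frac{x}{n}\right)^{-(n+1)} - e^{-x} \right| dx .
\]
Writing $(1 + \frac{x}{n})^{-(n+1)} = e^{-\psi_n(x)}$ with $\psi_n(x) = (n+1)\log(1 + x/n)$, the elementary inequalities $t - \frac{t^2}{2} \le \log(1+t) \le t$ for $t \ge 0$ give, when $n \ge 1$, $x - \frac{x^2}{n} \le \psi_n(x) \le x + \frac{x}{n}$, hence $|\psi_n(x) - x| \le \frac{x + x^2}{n}$ and $\min(\psi_n(x), x) \ge x - \frac{x^2}{n}$. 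Combined with $|e^{-a} - e^{-b}| \le e^{-\min(a,b)} |a - b|$ for $a, b \ge 0$, this yields $| (1 + \frac{x}{n})^{-(n+1)} - e^{-x} | \le e^{-x} e^{x^2/n} \frac{x + x^2}{n}$. On $0 \le x \le \sqrt{n}$ the factor $e^{x^2/n}$ is at most $e$, so this part of the integral is at most $\frac{e}{n} \int_0^\infty e^{-x}(x + x^2)\, dx = \frac{3e}{n}$; for $x > \sqrt{n}$ I would bound the two terms separately, using $\int_{\sqrt{n}}^\infty e^{-x}\, dx = e^{-\sqrt{n}}$ and, after the substitution $u = 1 + x/n$, $\int_{\sqrt{n}}^\infty (1 + x/n)^{-(n+1)}\, dx = (1 + n^{-1/2})^{-n} \le e^{1/2} e^{-\sqrt{n}}$, both of which are $o(1/n)$. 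Adding the pieces gives $\mathbb{P}(Q_n/n \ne 1/W) = O(1/n)$.

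For the second bound I would first compute the conditional total variation distance. On the event $Q_n/n = 1/W = q$ the relevant densities are $f_{U_{i,n}|Q_n/n = q}(u) = \frac{1+nq}{nq}(1+u)^{-2} \mathbbm{1}_{\{0 < u < nq\}}$ and $f_{U_i}(u) = (1+u)^{-2} \mathbbm{1}_{\{u > 0\}}$; their difference equals $\frac{1}{nq}(1+u)^{-2}$ on $(0, nq)$ and $(1+u)^{-2}$ on $(nq, \infty)$, so a one-line integration gives $\frac12 \int | f_{U_{i,n}|Q_n/n = q} - f_{U_i} | = \frac{1}{1+nq}$, whence the maximal coupling satisfies $\mathbb{P}(U_{i,n} \ne U_i \mid Q_n/n = 1/W = q) = \frac{1}{1+nq}$. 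Since the coincidence measure of the maximal coupling of $Q_n/n$ and $1/W$ has density $f_{Q_n/n} \wedge f_{1/W} \le f_{1/W}$,
\[
\mathbb{P}(U_{i,n} \ne U_i) \le \mathbb{P}(Q_n/n \ne 1/W) + \int_0^\infty \frac{1}{1+nq}\, f_{1/W}(q)\, dq = O\!\left(\frac1n\right) + \int_0^\infty \frac{x}{x+n}\, e^{-x}\, dx ,
\]
where the last step again uses $x = 1/q$; since $\int_0^\infty \frac{x}{x+n} e^{-x}\, dx \le \frac1n \int_0^\infty x e^{-x}\, dx = \frac1n$, this gives $\mathbb{P}(U_{i,n} \ne U_i) = O(1/n)$ with a constant uniform in $i$.

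The main obstacle is the first total variation estimate. The naive Lipschitz bound $|e^{-\psi_n(x)} - e^{-x}| \le |\psi_n(x) - x| \le \frac{x + x^2}{n}$ is not integrable in $x$, so one has to keep the decaying factor $e^{-\min(\psi_n(x), x)}$ and dispose of the tail $x > \sqrt{n}$ separately, where the correction $e^{x^2/n}$ is no longer $O(1)$; once this split is set up, everything else reduces to elementary integrals.
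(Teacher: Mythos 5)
Your proposal is correct. For the second claim your argument is essentially the paper's: you compute the pointwise difference of the conditional densities exactly as in the paper's display, obtain the conditional disagreement probability (you get the exact value $1/(1+nq)$ where the paper settles for $C/(nq)$), and then integrate against $f_{Q_n/n}\wedge f_{1/W}\le f_{1/W}$ and add the probability that $Q_n/n\ne 1/W$; the substitution $x=1/q$ versus integrating $e^{-1/q}/(nq^3)$ directly is cosmetic. The real difference is in the first claim: the paper does not prove it at all, but defers to Lemma 5 of the supplemental information of \cite{johnson2023estimating}, whereas you give a self-contained total-variation estimate. Your reduction to $\int_0^\infty\bigl|(1+x/n)^{-(n+1)}-e^{-x}\bigr|\,dx$ via $x=1/q$, the bound $|e^{-a}-e^{-b}|\le e^{-\min(a,b)}|a-b|$ with $|\psi_n(x)-x|\le (x+x^2)/n$ and $\min(\psi_n(x),x)\ge x-x^2/n$, and the split at $x=\sqrt n$ (where the tail contributions $e^{-\sqrt n}$ and $(1+n^{-1/2})^{-n}\le e^{1/2}e^{-\sqrt n}$ are negligible) all check out, so what your approach buys is a complete argument that does not lean on an external reference; what it costs is the extra bookkeeping of the truncation, which is unavoidable since, as you note, the naive Lipschitz bound is not integrable.
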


\begin{proof}
The first claim is proved in Lemma 5 in the supplemental information of \cite{johnson2023estimating}. Note that $\widetilde{Q}_{n,\infty}$ in \cite{johnson2023estimating} corresponds to $Q_{n}/n$ here. Although Lemma 5 of \cite{johnson2023estimating} is stated for the supercritical case, the formulas for $f_{Q_{n}/n}$ and $f_{1/W}$ are the same in the critical case.

For the second claim, by comparing $f_{U_{i,n}|Q_{n}/n=q}$ in \eqref{Approximation of H_i,n,T for large n and T: f_U_i,n|Q_n/n=q} and $f_{U_{i}}$ in \eqref{Approximation of H_i,n,T for large n and T: f_U_i}, we have
\begin{equation}\label{Error between widetilde L^k_i and  widetilde L^k_i,n: comparing density}
|f_{U_{i,n}|Q_{n}/n=q}(u)-f_{U_{i}}(u)|= \frac{1}{nq}\frac{1}{(1+u)^2}\mathbbm{1}_{\{0\le u\le nq\}}+\frac{1}{(1+u)^2}\mathbbm{1}_{\{u>nq\}}.  
\end{equation}
It follows that
$$
\mathbb{P}(U_{i,n}\neq U_i|Q_{n}/n=1/W=q)\le \frac{1}{2}\int_0^{nq} \frac{1}{nq}\frac{1}{(1+u)^2}\ du+\frac{1}{2}\int_{nq}^\infty \frac{1}{(1+u)^2}\ du\le \frac{C}{nq}.
$$
Integrating this against $f_{Q_n/n}\wedge f_{1/W}$, we have
$$
\mathbb{P}(U_{i,n}\neq U_i,Q_{n}/n=1/W)\le C\int_0^\infty \frac{f_{Q_n/n}(q)\wedge f_{1/W}(q)}{nq}\ dq\le
C\int_0^\infty \frac{e^{-1/q}}{nq^3}\ dq\le \frac{C}{n}.
$$
The second claim follows from this bound and the first claim.
\end{proof}

\begin{lemma}\label{Lemma: widetilde L^k_i,n-widetilde L^k_i}
Let $\widetilde{L}^k_{i}$ and $\widetilde{L}^k_{i,n}$ be defined in \eqref{Central limit theorem for an approximation: definition of widetilde L^k_i} and \eqref{Central limit theorem for an approximation: definition of widetilde L^k_i,n}. Under the coupling described above, we have
$$\sum_{i=1}^{n-k-1} \left(\widetilde{L}^k_{i}-\widetilde{L}^k_{i,n}\right)=O_p\left(\frac{\log\log n}{\sqrt{n}}\right).$$
\end{lemma}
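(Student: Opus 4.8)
The plan is to bound the sum of absolute differences $\sum_i |\widetilde{L}^k_i - \widetilde{L}^k_{i,n}|$ and show it is $O_p(\log\log n / \sqrt{n})$, which suffices since it dominates the signed sum. The key observation is that under the maximal coupling from Lemma~\ref{Lemma: coupling of Q_n/n and 1/W}, the random vectors $(U_1,\dots,U_{n-1})$ and $(U_{1,n},\dots,U_{n-1,n})$ differ only in those coordinates $i$ for which the event $\{U_{i,n}\neq U_i\}$ occurs (and, on the rare event $\{Q_n/n \neq 1/W\}$, possibly all coordinates). Since $n\widetilde{L}^k_i$ depends only on $U_i, U_{i+1}, \dots, U_{i+k}$ through a Lipschitz function (a difference of min and max of truncated variables), a single discrepancy at coordinate $j$ can affect at most the $k+1 \leq K+1$ terms $\widetilde{L}^k_i$ with $j-k \le i \le j$. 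Moreover, because each $n\widetilde{L}^k_i$ is bounded by the truncation level $\sqrt{n}\log\log n$, each discrepant coordinate contributes at most $O((K+1)\sqrt{n}\log\log n / n) = O(\log\log n/\sqrt{n})$ to the sum $\sum_i |\widetilde{L}^k_i - \widetilde{L}^k_{i,n}|$.

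First I would write, on the event $\{Q_n/n = 1/W\}$,
$$
\sum_{i=1}^{n-k-1}\left|\widetilde{L}^k_i - \widetilde{L}^k_{i,n}\right| \le \frac{K+1}{n}\,\sqrt{n}\log\log n \cdot \sum_{j=1}^{n-1}\mathbbm{1}_{\{U_{j,n}\neq U_j\}} = \frac{(K+1)\log\log n}{\sqrt{n}}\sum_{j=1}^{n-1}\mathbbm{1}_{\{U_{j,n}\neq U_j\}}.
$$
By Lemma~\ref{Lemma: coupling of Q_n/n and 1/W}, $\E[\sum_{j=1}^{n-1}\mathbbm{1}_{\{U_{j,n}\neq U_j\}}] = (n-1)\cdot O(1/n) = O(1)$, so $\sum_j \mathbbm{1}_{\{U_{j,n}\neq U_j\}} = O_p(1)$ by Markov's inequality. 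Combining, the sum on the event $\{Q_n/n=1/W\}$ is $O_p(\log\log n/\sqrt{n})$. On the complementary event $\{Q_n/n \neq 1/W\}$, which has probability $O(1/n)$ by Lemma~\ref{Lemma: coupling of Q_n/n and 1/W}, I would bound $\sum_i |\widetilde{L}^k_i - \widetilde{L}^k_{i,n}|$ crudely by $\sum_i (\widetilde{L}^k_i + \widetilde{L}^k_{i,n}) \le (n/n)\cdot 2\sqrt{n}\log\log n \cdot (\text{something}) $; more carefully, each sum $\sum_i n\widetilde{L}^k_i$ is at most $\sum_i (U_i \wedge \sqrt{n}\log\log n)$ whose expectation is $O(\sqrt{n}\log n \cdot \log\log n)$ or so, but in any case is $O_p(n\log\log n)$ trivially via the truncation bound on each of the $\le n$ terms. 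Since this event has probability $O(1/n)$, its contribution to any fixed tail probability vanishes, so it does not affect the $O_p$ conclusion; formally one writes the total as a sum of the two pieces and notes that a term that is nonzero only on an event of probability $o(1)$ is $o_p(1)$, hence certainly $O_p(\log\log n/\sqrt{n})$ after absorbing.

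The main obstacle, and the step requiring the most care, is the combinatorial bookkeeping in the first display: verifying that the Lipschitz/locality property of $i \mapsto n\widetilde{L}^k_i$ really does confine the effect of one discrepant coordinate $U_j$ to $O(K)$ terms, and that the per-term change is bounded by the truncation level rather than by something larger. This follows because $n\widetilde{L}^k_i = (U_i \wedge U_{i+k} \wedge \sqrt{n}\log\log n - \bigvee_{i<j<i+k} U_j)^+$ is, after truncating every variable at $\sqrt n \log\log n$ as in the Remark following \eqref{Central limit theorem for an approximation: definition of widetilde L^k_i,n,T}, a $1$-Lipschitz function (in $\ell^\infty$) of the truncated coordinates $(U_i \wedge \sqrt n\log\log n, \dots, U_{i+k}\wedge \sqrt n \log\log n)$, so changing one coordinate changes $n\widetilde L^k_i$ by at most the $\ell^\infty$-distance between the truncated values, which is itself at most $\sqrt n\log\log n$; and coordinate $j$ enters $n\widetilde{L}^k_i$ only for the $k+1$ indices $i \in \{j-k, \dots, j\}$. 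Everything else is a routine application of Markov's inequality and the first-moment bounds already established.
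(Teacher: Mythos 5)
Your proof is correct, but it takes a genuinely different route from the paper. The paper proves the lemma by a per-term expectation estimate: it shows $\E\bigl[|\widetilde{L}^k_{i,n}-\widetilde{L}^k_{i}|\mathbbm{1}_{\{Q_{n}/n=1/W>(\log\log n)/\sqrt{n}\}}\bigr]=O(\log\log n/n^{3/2})$ by comparing the densities $f_{U_{i,n}|Q_n/n=q}$ and $f_{U_i}$ directly (the restriction to $q>(\log\log n)/\sqrt{n}$ is there precisely to control the $1/(nq)$ factor in the density difference), and then combines this with the two $o(1)$ probability bounds via Markov's inequality. You instead exploit the maximal coupling combinatorially: on each sample path, $\widetilde{L}^k_i=\widetilde{L}^k_{i,n}$ unless some coordinate $j\in\{i,\dots,i+k\}$ has $U_{j,n}\neq U_j$, each discrepant coordinate affects at most $k+1$ terms, and each affected term changes by at most the truncation level $\sqrt{n}\log\log n$ divided by $n$ (the Lipschitz refinement you discuss is not even needed, since both quantities lie in $[0,(\log\log n)/\sqrt{n}]$); since $\E[\sum_j \mathbbm{1}_{\{U_{j,n}\neq U_j\}}]=O(1)$ by the unconditional bound $\mathbb{P}(U_{j,n}\neq U_j)=O(1/n)$ in Lemma \ref{Lemma: coupling of Q_n/n and 1/W}, the number of discrepancies is $O_p(1)$ and the total error is $O_p(\log\log n/\sqrt{n})$. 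In fact your restriction to the event $\{Q_n/n=1/W\}$ (and the slightly vague handling of its complement) is unnecessary: the pathwise bound and the unconditional discrepancy-count estimate already cover the bad event, though your observation that a quantity vanishing off an event of probability $o(1)$ is $o_p$ of any rate does patch that piece rigorously. Your argument is shorter and avoids the integral estimates entirely; the paper's approach yields a sharper quantitative expected-error bound per term, which is more information than the lemma needs but mirrors the technique used again in Lemma \ref{Lemma: widetilde L^k_i,n,T-widetilde L^k_i,n}, where the coupling discrepancy probability $O(1/T)$ alone would still suffice for a crude count but the authors keep the same density-comparison template.
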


\begin{proof}
By the triangle inequality and Markov's inequality, it suffices to show that
\begin{equation}\label{Error between widetilde L^k_i and widetilde L^k_i,n}
\begin{split}
\mathbb{P}(Q_{n}/n\neq 1/W)&=o(1),\\
\mathbb{P}\left(Q_{n}/n=1/W\le \frac{\log\log n}{\sqrt{n}}\right) &= o(1),\\
 \E\left[\left|\widetilde{L}^k_{i,n}-\widetilde{L}^k_{i}\right|\mathbbm{1}_{\left\{Q_{n}/n=1/W > (\log \log n)/\sqrt{n}\right\}}\right]&=O\left(\frac{\log\log n}{n^{3/2}}\right).
\end{split} 
\end{equation}

The first claim in \eqref{Error between widetilde L^k_i and widetilde L^k_i,n} follows from Lemma \ref{Lemma: coupling of Q_n/n and 1/W}.
The second claim in \eqref{Error between widetilde L^k_i and widetilde L^k_i,n} follows from the observation that
$$
\mathbb{P}\left(Q_{n}/n=1/W\le \frac{\log\log n}{\sqrt{n}}\right)\le \mathbb{P}\left(1/W\le \frac{\log\log n}{\sqrt{n}}\right) = o(1).
$$

For the third claim in \eqref{Error between widetilde L^k_i and widetilde L^k_i,n}, notice that
\begin{equation}\label{Labs}
\left|\widetilde{L}^k_{i,n}-\widetilde{L}^k_{i}\right|\le\frac{1}{n}\sum_{j=i}^{i+k} \big|U_{j,n}\wedge\sqrt{n}\log\log n-U_{j}\wedge\sqrt{n}\log\log n\big|.
\end{equation}
We have
\begin{align} \label{Error between widetilde L^k_i and widetilde L^k_i,n, last claim}
&\big|U_{i,n}\wedge\sqrt{n}\log\log n-U_{i}\wedge \sqrt{n}\log\log n\big| \nonumber \\
&\qquad=\big|U_{i,n}\mathbbm{1}_{\{U_{i,n}\le \sqrt{n}\log\log n\}}+\sqrt{n}(\log\log n)\mathbbm{1}_{\{U_{i,n}> \sqrt{n}\log\log n\}} \nonumber \\
&\qquad\qquad-U_{i}\mathbbm{1}_{\{U_{i}\le \sqrt{n}\log\log n\}}-\sqrt{n}(\log\log n)\mathbbm{1}_{\{U_{i}> \sqrt{n}\log\log n\}}\big|\nonumber \\
&\qquad\le\sqrt{n}(\log\log n)\big|\mathbbm{1}_{\{U_{i,n}> \sqrt{n}\log\log n\}}-\mathbbm{1}_{\{U_{i}> \sqrt{n}\log\log n\}}\big| \nonumber \\
&\qquad\qquad+\big|U_{i,n}\mathbbm{1}_{\{U_{i,n}\le \sqrt{n}\log\log n\}}-U_{i}\mathbbm{1}_{\{U_{i}\le \sqrt{n}\log\log n\}}\big|.
\end{align}
To bound the first term on the right-hand side of \eqref{Error between widetilde L^k_i and widetilde L^k_i,n, last claim}, we use \eqref{Error between widetilde L^k_i and  widetilde L^k_i,n: comparing density} in the third line and the fact that $nq> \sqrt{n}\log\log n$ for $q>(\log\log n)/\sqrt{n}$ in the fourth line to get
\begin{align}\label{UiUin1}
&\sqrt{n}(\log\log n)\mathbb{E}\left[\left|\mathbbm{1}_{\{U_{i,n}>\sqrt{n}\log\log n\}}-\mathbbm{1}_{\{U_{i}>\sqrt{n}\log\log n\}}\right|\mathbbm{1}_{\left\{Q_{n}/n=1/W>(\log\log n)/\sqrt{n}\right\}}\right]\nonumber\\
&\qquad\le\sqrt{n}(\log\log n)\int_{(\log\log n)/\sqrt{n}}^{\infty}\int_{\sqrt{n}\log\log n}^{\infty} \left|f_{U_{i,n}|Q_{n}/n=q}(u)-f_{U_i}(u)\right|\left|f_{1/W}(q)\wedge f_{Q_{n}/n}(q)\right|\ du\ dq \nonumber \\
&\qquad\le\sqrt{n}(\log\log n)\int_{(\log\log n)/\sqrt{n}}^\infty\left(\int_{\sqrt{n}\log\log n}^{nq}\frac{1}{nq}\frac{1}{(1+u)^2}\ du+\int_{nq}^\infty\frac{1}{(1+u)^2}\ du\right)\frac{e^{-1/q}}{q^2}\ dq \nonumber \\
&\qquad\le\sqrt{n}(\log\log n)\int_{(\log\log n)/\sqrt{n}}^\infty\left(\frac{1}{nq}\frac{1}{\sqrt{n}\log\log n}+\frac{1}{nq}\right)\frac{e^{-1/q}}{q^2}\ dq \nonumber \\
&\qquad\le\sqrt{n}(\log\log n)\int_{(\log\log n)/\sqrt{n}}^\infty\frac{2}{nq}\frac{e^{-1/q}}{q^2}\ dq \nonumber \\
&\qquad=O\left(\frac{\log\log n}{n^{1/2}}\right).
\end{align}
To bound the second term on the right-hand side of \eqref{Error between widetilde L^k_i and widetilde L^k_i,n, last claim}, we use \eqref{Error between widetilde L^k_i and  widetilde L^k_i,n: comparing density} in the third line to get
\begin{align}\label{Error bound: U_{i,n,infty} and  U_{i}}
&\E\left[\left|U_{i,n}\mathbbm{1}_{\{U_{i,n}\le \sqrt{n}\log\log n\}}-U_{i}\mathbbm{1}_{\{U_{i}\le \sqrt{n}\log\log n\}}\right|\mathbbm{1}_{\left\{Q_{n}/n= 1/W>(\log\log n)/\sqrt{n}\right\}}\right]\nonumber\\
&\qquad\le\int_{(\log\log n)/\sqrt{n}}^{\infty}\int_0^{\sqrt{n}\log\log n} u\left|f_{U_{i,n}|Q_{n}/n=q}(u)-f_{U_i}(u)\right|\left|f_{1/W}(q)\wedge f_{Q_{n}/n}(q)\right|\ du\ dq\nonumber \\
&\qquad= \int_{(\log\log n)/{\sqrt{n}}}^\infty \int_0^{\sqrt{n}\log\log n} u \cdot \frac{1}{nq}\frac{1}{(1+u)^2}\frac{e^{-1/q}}{q^2} \ du\ dq \nonumber \\
&\qquad\le\frac{1}{n}\left(\int_0^\infty \frac{e^{-1/q}}{q^3}\ dq\right)\left(\int_0^{\sqrt{n}\log\log n}\frac{u}{(1+u)^2} \ du\right) \nonumber \\
&\qquad=O\left(\frac{\log n}{n}\right).
\end{align}
It follows from \eqref{Labs}, \eqref{Error between widetilde L^k_i and widetilde L^k_i,n, last claim}, \eqref{UiUin1}, and \eqref{Error bound: U_{i,n,infty} and U_{i}} that 
$$
\E\left[\left|\widetilde{L}^k_{i,n}-\widetilde{L}^k_{i}\right|\mathbbm{1}_{\{Q_{n}/n= 1/W>(\log\log n)/\sqrt{n}\}}\right]=O\left(\frac{\log\log n}{n^{3/2}}\right)
$$
and the last claim is proved.
\end{proof}

\subsection{Error between $\widetilde{L}^k_{i,n}$ and $\widetilde{L}^k_{i,n,T}$}\label{Subsection: error between widetilde L^k_i,n and widetilde L^k_i,n,T}

Consider the coupling of ${Q}_{n,T}$ and $Q_{n}$ such that
$$
\mathbb{P}({Q}_{n,T}=Q_{n})=\int_{0}^\infty f_{{Q}_{n,T}}(q)\wedge f_{Q_{n}}(q) \ dq.
$$
On the event ${Q}_{n,T}={Q}_{n}=q$, we can couple $U_{i,n,T}$ and $U_{i,n}$ such that
$$
\mathbb{P}(U_{i,n,T}=U_{i,n}|{Q}_{n,T}={Q}_{n}=q)=\int_0^\infty f_{U_{i,n,T}|Q_{n,T}=q}(u)\wedge f_{U_{i,n}|Q_{n}=q}(u)\ du.
$$ 
On the event  ${Q}_{n,T}\neq{Q}_{n}$, take $U_{i,n,T}$ and $U_{i,n}$ to be arbitrary random variables with the prescribed conditional densities. We now bound the error from the approximation under this coupling.  Because we are interested in applying these results when the condition \eqref{Introduction: sequence condition} holds, we may restrict our attention to the case when $n < T$.

\begin{lemma}\label{Lemma: coupling of Q_n,T and Q_n}
Under the coupling described above, and the assumption that $n<T$, we have
$$
\mathbb{P}(Q_{n,T} \neq Q_n) = O\left(\frac{n}{T}\right),
$$
and
$$
\mathbb{P}(U_{i,n,T} \neq U_{i,n}) = O\left(\frac{n}{T}\right).
$$
\end{lemma}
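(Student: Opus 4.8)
The plan is to bound both total variation distances directly from the closed-form densities \eqref{Approximation of H_i,n,T for large T: f_Q_n,T}, \eqref{Approximation of H_i,n,T for large T: f_Q_n}, \eqref{Approximation of H_i,n,T for large T: f_U_i,n,T|Q_n,T=q}, \eqref{Approximation of H_i,n,T for large T: f_U_i,n|Q_n=q}, using the defining property $\mathbb{P}(X \neq Y) = \frac{1}{2}\int |f - g|$ of the maximal coupling.

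For the first claim, I would start from the sign pattern of $f_{Q_{n,T}} - f_{Q_n}$: on the support $(0, 1/\delta_T)$ of $f_{Q_{n,T}}$ one has $f_{Q_{n,T}}(q)/f_{Q_n}(q) = \big( (1+q)/(1+q-q\delta_T) \big)^{n+1} \ge 1$, while $f_{Q_{n,T}}(q) = 0 \le f_{Q_n}(q)$ for $q \ge 1/\delta_T$. Since both densities integrate to $1$, this forces $\frac{1}{2}\int |f_{Q_{n,T}} - f_{Q_n}| = \int_{1/\delta_T}^\infty f_{Q_n}(q)\, dq = \mathbb{P}(Q_n \ge 1/\delta_T)$. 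The substitution $u = q/(1+q)$ turns $f_{Q_n}$ into the Beta density $n u^{n-1}$ on $(0,1)$, so $\mathbb{P}(Q_n \ge x) = 1 - (x/(1+x))^n$ for every $x > 0$; taking $x = 1/\delta_T = 1 + T$ (see \eqref{Sampling from the coalescent point process: definition of delta_T}) and applying Bernoulli's inequality gives $\mathbb{P}(Q_{n,T} \neq Q_n) = 1 - \big(1 - 1/(2+T)\big)^n \le n/(2+T) = O(n/T)$. The hypothesis $n < T$ is not needed here.

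For the second claim, I would condition on the event $\{Q_{n,T} = Q_n = q\}$, on which $U_{i,n,T}$ and $U_{i,n}$ have densities $\frac{1}{q}(1+\alpha q)(1+\alpha u)^{-2}$ and $\frac{1}{q}(1+q)(1+u)^{-2}$ on $(0,q)$, where $\alpha := \delta_T T = 1 - \delta_T$; since $n < T$ forces $\delta_T \le 1/2$, we have $\alpha \ge 1/2$ and hence $1 + \alpha u \asymp 1 + u$ on $(0,\infty)$. Applying the mean value theorem to $s \mapsto (1+sq)(1+su)^{-2}$ over $s \in [\alpha, 1]$, the difference of the two conditional densities is bounded pointwise on $(0,q)$ by $C \delta_T q^{-1}(q + u + qu)(1+u)^{-3}$; integrating in $u$ over $(0,q)$, each of the three resulting terms contributes $O(q)$, so the conditional total variation distance is $O(\delta_T)$ uniformly in $q$. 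Combining, $\mathbb{P}(U_{i,n,T} \neq U_{i,n}) \le \mathbb{P}(Q_{n,T} \neq Q_n) + O(\delta_T) \int_0^\infty \big(f_{Q_{n,T}} \wedge f_{Q_n}\big)(q)\, dq = O(n/T) + O(1/T) = O(n/T)$.

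I expect the only delicate point to be the pointwise density estimate in the second claim: the $u$- and $q$-dependence must be organized so that the bound integrates to $O(\delta_T)$ rather than picking up an extra logarithmic or polynomial factor of $q$, and the comparison $1 + \alpha u \asymp 1 + u$ (coming from $\alpha \ge 1/2$) is exactly what keeps the relevant integrals $O(q)$; everything else is exact algebra. As with the preceding lemma, one could alternatively deduce the first claim from the analogue of Lemma 5 in the supplemental information of \cite{johnson2023estimating}, since $f_{Q_{n,T}}$ and $f_{Q_n}$ have the same form there as functions of $\delta_T$, but the computation above is short and self-contained.
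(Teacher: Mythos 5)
Your proof is correct and follows essentially the same route as the paper: a direct pointwise comparison of the explicit densities, giving a uniform $O(1/T)$ bound on the conditional total variation distance of the $U$'s, combined with a tail/ratio estimate for the $Q$'s. The only differences are cosmetic --- you compute the total variation distance between $Q_{n,T}$ and $Q_n$ exactly via the sign pattern (the paper instead bounds $f_{Q_{n,T}}/f_{Q_n}$ by $1+O(n/T)$ on the support and adds the tail term), and you use an additive mean-value bound for the conditional densities where the paper uses the multiplicative estimate $f_{U_{i,n,T}|Q_{n,T}=q}=f_{U_{i,n}|Q_{n}=q}\bigl(1+O(1/T)\bigr)$, which is uniform in $u$ and $q$ and makes the integration step immediate.
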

\begin{proof}
By comparing $f_{Q_{n,T}}$ in \eqref{Approximation of H_i,n,T for large T: f_U_i,n,T|Q_n,T=q} and $f_{Q_{n}}$ in \eqref{Approximation of H_i,n,T for large T: f_Q_n}, we have
$$
f_{Q_{n,T}}(q)=f_{Q_{n}}(q)\Big(1+O\Big(\frac{n}{T}\Big)\Big),\qquad\text{ for } q\in(0,1/\delta_T).
$$
Therefore, 
\begin{equation*}
\begin{split}
\mathbb{P}(Q_{n,T}\neq Q_{n})&=\int_0^\infty \left|f_{Q_{n,T}}(q)-f_{Q_{n}}(q)\right| \ dq\\
&\le C\int_0^{1/\delta_T} \frac{n}{T}f_{{Q}_{n}}(q)\ dq+\int_{1/\delta_T} ^\infty f_{Q_n}(q)\ dq\\
&\le C\int_0^{\infty} \frac{n}{T}f_{{Q}_{n}}(q)\ dq+\int_{1/\delta_T} ^\infty f_{Q_n}(q)\ dq\\
&=\frac{Cn}{T}+1-\left(\frac{T+1}{T+2}\right)^n\\
&=O\left(\frac{n}{T}\right),
\end{split}
\end{equation*}
which proves the claim.

By comparing and $f_{U_{i,n,T}|Q_{n,T}=q}$ in \eqref{Approximation of H_i,n,T for large T: f_U_i,n,T|Q_n,T=q} and $f_{U_{i,n}|Q_{n}=q}$ in \eqref{Approximation of H_i,n,T for large T: f_U_i,n|Q_n=q} we have 
\begin{equation}\label{Error between widetilde L^k_i,n and widetilde L^k_i,n,T: comparing density}
f_{U_{i,n,T}|Q_{n,T}=q}(u)=f_{U_{i,n}|Q_{n}=q}(u)\Big(1+O\Big(\frac{1}{T}\Big)\Big).
\end{equation}
Since the bound of $1 + O(1/T)$ does not depend on $u$, it follows that 
$$
\mathbb{P}(U_{i,n,T}\neq U_{i,n}|Q_{n,T}=Q_n=q) = O\left(\frac{1}{T}\right).
$$
Since the right hand side does not depend on $q$, it follows that
$$
\mathbb{P}(U_{i,n,T}\neq U_{i,n},Q_{n,T}=Q_n)=O\left(\frac{1}{T}\right).
$$
The second claim follows from this result and the first claim.
\end{proof}

\begin{lemma}\label{Lemma: widetilde L^k_i,n,T-widetilde L^k_i,n}
Under the coupling described above, we have
\begin{equation}\label{Error between widetilde L^k_i,n and widetilde L^k_i,n,T}
\E\left[\left|\widetilde{L}^k_{i,n,T}-\widetilde{L}^k_{i,n}\right|\mathbbm{1}_{\{Q_{n,T}= Q_{n}\}}\right]=O\left(\frac{\log n}{nT}\right).
\end{equation} 
\end{lemma}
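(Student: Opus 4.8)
The plan is to follow the template of the proof of Lemma~\ref{Lemma: widetilde L^k_i,n-widetilde L^k_i}: reduce $\widetilde{L}^k_{i,n,T}-\widetilde{L}^k_{i,n}$ to a sum over the individual coordinates, and then control each coordinate via the maximal coupling together with the uniform density comparison \eqref{Error between widetilde L^k_i,n and widetilde L^k_i,n,T: comparing density}. Write $M=\sqrt{n}\,\log\log n$. Since the outer truncation at $M$ may be pushed onto each $U_j$ (as in the remark following the definitions of the $\widetilde{L}$'s) and since $x\mapsto x^{+}$ and the pointwise minimum and maximum are $1$-Lipschitz, I would first record the deterministic bound
\[
\bigl|\widetilde{L}^k_{i,n,T}-\widetilde{L}^k_{i,n}\bigr|\le\frac{1}{n}\sum_{j=i}^{i+k}\bigl|U_{j,n,T}\wedge M-U_{j,n}\wedge M\bigr|.
\]
As $k$ is fixed, it then suffices to show $\E\bigl[\bigl|U_{j,n,T}\wedge M-U_{j,n}\wedge M\bigr|\,\mathbbm{1}_{\{Q_{n,T}=Q_{n}\}}\bigr]=O(\log n/T)$ for each $j$ in this range.

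For the coordinate bound I would condition on $Q_{n,T}=Q_n=q$, under which $(U_{j,n,T},U_{j,n})$ is a maximal coupling of the conditional densities $f_{U_{i,n,T}|Q_{n,T}=q}$ in \eqref{Approximation of H_i,n,T for large T: f_U_i,n,T|Q_n,T=q} and $f_{U_{i,n}|Q_{n}=q}$ in \eqref{Approximation of H_i,n,T for large T: f_U_i,n|Q_n=q}. On the event $\{U_{j,n,T}\ne U_{j,n}\}$ the law of $U_{j,n,T}$ is dominated by $(f_{U_{j,n,T}|Q_{n,T}=q}-f_{U_{j,n}|Q_n=q})^{+}$, and symmetrically for $U_{j,n}$; together with $|a\wedge M-b\wedge M|\le(a\wedge M)+(b\wedge M)$ for $a,b\ge0$ this gives
\[
\E\bigl[\bigl|U_{j,n,T}\wedge M-U_{j,n}\wedge M\bigr|\ \big|\ Q_{n,T}=Q_n=q\bigr]\le 2\int_0^\infty (u\wedge M)\,\bigl|f_{U_{j,n,T}|Q_{n,T}=q}(u)-f_{U_{j,n}|Q_n=q}(u)\bigr|\,du.
\]
By the uniform comparison \eqref{Error between widetilde L^k_i,n and widetilde L^k_i,n,T: comparing density} the integrand is $O(1/T)\cdot\frac{1+q}{q}\frac{1}{(1+u)^2}\mathbbm{1}_{\{0<u<q\}}$, so the right-hand side is $O(1/T)\cdot\frac{1+q}{q}\int_0^q\frac{u\wedge M}{(1+u)^2}\,du$. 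A short computation shows $\frac{1+q}{q}\int_0^q\frac{u\wedge M}{(1+u)^2}\,du=O(\log n)$ uniformly in $q$: for $q\le1$ one bounds $\int_0^q\frac{u\wedge M}{(1+u)^2}\,du\le q^2/2$, which cancels the prefactor $\frac{1+q}{q}$, while for $q>1$ one uses $\frac{1+q}{q}\le2$ and $\int_0^\infty\frac{u\wedge M}{(1+u)^2}\,du=\log(1+M)=O(\log n)$. Integrating the resulting uniform bound against $f_{Q_{n,T}}\wedge f_{Q_n}$, which has total mass at most $1$, yields the coordinate bound and hence \eqref{Error between widetilde L^k_i,n and widetilde L^k_i,n,T}.

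The only subtlety is that the crude bound $\bigl|U_{j,n,T}\wedge M-U_{j,n}\wedge M\bigr|\le M$ on $\{U_{j,n,T}\ne U_{j,n}\}$, combined with $\mathbb{P}(U_{j,n,T}\ne U_{j,n})=O(1/T)$ from Lemma~\ref{Lemma: coupling of Q_n,T and Q_n}, would only give $O(M/T)=O(\sqrt{n}\,\log\log n/T)$ per coordinate, which after the factor $1/n$ is weaker than the target $O(\log n/(nT))$. Weighting by the true value $u\wedge M$ inside the integral is what replaces $M$ by $O(\log n)$, exactly as in \eqref{Error bound: U_{i,n,infty} and  U_{i}}. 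The remaining small-$q$ issue, namely the large prefactor $\frac{1+q}{q}$ in $f_{U_{j,n}|Q_n=q}$, causes no trouble because there the densities are supported on the short interval $(0,q)$.
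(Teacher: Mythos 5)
Your proposal is correct and takes essentially the same approach as the paper: reduce to the coordinate-wise differences $|U_{j,n,T}\wedge M-U_{j,n}\wedge M|$ with $M=\sqrt{n}\log\log n$, use the structure of the maximal coupling (disagreement sub-densities dominated by $(f-g)^{\pm}$) together with the uniform comparison $f_{U_{i,n,T}|Q_{n,T}=q}=f_{U_{i,n}|Q_{n}=q}\bigl(1+O(1/T)\bigr)$, and weight by the actual value rather than by $M$ to turn the crude $O(M/T)$ into $O(\log n/T)$ per coordinate. The only cosmetic difference is that the paper splits the truncated difference into an indicator term at level $M$ plus a value term and integrates explicitly in $q$ against $f_{Q_n}$, whereas you bound $|a\wedge M-b\wedge M|\le (a\wedge M)+(b\wedge M)$ on the disagreement event and obtain a bound uniform in $q$; both give the stated $O\bigl(\log n/(nT)\bigr)$.
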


\begin{proof}
Note that
\begin{equation}\label{LLT1}
\left|\widetilde{L}^k_{i,n,T}-\widetilde{L}^k_{i,n}\right|\le\frac{1}{n}\sum_{j=i}^{i+k} \big|U_{j,n,T}\wedge\sqrt{n}\log\log n-U_{j,n}\wedge\sqrt{n}\log\log n\big|,
\end{equation}
Notice that, as in \eqref{Error between widetilde L^k_i and widetilde L^k_i,n, last claim}
\begin{align}\label{Error between widetilde L^k_i,n and widetilde L^k_i,n,T, last claim}
&\big|U_{i,n,T}\wedge\sqrt{n}\log\log n-U_{i,n}\wedge \sqrt{n}\log\log n\big|\nonumber \\
&\qquad\le\sqrt{n}(\log\log n)\big|\mathbbm{1}_{\{U_{i,n,T}> \sqrt{n}\log\log n\}}-\mathbbm{1}_{\{U_{i,n}> \sqrt{n}\log\log n\}}\big| \nonumber \\
&\qquad\qquad+\big|U_{i,n,T}\mathbbm{1}_{\{U_{i,n,T}\le \sqrt{n}\log\log n\}}-U_{i,n}\mathbbm{1}_{\{U_{i,n}\le \sqrt{n}\log\log n\}}\big|.
\end{align}
To bound the first term on the right-hand side \eqref{Error between widetilde L^k_i,n and widetilde L^k_i,n,T, last claim}, notice that $U_{i,n,T}\le Q_{n,T}$ by construction. Therefore, $U_{i,n,T}>\sqrt{n}\log\log n$ occurs only if $Q_{n,T}>\sqrt{n}\log\log n$. Similar arguments hold for $U_{i,n}$ and $Q_{n}$. Using \eqref{Error between widetilde L^k_i,n and widetilde L^k_i,n,T: comparing density} in the fourth line and assuming $n\ge2$ in the last line, we have
\begin{align}\label{LLT3}
&\sqrt{n}(\log\log n)\mathbb{E}\left[|\mathbbm{1}_{\{U_{i,n,T}>\sqrt{n}\log\log n\}}-\mathbbm{1}_{\{U_{i,n}>\sqrt{n}\log\log n\}}|\mathbbm{1}_{\left\{Q_{n,T}=Q_{n}\right\}}\right] \nonumber \\
&\qquad=\sqrt{n}(\log\log n)\mathbb{E}\left[|\mathbbm{1}_{\{U_{i,n,T}>(\log\log n)/\sqrt{n}\}}-\mathbbm{1}_{\{U_{i,n}>\sqrt{n}\log\log n\}}|\mathbbm{1}_{\left\{Q_{n,T}=Q_{n}>\sqrt{n}\log\log n\right\}}\right] \nonumber \\
&\qquad\le\sqrt{n}(\log\log n)\int_{\sqrt{n}\log\log n}^{\infty}\int_{\sqrt{n}\log\log n}^\infty \left|f_{Q_{n,T}}(q)\wedge f_{Q_{n}}(q)\right|\left|f_{U_{i,n,T}|Q_{n,T}=q}(u)-f_{U_{i,n}|Q_{n}=q}(u)\right|\ du\ dq \nonumber \\
&\qquad\le C\sqrt{n}(\log\log n)\int_{\sqrt{n}\log\log n}^\infty\int_{\sqrt{n}\log\log n}^q f_{Q_n}(q)\frac{1}{T}f_{U_{i,n}|Q_{n}=q}(u)\ du\ dq \nonumber \\
&\qquad= C\sqrt{n}(\log\log n)\int_{\sqrt{n}\log\log n}^\infty\int_{\sqrt{n}\log\log n}^q \frac{nq^{n-1}}{(1+q)^{n+1}}\frac{1}{T}\frac{1+q}{q}\frac{1}{(1+u)^2}\ du\ dq \nonumber \\
&\qquad\le C\frac{\sqrt{n}\log\log n}{T}\left(\int_{\sqrt{n}\log\log n}^\infty\frac{1}{(1+u)^2}\ du\right)\left(\int_{\sqrt{n}\log\log n}^\infty \frac{nq^{n-2}}{(1+q)^n}\ dq\right) \nonumber\\
&\qquad=O\left(\frac{1}{T}\right).
\end{align}
To bound the second term on the right-hand side of \eqref{Error between widetilde L^k_i,n and widetilde L^k_i,n,T, last claim}, we have
\begin{align}\label{LLT4}
&\E\left[\left|U_{i,n,T}\mathbbm{1}_{\{U_{i,n,T}\le \sqrt{n}\log\log n\}}-U_{i,n}\mathbbm{1}_{\{U_{i,n}\le \sqrt{n}\log \log n\}}\right|\mathbbm{1}_{\{Q_{n,T_n}= Q_{n}\}}\right] \nonumber \\
&\qquad\le\int_0^{\infty}\int_0^{\sqrt{n}\log\log n} u\left|f_{Q_{n,T}}(q)\wedge f_{Q_{n}}(q)\right|\left|f_{U_{i,n,T}|Q_{n,T}=q}(u)-f_{U_{i,n}|Q_{n}=q}(u)\right|\ du\ dq \nonumber \\
&\qquad\le C\int_0^{\infty}\int_0^{\sqrt{n}\log\log n} u f_{Q_{n}}(q)\frac{1}{T}f_{U_{i,n}|Q_{n}=q}(u)\ du\ dq \nonumber \\
&\qquad= C\int_0^\infty \int_0^{\sqrt{n}(\log\log  n)\wedge q} u \frac{nq^{n-1}}{(1+q)^{n+1}} \frac{1}{T}\frac{1+q}{q}\frac{1}{(1+u)^2} \ du\ dq \nonumber \\
&\qquad\le\frac{C}{T}\left(\int_0^\infty \frac{nq^{n-2}}{(1+q)^n}\ dq\right)\left(\int_0^{\sqrt{n}\log\log n}\frac{u}{(1+u)^2} \ du\right) \nonumber \\
&\qquad=O\left(\frac{\log n}{T}\right).
\end{align}
The result \eqref{Error between widetilde L^k_i,n and widetilde L^k_i,n,T} follows from \eqref{LLT1}, \eqref{Error between widetilde L^k_i,n and widetilde L^k_i,n,T, last claim}, \eqref{LLT3}, and \eqref{LLT4}.
\end{proof}

\subsection{Error between $\widetilde{L}^k_{i,n,T}$ and $L^{k}_{i,n,T}/N_T$}\label{Subsection: Error between widetilde L^k_i,n,T and L^k_i,n,T/N_T}
Recall the formula of $L^k_{i,n,T}/N_T$ in \eqref{Central limit theorem for an approximation: formula  L^k_i,n,T/N_T of L^k_i,n,T/N_T} and the definition of $\widetilde{L}^k_{i,n,T}$ in \eqref{Central limit theorem for an approximation: definition of widetilde L^k_i,n,T}.
There are two sources of error: the truncation at $\sqrt{n}\log\log n$ and the approximation of $T/\left(Q_{n,T}N_T\right)$ by $1/n$.  The following lemma gives some useful bounds.

\begin{lemma}\label{Lemma: widetilde{L}_{n,T}}
We have
\begin{align}\label{551}
\mathbb{P}\Big(\big(U_{i,n,T}\wedge U_{i+k,n,T}\wedge\sqrt{n}\log\log n&- U_{i+1,n,T}\vee\dots\vee U_{i+k-1,n,T}\big)^+ \nonumber \\
\neq \big(U_{i,n,T}\wedge U_{i+k,n,T}&-U_{i+1,n,T}\vee\dots\vee U_{i+k-1,n,T}\big)^+\Big)=O\left(\frac{1}{n(\log\log n)^2}\right)
\end{align}
and
\begin{align}\label{552}
\mathbb{P}\Big(\big(U_{k,n,T}\wedge{\sqrt{n}}\log\log n &-U_{1,n,T}\vee\dots\vee U_{k-1,n,T}\big)^+ \nonumber \\
\neq \big( U_{k,n,T}&-U_{1,n,T}\vee\dots\vee U_{k-1, n,T}\big)^+\Big)=O\left(\frac{1}{\sqrt{n}\log\log n}\right).
\end{align}
We also have
\begin{equation}\label{553}
\frac{T}{Q_{n,T}}-\frac{\E[N_T|Y_{n,T}]}{n}=O_p\left(\frac{T}{Q_{n,T}^2}+\frac{1}{Q_{n,T}}+\frac{1}{n}+\frac{T}{nQ_{n,T}}\right)
\end{equation}
and
\begin{equation}\label{554}
\frac{1}{n}\left(N_{T}-\E\left[N_{T}|Y_{n,T}\right]\right)= O_p\left(\frac{T}{n^{3/2}}\right).
\end{equation}
\end{lemma}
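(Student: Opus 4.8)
All four estimates rest on the same ingredients: the explicit conditional densities of Section~\ref{Section: Coalescent point process}, the identities $\delta_T = 1/(1+T)$ and $\delta_T T = 1-\delta_T \ge 1/2$ (valid for $T\ge 1$, which holds since $n<T$), and the tightness of both $Q_{n,T}/n$ and $n/Q_{n,T}$. The tightness holds because $f_{Q_n/n}$ converges pointwise, hence by Scheff\'e's lemma in distribution, to the density of $1/W$ with $W$ exponentially distributed with mean $1$, while $\mathbb{P}(Q_{n,T}\ne Q_n) = O(n/T)\to 0$ by Lemma~\ref{Lemma: coupling of Q_n,T and Q_n}; since $1/W\in(0,\infty)$ almost surely, both $Q_{n,T}/n$ and $n/Q_{n,T}$ are tight.

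For \eqref{551} and \eqref{552} I would start from the conditional tail bound implied by \eqref{Approximation of H_i,n,T for large T: f_U_i,n,T|Q_n,T=q}: integrating that density gives, for $0\le x\le q$,
\[
\mathbb{P}(U_{i,n,T}>x\mid Q_{n,T}=q)=\frac{q-x}{q\,(1+\delta_T T x)}\le\frac{1}{1+\delta_T T x},
\]
while this probability is $0$ for $x>q$ since $U_{i,n,T}\le Q_{n,T}$. Taking $x=\sqrt{n}\log\log n$ and using $\delta_T T\ge 1/2$, the conditional tail is at most $2/(\sqrt{n}\log\log n)$ on $\{Q_{n,T}>\sqrt{n}\log\log n\}$ and is $0$ otherwise. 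For \eqref{551}, the two $(\cdot)^+$ expressions can differ only if $U_{i,n,T}>\sqrt{n}\log\log n$ \emph{and} $U_{i+k,n,T}>\sqrt{n}\log\log n$ (if either is $\le \sqrt{n}\log\log n$ the truncation leaves the minimum unchanged), and conditionally on $Q_{n,T}$ these events are independent, so their joint probability is $O(1/(n(\log\log n)^2))$ after integrating out $Q_{n,T}$. For \eqref{552} the expressions can differ only if $U_{k,n,T}>\sqrt{n}\log\log n$, which has probability $O(1/(\sqrt{n}\log\log n))$ by the same bound; only one variable is truncated here, which is why the estimate is weaker.

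For \eqref{553} I would compute $\mathbb{E}[N_T\mid Y_{n,T}]$ exactly. Conditionally on $Y_{n,T}=y$, the $X_{i,n,T}$ in \eqref{Sampling from the coalescent point process: population size} are i.i.d.\ geometric with success probability $1-(1-\delta_T)(1-y)$, so $\mathbb{E}[N_T\mid Y_{n,T}=y]=(n+1)/(1-(1-\delta_T)(1-y))-1$. Substituting $y=\delta_T Q_{n,T}$, using $1-(1-\delta_T)(1-\delta_T Q_{n,T})=\delta_T(1+(1-\delta_T)Q_{n,T})$ and $T=1/\delta_T-1$, elementary algebra yields
\[
\frac{T}{Q_{n,T}}-\frac{\mathbb{E}[N_T\mid Y_{n,T}]}{n}=\frac{n-Q_{n,T}}{n\,\delta_T\,Q_{n,T}\,(1+(1-\delta_T)Q_{n,T})}-\frac{1}{1+(1-\delta_T)Q_{n,T}}-\frac{1}{Q_{n,T}}+\frac{1}{n}.
\]
Bounding $1+(1-\delta_T)Q_{n,T}\ge Q_{n,T}/2$, $1/\delta_T=1+T\le 2T$, and $|n-Q_{n,T}|\le n+Q_{n,T}$ shows the first term is at most $4T/Q_{n,T}^2+4T/(nQ_{n,T})$ in absolute value and the second at most $2/Q_{n,T}$, so the right-hand side is pathwise $O\big(T/Q_{n,T}^2+1/Q_{n,T}+1/n+T/(nQ_{n,T})\big)$, giving \eqref{553}. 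Finally, for \eqref{554}, conditionally on $Y_{n,T}$ we have $\Var(N_T\mid Y_{n,T})=(n+1)\Var(X_{0,n,T}\mid Y_{n,T})\le(n+1)\big(\mathbb{E}[X_{0,n,T}\mid Y_{n,T}]\big)^2$, and $\mathbb{E}[X_{0,n,T}\mid Y_{n,T}]=1/(\delta_T(1+(1-\delta_T)Q_{n,T}))\le 2(1+T)/Q_{n,T}\le 4T/Q_{n,T}$; hence $\frac{n}{T^2}\Var(N_T\mid Y_{n,T})\le 16n(n+1)/Q_{n,T}^2$, which is tight by tightness of $n/Q_{n,T}$. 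Thus $Z_n:=(\sqrt{n}/T)(N_T-\mathbb{E}[N_T\mid Y_{n,T}])$ has tight conditional second moment $\mathbb{E}[Z_n^2\mid Y_{n,T}]$, and a standard argument — given $\varepsilon>0$ pick $M$ with $\sup_n\mathbb{P}(\mathbb{E}[Z_n^2\mid Y_{n,T}]>M)<\varepsilon/2$, then bound $\mathbb{P}(|Z_n|>K)$ by $\varepsilon/2+M/K^2$ via conditional Markov — shows $\{Z_n\}$ is tight, i.e.\ $N_T-\mathbb{E}[N_T\mid Y_{n,T}]=O_p(T/\sqrt{n})$, and dividing by $n$ yields \eqref{554}.

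The estimates for \eqref{551} and \eqref{552} are essentially immediate once the conditional tail bound is in hand; the two genuinely fiddly points are the algebraic identity displayed for \eqref{553} (together with checking that each resulting term lands in one of the four advertised error buckets) and, for \eqref{554}, the passage from a tight \emph{conditional} second moment to tightness, which needs to be stated carefully since $\mathbb{E}[Z_n^2\mid Y_{n,T}]$ is itself a random variable.
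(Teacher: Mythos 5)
Your proof is correct in substance, and parts of it are cleaner than the paper's. For \eqref{551} and \eqref{552} you work directly with the exact conditional tail $\mathbb{P}(U_{i,n,T}>x\mid Q_{n,T}=q)=\frac{q-x}{q(1+\delta_T Tx)}\le (1+\delta_T Tx)^{-1}$, which avoids the paper's detour through comparing $f_{U_{i,n,T}|Q_{n,T}=q}$ and $f_{Q_{n,T}}$ with their $T=\infty$ limits and then integrating over $q$; since your bound is uniform in $q$ and the $U_{j,n,T}$ are conditionally i.i.d.\ given $Q_{n,T}$, both estimates follow at once, with the squared tail in \eqref{551} and a single tail in \eqref{552}. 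For \eqref{553} your argument is essentially the paper's: both reduce to the exact conditional mean of the geometric variables $X_{i,n,T}$ and a pathwise algebraic bound, and your displayed identity checks out (the numerators agree after clearing denominators), so the four error terms land in the advertised buckets.

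The genuine divergence is \eqref{554}. The paper takes the unconditional expectation of the conditional variance, using the density of $Y_{n,T}$ to get $\E[\Var(N_T\mid Y_{n,T})]=O(T^2/n)$, and then applies ordinary Chebyshev; you instead bound $\Var(N_T\mid Y_{n,T})\le C\,n T^2/Q_{n,T}^2$ and pass to tightness via a conditional Chebyshev argument, which you state correctly. The one caveat is your justification of tightness of $n/Q_{n,T}$: the route through $\mathbb{P}(Q_{n,T}\ne Q_n)=O(n/T)$ from Lemma \ref{Lemma: coupling of Q_n,T and Q_n} only gives a vanishing coupling error when $n/T\to0$, whereas the lemma is stated, and proved in the paper, with bounds holding uniformly over the range $n<T$ (with $n\ge 3$). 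This is harmless for the way the lemma is used in the proof of Theorem \ref{Theorem}, where $n/T_n\to0$, but to recover the statement as written you should either prove the tightness directly (the exact distribution function $\mathbb{P}(Q_{n,T}\le a)=\big(a/(1+(1-\delta_T)a)\big)^n$ makes this a short computation) or, more simply, integrate your conditional variance bound over $Q_{n,T}$, which is precisely the computation the paper carries out.
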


\begin{proof}
The event in \eqref{551} occurs only if $U_{i,n,T}\wedge U_{i+k,n,T}> \sqrt{n}\log\log n$. Using \eqref{Error between widetilde L^k_i,n and widetilde L^k_i,n,T: comparing density}, under the condition $n<T$, we have
\begin{equation*}
\begin{split}
\mathbb{P}(U_{i,n,T}\wedge U_{i+k,n,T}\ge \sqrt{n}\log\log n)&=\int_0^ {1/\delta_T} \left(\int_{\sqrt{n}\log\log n}^\infty f_{U_{i,n,T}|Q_{n,T}=q}(u)\ du\right)^2 f_{Q_{n,T}}(q)\ dq\\
&\le C\left(\int_0^ {1/\delta_T} \left(\int_{\sqrt{n}\log\log n}^\infty  f_{U_{i,n}|Q_{n}=q}(u)\ du\right)^2 f_{Q_{n}}(q)\ dq\right)\\
&\le C\left(\int_0^ {1/\delta_T} \frac{(1+q)^2}{q^2} \frac{1}{(1+\sqrt{n}\log\log n)^2} f_{Q_{n}}(q)\ dq\right)\\
&= \frac{C}{(1+\sqrt{n}\log\log n)^2}\left(\int_0^ {1/\delta_T} \frac{(1+q)^2}{q^2} \frac{nq^{n-1}}{(1+q)^{n+1}}\ dq\right)\\
&\le \frac{C}{(1+\sqrt{n}\log\log n)^2}\left(\int_0^\infty \frac{(1+q)^2}{q^2} \frac{nq^{n-1}}{(1+q)^{n+1}}\ dq\right)\\
&=\frac{Cn}{(n-2)(1+\sqrt{n}\log\log n)^2}\\
&\le \frac{C}{n(\log\log n)^2}.
\end{split}
\end{equation*}
Similarly, the probability in \eqref{552} is bounded by
\begin{equation*}
\begin{split}
\mathbb{P}(U_{k,n,T}\ge \sqrt{n}\log\log n)&=\int_0^ {1/\delta_T} \left(\int_{\sqrt{n}\log\log n}^\infty f_{U_{k,n,T}|Q_{n,T}=q}(u)\ du \right)f_{Q_{n,T}}(q)\ dq\\
&\le C\left(\int_0^ {1/\delta_T} \frac{1+q}{q} \frac{1}{1+\sqrt{n}\log\log n} f_{Q_{n}}(q)\ dq\right)\\
&\le \frac{C}{\sqrt{n}\log\log n}.
\end{split}
\end{equation*}

To prove \eqref{553}, recall from \eqref{Sampling from the coalescent point process: population size} that $N_T=\sum_{i=0}^n X_{i,n,T}-1$. By \eqref{Sampling from the coalescent point process: X_i,n,T p.m.f} and \eqref{Sampling from the coalescent point process: definition of delta_T}, we have for $0 \leq i \leq n$,
\begin{equation*}
\E\left[X_{i,n,T}|Y_{n,T}\right]
    =\frac{T+1}{TY_{n,T}+1}
\end{equation*}
and
\begin{equation*}
\Var(X_{i,n,T}|Y_{n,T})=\frac{T(T+1)(1-Y_{n,T})}{(TY_{n,T}+1)^2}.
\end{equation*}	
Recall that $Y_{n,T}=\delta_T Q_{n,T}$ by definition. It follows that
\begin{equation*}
\begin{split}
\frac{T}{Q_{n,T}}-\frac{\E[N_T|Y_{n,T}]}{n}&=
\frac{T}{Q_{n,T}}-\frac{1}{n}\left(\sum_{i=0}^{n}\E[X_{i,n,T}|Y_{n,T}]-1\right)\\
&=\frac{T}{Q_{n,T}}-\frac{1}{n}\left(\frac{(n+1)(T+1)}{TY_{n,T}+1}-1\right)\\
&=\frac{T}{Q_{n,T}}-\frac{(n+1)(T+1)}{n(TY_{n,T}+1)}+\frac{1}{n}\\
&=\frac{T}{Q_{n,T}}-\frac{(n+1)(T+1)}{n(\delta_T T Q_{n,T}+1)}+\frac{1}{n}\\
&=\frac{T}{Q_{n,T}}-\frac{(n+1)(T+1)^2}{ n(T Q_{n,T}+T+1)}+\frac{1}{n}\\
&=\frac{nT(T+1)-nQ_{n,T}(2T+1)-Q_{n,T}(T+1)^2}{nQ_{T,n}(TQ_{n,T}+T+1)}+\frac{1}{n}.
\end{split}
\end{equation*}
Taking the absolute value, we have
\begin{equation*}
\begin{split}
\left|\frac{T}{Q_{n,T}}-\frac{\E[N_T|Y_{n,T}]}{n}\right|&\le \frac{nT(T+1)+nQ_{n,T}(2T+1)+Q_{n,T}(T+1)^2}{nQ_{T,n}(TQ_{n,T}+T+1)}+\frac{1}{n}\\
&=O_p\left(\frac{T}{Q_{n,T}^2}+\frac{1}{Q_{n,T}}+\frac{1}{n}+\frac{T}{nQ_{n,T}}\right).
\end{split}
\end{equation*}

To prove \eqref{554}, note that under the assumption that $n \geq 3$, we have
\begin{equation*}
\begin{split}
\E\left[\Var\left(\frac{1}{n}\sum_{i=0}^{n}X_{i,n,T}\Big|Y_{n,T}\right)\right]
&=\frac{(n+1)}{n^2}\E\left[\Var\left(X_{i,n,T}\Big|Y_{n,T}\right)\right]\\
&=\frac{n+1}{n^2}\E\left[\frac{T(T+1)(1-Y_{n,T})}{(TY_{n,T}+1)^2}\right]\\
&\le \frac{n+1}{n^2}\E\left[\frac{T(T+1)}{T^2Y_{n,T}^2}\right]\\
&=\frac{(n+1)(T+1)}{n^2T} \int_0^1 \frac{1}{y^2} \frac{n\delta_Ty^{n-1}}{(y+\delta_T-y\delta_T)^{n+1}} \ dy\\
&=\frac{(n+1)(T+1)}{n^2T} \cdot \frac{ n(n-1)+2nT+2T^2}{(n-1)(n-2)}\\
&=O\left(\frac{T^2}{n^3}\right).\\
\end{split}
\end{equation*} 
Now \eqref{554} follows from this result and Chebyshev's inequality.
\end{proof}

\subsection{Proof of Theorem \ref{Theorem}}\label{Subsection: Proof of Theorem}
\begin{proof}
In view of \eqref{Recovering the branch length from the coalescent point process: definition of L^k_n,T} and Proposition \ref{Proposition: central limit theorem for an approximated version}, it suffices to show that for $k=1,2,\dots, K$, we have
$$\sum_{i=1}^{n-k-1}\widetilde{L}^k_{i}-\frac{1}{N_{T_n}}\sum_{i=0}^{n-k}L^k_{i,n,T_n}=o_p\left(\sqrt{\frac{\log n}{n}}\right).$$
Using the triangle inequality, it suffices to show
\begin{align}
&\sum_{i=1}^{n-k-1} (\widetilde{L}^k_{i}-\widetilde{L}^k_{i,n})=o_p\left(\sqrt{\frac{\log n}{n}}\right), \label{Proof of theorem: main1}\\
&\sum_{i=1}^{n-k-1} (\widetilde{L}^k_{i,n}-\widetilde{L}^k_{i,n,T_n})=o_p\left(\sqrt{\frac{\log n}{n}}\right), \label{Proof of theorem: main2} \\
&\sum_{i=1}^{n-k-1} \widetilde{L}^k_{i,n,T_n}-\frac{1}{N_{T_n}}\sum_{i=0}^{n-k}L^k_{i,n,T_n}=o_p\left(\sqrt{\frac{\log n}{n}}\right) \label{Proof of theorem: main3}.
\end{align} 

The claim \eqref{Proof of theorem: main1} is true by Lemma \ref{Lemma: widetilde L^k_i,n-widetilde L^k_i}. For \eqref{Proof of theorem: main2}, under the condition that $n/T_n\rightarrow0$ as $n\rightarrow\infty$, Lemma \ref{Lemma: coupling of Q_n,T and Q_n} implies that
$$
\mathbb{P}(Q_{n,T_n}\neq Q_{n})=o(1),
$$
and that
$$
\E\left[\left|\widetilde{L}^k_{i,n,T_n}-\widetilde{L}^k_{i,n}\right|\mathbbm{1}_{\{Q_{n,T_n}= Q_{n}\}}\right]=O\left(\frac{\log n}{n^2}\right)=o\left(\sqrt{\frac{\log n}{n^3}}\right).
$$
By Markov's inequality and the triangle inequality, we have
 $$
 \sum_{i=1}^{n-k} \left(\widetilde{L}^k_{i,n,T_n}-\widetilde{L}^k_{i,n}\right)=o_p\left(\sqrt{\frac{\log n}{n}}\right).
 $$
 
To prove \eqref{Proof of theorem: main3}, we have
\begin{align}\label{Proof of theorem: last claim}
&\sum_{i=1}^{n-k-1} \widetilde{L}^k_{i,n,T_n}-\frac{1}{N_{T_n}}\sum_{i=0}^{n-k}L^k_{i,n,T_n} \nonumber \\
&\qquad=\frac{1}{n}\sum_{i=1}^{n-k-1}\left(U_{i,n,T_n}\wedge U_{i+k,n,T_n}\wedge{\sqrt{n}\log\log n}-U_{i+1,n,T_n}\vee\dots\vee U_{i+k-1,n,T_n}\right)^{+} \nonumber \\ 
&\qquad\qquad-\frac{T_n}{Q_{n,T_n}N_{T_n}}\sum_{i=1}^{n-k-1}\left(U_{i,n,T_n}\wedge U_{i+k,n,T_n}-U_{i+1,n,T_n}\vee\dots\vee U_{i+k-1,n,T_n}\right)^{+} \nonumber \\
&\qquad\qquad-\frac{1}{N_{T_n}}\left(L^k_{0,n,T_n}+L^k_{n-k,n,T_n}\right) \nonumber \\
&\qquad=\Bigg(\frac{1}{n}\sum_{i=1}^{n-k-1}\left(U_{i,n,T_n}\wedge U_{i+k,n,T_n}\wedge{\sqrt{n}\log\log n}-U_{i+1,n,T_n}\vee\dots\vee U_{i+k-1,n,T_n}\right)^{+} \nonumber \\
&\qquad\qquad-\frac{T_n}{Q_{n,T_n}N_{T_n}}\sum_{i=1}^{n-k-1}\left(U_{i,n,T_n}\wedge U_{i+k,n,T_n}\wedge{\sqrt{n}\log\log n}-U_{i+1,n,T_n}\vee\dots\vee U_{i+k-1,n,T_n}\right)^{+}\Bigg) \nonumber \\
&\qquad\qquad+\Bigg(\frac{T_n}{Q_{n,T_n}N_{T_n}}\sum_{i=1}^{n-k-1}\left(U_{i,n,T_n}\wedge U_{i+k,n,T_n}\wedge{\sqrt{n}\log\log n}-U_{i+1,n,T_n}\vee\dots\vee U_{i+k-1,n,T_n}\right)^{+} \nonumber \\
&\qquad\qquad-\frac{T_n}{Q_{n,T_n}N_{T_n}}\sum_{i=1}^{n-k-1}\left(U_{i,n,T_n}\wedge U_{i+k,n,T_n}-U_{i+1,n,T_n}\vee\dots\vee U_{i+k-1,n,T_n}\right)^{+}\Bigg) \nonumber \\
&\qquad\qquad-\frac{1}{N_{T_n}}\left(L^k_{0,n,T_n}+L^k_{n-k,n,T_n}\right).
\end{align}
By Lemma \ref{Lemma: coupling of Q_n/n and 1/W} and Lemma \ref{Lemma: coupling of Q_n,T and Q_n}, we have
$$
\mathbb{P}\left(\frac{Q_{n,T_n}}{n}\neq \frac{1}{W}\right)\le\mathbb{P}\left(\frac{Q_{n,T_n}}{n}\neq \frac{Q_{n}}{n}\right)+\mathbb{P}\left(\frac{Q_{n}}{n}\neq \frac{1}{W}\right)=o(1).
$$
Therefore, we have
\begin{equation}\label{Proof of theorem: Q_n,T_n}
Q_{n,T_n}=O_p(n),\qquad\frac{1}{Q_{n,T_n}}=O_p\left(\frac{1}{n}\right),\qquad\frac{1}{Q^2_{n,T_n}}=O_p\left(\frac{1}{n^2}\right).
\end{equation}
The last two equations in Lemma \ref{Lemma: widetilde{L}_{n,T}} combined with \eqref{Proof of theorem: Q_n,T_n} imply that
\begin{equation}\label{Proof of theorem: T/Q-N_T/n}
\frac{T_n}{Q_{n,T_n}}-\frac{N_{T_n}}{n}= O_p\left(\frac{T_n}{n^{3/2}}\right).
 \end{equation}
By this equation and \eqref{Proof of theorem: Q_n,T_n}, we have
\begin{equation}\label{Proof of theorem: N_T}
\begin{split}
\frac{1}{N_{T_n}}&=\frac{1}{\frac{nT_n}{Q_{n,T_n}}+O_p\left(\frac{T_n}{\sqrt{n}}\right)}=\frac{1}{\frac{nT_n}{Q_{n,T_n}}}\cdot\frac{1}{1+O_p\left(\frac{Q_{n,T_n}}{n^{3/2}}\right)}\\
&=\frac{Q_{n,T_n}}{nT_n}\cdot\frac{1}{1+O_p\left(\frac{Q_{n,T_n}}{n^{3/2}}\right)}=O_p\left(\frac{1}{T_n}\right)\cdot\frac{1}{1+O_p\left(\frac{1}{\sqrt{n}}\right)}=O_p\left(\frac{1}{T_n}\right).
\end{split}
\end{equation}
Therefore, dividing \eqref{Proof of theorem: T/Q-N_T/n} by $N_{T_n}$, we have
\begin{equation}\label{TNQ}
\frac{T_n}{Q_{n,T_n}N_{T_n}}-\frac{1}{n}= O_p\left(\frac{1}{n^{3/2}}\right).
\end{equation}
Note that 
$$\sum_{i=1}^{n-k-1}\left(U_{i,n,T_n}\wedge U_{i+k,n,T_n}\wedge{\sqrt{n}\log\log n}-U_{i+1,n,T_n}\vee\dots\vee U_{i+k-1}\right)^{+}=n\sum_{i=1}^{n-k-1} \widetilde{L}^k_{i,n,T_n}=O_p(n)$$
by Proposition \ref{Proposition: central limit theorem for an approximated version} and equations \eqref{Proof of theorem: main1} and \eqref{Proof of theorem: main2}. Therefore, for the first term in \eqref{Proof of theorem: last claim}, we have
\begin{equation}\label{Proof of the main theorem: 1st term in the last claim}
\begin{split}
&\frac{1}{n}\sum_{i=1}^{n-k-1}\left(U_{i,n,T_n}\wedge U_{i+k,n,T_n}\wedge{\sqrt{n}\log\log n}-U_{i+1,n,T_n}\vee\dots\vee U_{i+k-1,n,T_n}\right)^{+}\\
&\quad-\frac{T_n}{Q_{n,T}N_{T_n}}\sum_{i=1}^{n-k-1}\left(U_{i,n,T_n}\wedge U_{i+k,n,T_n}\wedge{\sqrt{n}\log\log n}-U_{i+1,n,T_n}\vee \dots\vee U_{i+k-1,n,T_n}\right)^{+}=O_p\left(\frac{1}{\sqrt{n}}\right).
\end{split}
\end{equation}

For the second term in \eqref{Proof of theorem: last claim}, which accounts for the truncation at $\sqrt{n}\log\log n$, it follows from \eqref{551} that
\begin{align}\label{Proof of theorem: 2nd term in the last claim}
\mathbb{P}&\Bigg(\sum_{i=1}^{n-k-1}\left(U_{i,n,T_n}\wedge U_{i+k,n,T_n}\wedge{\sqrt{n}\log\log n}-U_{i+1,n,T_n}\vee\dots\vee U_{i+k-1, n, T_n}\right)^{+} \nonumber \\
&\quad\neq\sum_{i=1}^{n-k-1}\left(U_{i,n,T_n}\wedge U_{i+k,n,T_n}-U_{i+1,n,T_n}\vee\dots\vee U_{i+k-1, n, T_n}\right)^{+}\Bigg)=O\left(\frac{1}{(\log\log n)^2}\right)=o(1).
\end{align}

For the last term in \eqref{Proof of theorem: last claim}, which accounts for the lengths of the portions of the 0th and the $(n-k)$th branch supporting $k$ leaves, first note from the definition of $L^k_{0,n,T}$ in \eqref{Recovering the branch lenght from coalescent point process: definition of L^k_0,n,T} and the definition of $H_{k,n,T_n}$ that
$$\frac{L_{0,n,T_n}}{N_{T_n}} \leq \frac{H_{k,n,T_n}}{N_{T_n}} = \frac{T_n U_{k,n,T_n}}{Q_{n,T_n} N_{T_n}}.$$
By Lemma \ref{Lemma: coupling of Q_n/n and 1/W} and \ref{Lemma: coupling of Q_n,T and Q_n}, under the assumption that $n/T_n\rightarrow0$ as $n\rightarrow\infty$, we have
\begin{equation*}
\mathbb{P}\left(U_{i}\neq U_{i,n,T}\right)\le \mathbb{P}\left(U_{i}\neq U_{i,n}\right)+\mathbb{P}\left(U_{i,n}\neq U_{i,n,T}\right)=o(1),
\end{equation*}
which implies that $U_{k,n,T_n}$ is $O_p(1)$.  Combining this result with \eqref{TNQ} and applying the same argument to bound $L_{n-k,n,T_n}^k/N_{T_n}$, we get
\begin{equation}\label{Proof of theorem: 3rd term in the last claim}
\frac{1}{N_T}\left(L^k_{0,n,T_n}+L^k_{n-k,n,T_n}\right)=O_p\left(\frac{1}{n}\right).
\end{equation}
By \eqref{Proof of the main theorem: 1st term in the last claim}, \eqref{Proof of theorem: 2nd term in the last claim}, and \eqref{Proof of theorem: 3rd term in the last claim}, we know that the quantity in \eqref{Proof of theorem: last claim} is $O_p\left({1}/{\sqrt{n}}\right)$, which proves \eqref{Proof of theorem: main3}.
\end{proof}

\section{Supercritical case}\label{Section: proof super}

In this section, we turn to the supercritical case and prove Theorem \ref{Theorem: super LLN} and Theorem \ref{Theorem: super}.

\subsection{Approximating the coalescence times}

In the supercritical case, because the coalescent tree is star-shaped, it is more convenient to measure the coalescence times by looking forward from time zero rather than backwards from time $T$. We therefore let  $G_{i,n,T}=T-H_{i,n,T}$, which has density
$$f_{G_{i,n,T}|Y_{n,T}=y}(t)=\frac{y\lambda+(r-y\lambda)e^{-rT}}{y\lambda(1-e^{-rT})} \cdot \frac{y\lambda r^2 e^{-r(T-t)}}{(y\lambda+(r-y\lambda)e^{-r(T-t)})^2},\qquad t\in(0,T).$$
From \eqref{Recovering the branch lenght from coalescent point process: definition of L^k_0,n,T}, we get 
\begin{equation}\label{Proof of theorem super: formula for L^k_0,n,T}
L^{k}_{0,n,T}=\left(H_{k,n,T}-\max_{1\le i\le k-1}H_{i,n,T}\right)^{+}=\left(\min_{1\le i\le k-1}G_{i,n,T}-G_{k,n,T}\right)^{+}.
\end{equation}
Likewise, from \eqref{Recovering the branch lenght from coalescent point process: definition of L^k_i,n,T} and \eqref{Recovering the branch lenght from coalescent point process: definition of L^k_n-k,n,T}, we get
\begin{align}\label{Proof of theorem super: formula for L^k_i,n,T}
L^{k}_{i,n,T}&=\left(H_{i,n,T}\wedge H_{i+k,n,T}-\max_{i+1\le j\le i+k-1} H_{j,n,T}\right)^{+}\nonumber\\
&=\left(\min_{i+1\le j\le i+k-1} G_{j,n,T}- (G_{i,n,T}\vee G_{i+k,n,T})\right)^{+}\qquad \text{ for }1\le i\le n-k-1
\end{align}
and
\begin{equation}\label{Proof of theorem super: formula for L^k_n-k,n,T}
L^{k}_{n-k,n,T}=\left(H_{n-k,n,T}-\max_{n-k+1\le j\le n-1} H_{j,n,T}\right)^{+}=\left(\min_{n-k+1\le j\le n-1} G_{j,n,T}-G_{n-k,n,T}\right)^{+}.
\end{equation}
Arguments similar to those in Sections \ref{Section: approximation of H_i,n,T} and \ref{Section: error bounds} were applied to the supercritical case in \cite{johnson2023estimating}.  We state the relevant results in the supercritical case here.  The proofs can be found in Section 1.1 of the supplemental information in \cite{johnson2023estimating}. 

For large $n$ and $T_n$, the random variable $G_{i, n, T_n}$ is well approximated by $G_{i}$, which can be defined in the following way:
\begin{enumerate}
\item Choose $W$ from the exponential distribution with rate $1$.

\item Choose $U_{i}$ for $i=1,\dots, n-1$ independently from the logistic distribution, with density
$$
f_{U_{i}}(u)=\frac{e^u}{\left(1+e^u\right)^{2}}, \quad u \in(-\infty, \infty).
$$
\item Let $G_{i}=\frac{1}{r_n}\left(\log (1 / W)+U_{i}+\log n\right)$.
\end{enumerate}
\begin{remark}
The random variables $U_i$ represent the fluctuations in the coalescence times. This logistic distribution for the coalescence times also arises when each individual is sampled independently with probability $y$ and we let $y \rightarrow 0$. See equation (4.7) of \cite{ignatieva2020characterisation}.   
\end{remark}

The error from this approximation is bounded by the following lemma, whose proof can be found in Lemma 6 of the supplemental information in \cite{johnson2023estimating}.
\begin{lemma}\label{Lemma: corollory to the error bounds, preparation}
Assume condition \eqref{Introduction: sequence condition super LLN} holds. Then as $n\rightarrow\infty$,
 \begin{equation*}
\frac{r_n}{n}\sum_{i=1}^{n-1}\left|G_{i,n,T_n}-G_{i}\right|\stackrel{P}{\longrightarrow}0.
\end{equation*}
Assume condition \eqref{Introduction: sequence condition super} holds. Then as $n\rightarrow\infty$,
\begin{equation*}
 \frac{r_n}{\sqrt{n}}\sum_{i=1}^{n-1}\left|G_{i,n,T_n}-G_{i}\right|\stackrel{P}{\longrightarrow}0.
\end{equation*}
\end{lemma}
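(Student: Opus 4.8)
The plan is to transfer the maximal-coupling scheme of Section~\ref{Section: error bounds} to the forward coalescence times. Following Section~1.1 of the supplement to \cite{johnson2023estimating}, I would interpolate between $G_{i,n,T_n}$ and $G_i$ through an intermediate family $G_{i,n}=\frac{1}{r_n}\big(\log(Q_n/n)+U_{i,n}+\log n\big)$, obtained by replacing $Q_{n,T_n}$ by its limiting version $Q_n$ and $U_{i,n,T_n}$ by its large-$T$ conditional law $U_{i,n}$, and then passing to $G_i$ by replacing $Q_n/n$ by $1/W$ and $U_{i,n}$ by the logistic variable $U_i$. Each arrow $G_{i,n,T_n}\to G_{i,n}\to G_i$ is realized by a pair of maximal couplings: one of $Q_{n,T_n}$ with $Q_n$ — the supercritical analogue of Lemma~\ref{Lemma: coupling of Q_n,T and Q_n}, with $\mathbb{P}(Q_{n,T_n}\neq Q_n)=O(ne^{-r_nT_n})$ — followed, on the event that the $Q$'s agree, by a maximal coupling of $U_{i,n,T_n}$ with $U_{i,n}$; and one of $Q_n/n$ with $1/W$, with $\mathbb{P}(Q_n/n\neq1/W)=O(1/n)$ by Lemma~\ref{Lemma: coupling of Q_n/n and 1/W}, followed by a maximal coupling of $U_{i,n}$ with $U_i$. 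The point of the additive form of $G_i$ is that on the event $A:=\{Q_{n,T_n}=Q_n\}\cap\{Q_n/n=1/W\}$ the three $\log$-terms coincide, so that when the fluctuation couplings also succeed one has $G_{i,n,T_n}=G_i$ exactly, and in general $|G_{i,n,T_n}-G_i|\,\mathbbm 1_A\le r_n^{-1}\big(|U_{i,n,T_n}-U_{i,n}|\,\mathbbm 1_{\{Q_{n,T_n}=Q_n\}}+|U_{i,n}-U_i|\,\mathbbm 1_{\{Q_n/n=1/W\}}\big)$. These coupling estimates, with the indicated rates, are Lemma~6 and the surrounding results in the supplement to \cite{johnson2023estimating}.

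For the first assertion, write
$$\sum_{i=1}^{n-1}|G_{i,n,T_n}-G_i|\;\le\;\mathbbm 1_{A^c}\sum_{i=1}^{n-1}\big(G_{i,n,T_n}+|G_i|\big)\;+\;\sum_{i=1}^{n-1}|G_{i,n,T_n}-G_i|\,\mathbbm 1_A .$$
Under \eqref{Introduction: sequence condition super LLN} one has $\mathbb{P}(A^c)=O(ne^{-r_nT_n})+O(1/n)=o(1)$, and since $\frac{r_n}{n}\sum_i(G_{i,n,T_n}+|G_i|)$ is almost surely finite, its product with $\mathbbm 1_{A^c}$ tends to $0$ in probability, and likewise with $r_n/\sqrt{n}$ in place of $r_n/n$. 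For the second term I would bound $\E[|U_{i,n,T_n}-U_{i,n}|\,\mathbbm 1_{\{Q_{n,T_n}=Q_n\}}]$ and $\E[|U_{i,n}-U_i|\,\mathbbm 1_{\{Q_n/n=1/W\}}]$ by integrating the pointwise density comparisons — the supercritical analogues of \eqref{Error between widetilde L^k_i and  widetilde L^k_i,n: comparing density} and \eqref{Error between widetilde L^k_i,n and widetilde L^k_i,n,T: comparing density} — against the maximal couplings, getting per-index bounds of the shape $\E[|G_{i,n}-G_i|\,\mathbbm 1_{\{Q_n/n=1/W\}}]=O\big(\tfrac{\log n}{r_nn}\big)$ and $\E[|G_{i,n,T_n}-G_{i,n}|\,\mathbbm 1_{\{Q_{n,T_n}=Q_n\}}]=O\big(\tfrac{\log n}{r_n}e^{-r_nT_n}\big)$, the latter being the supercritical counterpart of Lemma~\ref{Lemma: widetilde L^k_i,n,T-widetilde L^k_i,n}. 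Summing over $i$ and applying Markov's inequality, the second term multiplied by $r_n/n$ has expectation $O(1/n)+O\big(\tfrac{\log n}{n}e^{-r_nT_n}\big)=o(1)$ under \eqref{Introduction: sequence condition super LLN}, which gives the first assertion.

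The second assertion follows by rerunning this decomposition with $r_n/\sqrt{n}$ in place of $r_n/n$; the $A^c$ term still vanishes because \eqref{Introduction: sequence condition super} implies \eqref{Introduction: sequence condition super LLN}. The step that calls for the stronger hypothesis is the treatment of the rare event $\{Q_{n,T_n}\neq Q_n\}$: there $G_{i,n,T_n}$ is no longer of its typical size $O((\log n)/r_n)$ but can be as large as $T_n=(r_nT_n)/r_n$, and bounding this contribution through a first moment instead of the vanishing-probability device gives, per index, $\E[G_{i,n,T_n}\mathbbm 1_{\{Q_{n,T_n}\neq Q_n\}}]=O\big(\tfrac{\log n}{r_n}\,\mathbb{P}(Q_{n,T_n}\neq Q_n)\big)=O\big(\tfrac{\log n}{r_n}\,ne^{-r_nT_n}\big)$, hence after summing over $i$ and multiplying by $r_n/\sqrt{n}$ a term of order $n^{3/2}(\log n)e^{-r_nT_n}$, which is precisely what \eqref{Introduction: sequence condition super} forces to $0$. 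I expect the bookkeeping of these rare-event contributions — deciding, according to the normalization, between a worst-case bound and a moment estimate — to be the only genuinely delicate point; everything else is a routine transcription of Section~\ref{Section: error bounds}, and the complete verification is in Section~1.1 of the supplement to \cite{johnson2023estimating}.
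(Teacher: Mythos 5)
You should first note that the paper contains no internal proof of this lemma to compare against: it is imported wholesale, with the sentence ``whose proof can be found in Lemma 6 of the supplemental information in \cite{johnson2023estimating}.'' Your proposal reconstructs the same general machinery that that supplement (and Section \ref{Section: error bounds} of this paper, in the critical case) uses --- interpolation through an intermediate approximation, maximal couplings of $Q_{n,T_n}$ with $Q_n$ and of $Q_n/n$ with $1/W$, couplings of the fluctuation variables on the event that the $Q$'s agree, a vanishing-probability bad event plus Markov on the good event --- and in the end you defer to the same reference, so at the level of strategy you are aligned with what the paper does.

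As a standalone argument, however, your sketch has one concrete hole. Your key displayed bound rests on the claim that on $A=\{Q_{n,T_n}=Q_n\}\cap\{Q_n/n=1/W\}$ ``the three log-terms coincide,'' so that successful $U$-couplings force $G_{i,n,T_n}=G_i$ exactly. That is false for the natural construction: writing the exact conditional density of $G_{i,n,T_n}$ given $Y_{n,T_n}=y=\delta_{T_n}Q_{n,T_n}$ in logistic form, the additive centering term is $\frac{1}{r_n}\log\bigl(y\lambda_n e^{r_nT_n}/(r_n-y\lambda_n)\bigr)$, which on $A$ differs from $\frac{1}{r_n}\bigl(\log n+\log(1/W)\bigr)=\frac{1}{r_n}\log Q_{n,T_n}$ by a term of order $\frac{1}{r_n}\,O\bigl(Q_{n,T_n}e^{-r_nT_n}\bigr)=O\bigl(ne^{-r_nT_n}/(r_nW)\bigr)$ per index (and the exact fluctuation variable has truncated support). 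So even when every coupling succeeds there is a residual functional-approximation error that your decomposition never controls; summed over $i$ and scaled by $r_n/\sqrt{n}$ it is of order $n^{3/2}e^{-r_nT_n}$, exactly the scale separating \eqref{Introduction: sequence condition super LLN} from \eqref{Introduction: sequence condition super}, so it is not negligible bookkeeping but one of the places the stronger hypothesis is genuinely consumed. Relatedly, your account of where \eqref{Introduction: sequence condition super} enters is internally inconsistent: the vanishing-probability device you invoke for $\mathbbm{1}_{A^c}$ works for any normalization under \eqref{Introduction: sequence condition super LLN} alone, so with your guessed per-index rates the $\sqrt{n}$-normalized statement would already follow from the weaker condition $\sqrt{n}(\log n)e^{-r_nT_n}\to0$; it is the omitted residual above (and the moment bounds on the rare events, if one wants $L^1$ control) that brings the condition $n^{3/2}(\log n)e^{-r_nT_n}\to 0$ into play. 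These are precisely the points that Lemma 6 of the supplement to \cite{johnson2023estimating} has to settle, so your outline, while strategically faithful, does not replace that proof.
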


For $k\ge 2$, let 
\begin{equation} \label{Proof of theorem super: definition of L^k_i}
 L^k_i=\left(\min_{i+1\le j\le i+k-1} G_{j}- (G_{i}\vee G_{i+k}) \right)^{+},\qquad1\le i\le n-k-1 
\end{equation}
be the approximation of the length of the portion of the $i$th branch that supports $k$ leaves when $n$ and $T$ are large.  The next lemma shows that we can use the $L_i^k$ to approximate the quantities $L_{n,T_n}^k$.

\begin{lemma}\label{Lemma: corollory to the error bounds}
Suppose $k\ge 2$.
 Assume condition \eqref{Introduction: sequence condition super LLN} holds. Then as $n\rightarrow\infty$,
\begin{equation*}
\frac{r_n}{n}\left(L_{n,T_n}^k -\sum_{i=1}^{n-k-1}L^k_{i}\right)\stackrel{P}{\longrightarrow}0.
\end{equation*}
 Assume condition \eqref{Introduction: sequence condition super} holds. Then as $n\rightarrow\infty$,
\begin{equation*}
\frac{r_n}{\sqrt{n}}\left(L_{n,T_n}^k-\sum_{i=1}^{n-k-1}L^k_i\right)\stackrel{P}{\longrightarrow}0.
\end{equation*}
\end{lemma}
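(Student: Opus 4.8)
The plan is to decompose $L_{n,T_n}^k$ according to \eqref{Recovering the branch length from the coalescent point process: definition of L^k_n,T} as
$$
L_{n,T_n}^k = L_{0,n,T_n}^k + \sum_{i=1}^{n-k-1} L_{i,n,T_n}^k + L_{n-k,n,T_n}^k,
$$
and to treat the interior sum and the two boundary branches separately. For the interior sum, the key observation is that the map $(g_i,\dots,g_{i+k})\mapsto\bigl(\min_{i+1\le j\le i+k-1} g_j-(g_i\vee g_{i+k})\bigr)^{+}$ is $1$-Lipschitz in the $\ell^{1}$ sense, since $\min$, $\max$ and $t\mapsto t^{+}$ are each $1$-Lipschitz. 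Comparing \eqref{Proof of theorem super: formula for L^k_i,n,T} with \eqref{Proof of theorem super: definition of L^k_i}, this yields
$$
\bigl|L^{k}_{i,n,T_n}-L^{k}_{i}\bigr|\le\sum_{j=i}^{i+k}\bigl|G_{j,n,T_n}-G_{j}\bigr|,\qquad 1\le i\le n-k-1,
$$
so that, summing over $i$ and noting that each index $j\in\{1,\dots,n-1\}$ is counted at most $k+1$ times,
$$
\sum_{i=1}^{n-k-1}\bigl|L^{k}_{i,n,T_n}-L^{k}_{i}\bigr|\le(k+1)\sum_{j=1}^{n-1}\bigl|G_{j,n,T_n}-G_{j}\bigr|.
$$
Multiplying by $r_n/n$ and invoking the first part of Lemma~\ref{Lemma: corollory to the error bounds, preparation} shows this tends to $0$ in probability under \eqref{Introduction: sequence condition super LLN}; multiplying by $r_n/\sqrt n$ and invoking the second part handles \eqref{Introduction: sequence condition super}.

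For the two boundary terms, I would use that $G_{i,n,T_n}=T_n-H_{i,n,T_n}\in(0,T_n)$ is nonnegative, so \eqref{Proof of theorem super: formula for L^k_0,n,T} and \eqref{Proof of theorem super: formula for L^k_n-k,n,T} give
$$
0\le L^{k}_{0,n,T_n}\le G_{1,n,T_n},\qquad 0\le L^{k}_{n-k,n,T_n}\le G_{n-1,n,T_n}.
$$
For $j\in\{1,n-1\}$ one has $|G_{j,n,T_n}-G_{j}|\le\sum_{\ell=1}^{n-1}|G_{\ell,n,T_n}-G_{\ell}|$, which is controlled by Lemma~\ref{Lemma: corollory to the error bounds, preparation} after the appropriate rescaling, while $\tfrac{r_n}{n}G_{j}=\tfrac1n\bigl(\log(1/W)+U_{j}+\log n\bigr)\to 0$ in probability because $\log n/n\to 0$, $\log(1/W)/n\to 0$, and $\max_{1\le j\le n-1}|U_{j}|=O_p(\log n)$ by the exponential tails of the logistic distribution (and similarly with $\sqrt n$ in place of $n$). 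Hence $\tfrac{r_n}{n}\bigl(L^{k}_{0,n,T_n}+L^{k}_{n-k,n,T_n}\bigr)\to 0$ in probability under \eqref{Introduction: sequence condition super LLN}, and the analogous statement with $r_n/\sqrt n$ holds under \eqref{Introduction: sequence condition super}. Adding the interior and boundary estimates gives both claims.

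Most of the genuine work is hidden in Lemma~\ref{Lemma: corollory to the error bounds, preparation}; within the present argument the only points requiring care are the Lipschitz/telescoping bound for the interior branches, where one must check that each $|G_{j,n,T_n}-G_{j}|$ is counted a bounded (in $n$) number of times in the sum $\sum_{i=1}^{n-k-1}|L^{k}_{i,n,T_n}-L^{k}_{i}|$, and the treatment of the $0$th and $(n-k)$th branches, which have no matching term in $\sum_{i=1}^{n-k-1}L^{k}_{i}$ and must instead be shown to be negligible on the relevant scale by dominating each of them by a single coalescence time (paralleling the handling of the boundary branches in the critical-case proof around \eqref{Proof of theorem: 3rd term in the last claim}).
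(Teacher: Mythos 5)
Your proposal is correct and follows essentially the same route as the paper: the same decomposition into the interior sum plus the two boundary branches, the same termwise bound $|L^{k}_{i,n,T_n}-L^{k}_{i}|\le\sum_{j=i}^{i+k}|G_{j,n,T_n}-G_{j}|$ handled by Lemma \ref{Lemma: corollory to the error bounds, preparation}, and the same domination of $L^{k}_{0,n,T_n}$ and $L^{k}_{n-k,n,T_n}$ by a single coalescence time $G_{j,n,T_n}\le G_{j}+|G_{j,n,T_n}-G_{j}|$ with $r_nG_{j}/\sqrt{n}=\bigl(\log(1/W)+U_{j}+\log n\bigr)/\sqrt{n}\to0$. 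The only cosmetic differences (bounding the second boundary term via $G_{n-1,n,T_n}$ rather than $G_{n-k+1,n,T_n}$, and invoking a maximal bound on the $U_j$ where a single logistic variable suffices) do not change the argument.
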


\begin{proof}
The proofs under conditions \eqref{Introduction: sequence condition super LLN} and \eqref{Introduction: sequence condition super} are similar. We present the proof under condition \eqref{Introduction: sequence condition super} here. Using \eqref{Recovering the branch length from the coalescent point process: definition of L^k_n,T}, we have
\begin{equation}\label{Proof of theorem super: corrollary to error bounds}
L_{n,T_n}^k - \sum_{i=1}^{n-k-1} L_i^k = \sum_{i=0}^{n-k}L^{k}_{i,n,T_n}-\sum_{i=1}^{n-k-1}L^k_i= L^{k}_{0,n,T_n} + L^k_{n-k,n,T_n}+\sum_{i=1}^{n-k-1} \left(L^{k}_{i,n,T_n}-L^k_{i}\right).
\end{equation}
For the third term in \eqref{Proof of theorem super: corrollary to error bounds}, using the triangle inequality along with \eqref{Proof of theorem super: formula for L^k_i,n,T} and \eqref{Proof of theorem super: definition of L^k_i}, we have
$$
\left|L^{k}_{i,n,T_n}-{L}^{k}_i\right|\le \sum_{j=i}^{i+k} |G_{j,n,T_n}-G_j|.
$$  
Summing over $1\le i\le n-k-1$ and using Lemma \ref{Lemma: corollory to the error bounds, preparation}, we have
\begin{equation}\label{Proof of theorem super: corrollary to error bounds, 1}
\frac{r_n}{\sqrt{n}}\sum_{i=1}^{n-k-1}\left(L^{k}_{i,n,T_n}-L^k_i\right)\stackrel{P}{\longrightarrow}0.
\end{equation}
For the first term in \eqref{Proof of theorem super: corrollary to error bounds}, we have 
\begin{equation}\label{Proof of theorem super: corrollary to error bounds, 2}
\begin{split}
L^{k}_{0,n,T_n}=\left(\min_{1\le i\le k-1}G_{i,n,T_n}-G_{k,n,T_n}\right)^{+}\le G_{1,n,T_n} \le G_{1}+|G_{1,n,T_n}-G_{1}|.
\end{split}
\end{equation}
By Lemma \ref{Lemma: corollory to the error bounds, preparation}, we have
\begin{equation}\label{Proof of theorem super: corrollary to error bounds, 3}
\frac{r_n}{\sqrt{n}}|G_{1,n,T_n}-G_{1}|\stackrel{P}{\longrightarrow}0.
\end{equation}
Also, 
\begin{equation}\label{Proof of theorem super: corrollary to error bounds, 4}
\frac{r_n}{\sqrt{n}} \cdot G_{1} =\frac{1}{\sqrt{n}}\left(\log (1 / W)+U_{i}+\log n\right) \stackrel{P}{\longrightarrow}0.
\end{equation}
Equations \eqref{Proof of theorem super: corrollary to error bounds, 2}, \eqref{Proof of theorem super: corrollary to error bounds, 3}, and \eqref{Proof of theorem super: corrollary to error bounds, 4} imply $L^k_{0,n,T_n} \stackrel{P}{\longrightarrow}0.$  The same argument gives $L^k_{n-k,n,T_n} \stackrel{P}{\longrightarrow}0.$  The lemma follows from these results along with \eqref{Proof of theorem super: corrollary to error bounds} and \eqref{Proof of theorem super: corrollary to error bounds, 1}.
\end{proof}

\subsection{Moment computations}

\begin{lemma}\label{Lemma: moment computation super}
For $k \geq 2$, let $L^k_i$ be defined as in \eqref{Proof of theorem super: definition of L^k_i}.  Then
$$
\mathbb{E}[L^k_i]=\frac{1}{r_n k(k-1)}.
$$
\end{lemma}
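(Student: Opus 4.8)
The plan is to exploit the fact that the common terms in the definition of $G_i$ cancel inside $L_i^k$, reducing the computation to a single integral against the logistic law. First I would observe that since $G_j - G_{j'} = (U_j - U_{j'})/r_n$, the terms $\log(1/W)$ and $\log n$ disappear, so that
$$L_i^k = \frac{1}{r_n}\Big(\min_{i+1 \le j \le i+k-1} U_j - (U_i \vee U_{i+k})\Big)^+.$$
Write $A = U_i \vee U_{i+k}$ and $B = \min_{i+1 \le j \le i+k-1} U_j$; these are independent because they are built from disjoint blocks of the i.i.d.\ sequence $\{U_j\}$.

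Next I would use the layer-cake identity $(B-A)^+ = \int_{-\infty}^{\infty} \mathbbm{1}_{\{A < x\}}\,\mathbbm{1}_{\{x < B\}}\, dx$, take expectations, and apply Fubini together with independence to get
$$\E\big[(B-A)^+\big] = \int_{-\infty}^{\infty} \mathbb{P}(A < x)\,\mathbb{P}(B > x)\, dx = \int_{-\infty}^{\infty} F(x)^2\big(1 - F(x)\big)^{k-1}\, dx,$$
where $F$ is the logistic distribution function, using $\mathbb{P}(A<x) = F(x)^2$ (the maximum of the two outer variables) and $\mathbb{P}(B>x) = (1-F(x))^{k-1}$ (the minimum of the $k-1$ inner variables).

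Finally, I would change variables via $v = F(x)$. The key computational point is the identity $f(x) = F(x)(1-F(x))$ for the logistic density, so that $dx = dv/(v(1-v))$, which turns the integral into a Beta integral:
$$\int_0^1 v^2(1-v)^{k-1}\cdot\frac{dv}{v(1-v)} = \int_0^1 v(1-v)^{k-2}\, dv = \frac{1}{k(k-1)}.$$
Multiplying by $1/r_n$ gives $\E[L_i^k] = 1/(r_n k(k-1))$. There is no substantial obstacle here; the only points to verify carefully are the independence of the inner minimum from the outer maximum and the clean change of variables afforded by $f = F(1-F)$. (The case $k = 2$, where the inner block consists only of $U_{i+1}$, is automatically included since then $(1-v)^{k-2} = 1$.)
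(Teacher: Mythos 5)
Your proposal is correct and follows essentially the same route as the paper: cancel the shared $\log(1/W)+\log n$ terms so that $r_n L_i^k$ depends only on the $U_j$'s, apply the layer-cake/Fubini identity with independence of the outer maximum and inner minimum, and evaluate $\int_{-\infty}^{\infty} F(x)^2(1-F(x))^{k-1}\,dx = \frac{1}{k(k-1)}$. The only cosmetic difference is that you evaluate this last integral via the substitution $v=F(x)$ (a Beta integral), whereas the paper computes $\int_{-\infty}^{\infty} e^{2x}/(1+e^x)^{k+1}\,dx$ directly via $u=1+e^x$.
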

\begin{proof}
We have
$$
\begin{aligned}
\E[L^k_i]&=\mathbb{E}\left[\left(\min _{i+1 \leq j \leq i+k-1} G_{j}-\left(G_{i}\vee G_{i+k}\right)\right)^{+}\right] \\
&=\frac{1}{r_n} \mathbb{E}\left[\left(\min _{i+1 \leq j \leq i+k-1} U_{j }-(U_{i }\vee U_{i+k })\right)^{+}\right] \\
&=\frac{1}{r_n} \int_{-\infty}^{\infty}  \mathbb{P}\left(U_i\vee U_{i+k}\le x\le \min_{i+1\le j\le i+k-1} U_j\right)\ dx\\
&=\frac{1}{r_n}\int_{-\infty}^\infty \left(\frac{e^x}{1+e^x}\right)^2\left(\frac{1}{1+e^x}\right)^{k-1}\ dx\\
&=\frac{1}{r_n}\int_{-\infty}^{\infty} \frac{e^{2x}}{(1+e^x)^{k+1}}\ dx\\
&=\frac{1}{r_n k (k-1)},
\end{aligned}
$$
as claimed.
\end{proof}

We now consider the covariance computation, which is more complicated.  Let $2 \leq k' \leq k$.  We have
$$
\begin{aligned}
\E\left[L^k_iL^{k'}_{i'}\right]&= \mathbb{E}\left[\left(\min _{i+1 \leq j \leq i+k-1} G_{j}- (G_{i} \vee G_{i+k})\right)^{+}\left(\min _{i^{\prime}+1 \leq j^{\prime} \leq i^{\prime}+k^{\prime}-1} G_{j^{\prime}}-(G_{i^{\prime}} \vee G_{i^{\prime}+k^{\prime}})\right)^{+}\right] \\
&=\frac{1}{r_n^2} \mathbb{E}\left[\left(\min _{i+1 \leq j \leq i+k-1} U_{j}-(U_{i}\vee U_{i+k}) \right)^{+}\left(\min _{i^{\prime}+1 \leq j^{\prime} \leq i^{\prime}+k^{\prime}-1} U_{j'}- (U_{i'} \vee U_{i'+k'}) \right)^{+}\right] \\
&=\frac{1}{r_n^2} \int_{-\infty}^{\infty} \int_{-\infty}^{\infty} \mathbb{P}\left(U_{i }\vee U_{i+k }<y<\min _{i+1 \leq j \leq i+k-1} U_{j },\right.\\
&\qquad\qquad\qquad\qquad\qquad \left.U_{i' }\vee U_{i'+k' }<z<\min _{i^{\prime}+1 \leq j^{\prime} \leq i^{\prime}+k^{\prime}-1} U_{j' }\right) \ dy \ dz.
\end{aligned}
$$
Define
$$
F\left(i, i^{\prime}, k, k^{\prime}, y, z\right)=\mathbb{P}\left(U_{i } \vee U_{i+k }<y<\min _{i+1 \leq j \leq i+k-1} U_{j },\ U_{i'} \vee U_{i'+k' }<z<\min _{i^{\prime}+1 \leq j^{\prime} \leq i^{\prime}+k^{\prime}-1} U_{j' }\right).
$$
Then
\begin{align*}
&\Cov\left(\frac{r_n}{\sqrt{n}} \sum_{i=1}^{n-k-1}L^k_i, \frac{r_n}{\sqrt{n}}\sum_{i=1}^{n-k'-1} L^{k'}_i\right) \\
&\qquad=\frac{1}{n}\sum_{i=1}^{n-k-1}\sum_{i'=1}^{n-k'-1}\left(\int_{-\infty}^\infty\int_{-\infty}^\infty F(i,i',k,k',y,z)\ dy \ dz-\frac{1}{ k(k-1)k'(k'-1)}\right).
\end{align*}
Notice that $\Cov(L^k_i,L^{k'}_{i'})=0$ if $\{i,i+1,\dots,i+k\}$ is disjoint from $\{i',i'+1,\dots, i'+k'\}$.  Therefore, if we define
\begin{equation}\label{Vkk}
V_{k,k'} := \sum_{i'=i-k'}^{i+k} \bigg(\int_{-\infty}^\infty\int_{-\infty}^\infty F(i,i',k,k',y,z)\ dy \ dz \bigg) - \frac{k+k'+1}{ k(k-1)k'(k'-1)},
\end{equation}
where the choice of $i$ is arbitrary, then
\begin{equation}\label{CovLimit}
\lim_{n \rightarrow \infty} \Cov\left(\frac{r_n}{\sqrt{n}} \sum_{i=1}^{n-k-1}L^k_i, \frac{r_n}{\sqrt{n}}\sum_{i=1}^{n-k'-1} L^{k'}_i\right) = V_{k,k'}.
\end{equation}

To calculate the limiting covariances $V_{k,k'}$ when $2 \leq k' \leq k$, we need to evaluate $F\left(i, i^{\prime}, k, k^{\prime}, y, z\right)$.  Let $$F(x)=\frac{e^{x}}{1+e^{x}}$$ be the c.d.f. for the standard logistic distribution.  We consider the following eight cases:
\begin{enumerate}
\item Suppose $i' = i - k'$.  Observe that $F\left(i, i^{\prime}, k, k^{\prime}, y, z\right)$ is the probability that we have $U_{i'} < z$, $U_{i'+1}, \dots, U_{i'+k'-1} > z$, $U_i < z$, $U_i < y$, $U_{i+1}, \dots, U_{i+k-1} > y$, and $U_{i+k} < y$.  If $y < z$, then the condition $U_i < z$ can be removed, and if $z < y$, the condition $U_i < y$ can be removed.  Therefore,
\begin{equation}\label{F1}
F\left(i, i^{\prime}, k, k^{\prime}, y, z\right) = \left\{
\begin{array}{ll} F(y)^2 F(z) (1 - F(y))^{k-1} (1 - F(z))^{k'-1} & \mbox{ if } y < z, \\
F(y) F(z)^2 (1 - F(y))^{k-1} (1 - F(z))^{k'-1} & \mbox{ if } y > z. \end{array} \right.
\end{equation}

\item Suppose $i - k' < i' < i$.  Note that if $y < z$, then we can not have $U_i < y$ and $U_{i'+1}, \dots, U_{i'+k'-1} > z$ because $i' < i < i'+k'$.  If $y > z$, then we can not have both $U_{i'+k'} < z$ and $U_{i+1}, \dots, U_{i+k-1} > y$ because $i < i'+k' < i + k$.  Therefore, $$F\left(i, i^{\prime}, k, k^{\prime}, y, z\right) = 0.$$

\item Suppose $i' = i$ and $k' < k$.  Suppose first that $y < z$.  Then $F\left(i, i^{\prime}, k, k^{\prime}, y, z\right)$ is the probability that $U_i < y$, $U_{i+1}, \dots, U_{i+k'-1} > z$, $y < U_{i+k'} < z$, $U_{i+k'+1}, \dots, U_{i+k-1} > y$, and $U_{i+k} < y$.  If instead $y > z$, then the condition $y < U_{i+k'} < z$ can not hold.  Therefore,
\begin{equation}\label{F2}
F\left(i, i^{\prime}, k, k^{\prime}, y, z\right) = \left\{
\begin{array}{ll} F(y)^2(F(z) - F(y))(1 - F(y))^{k-k'-1}(1 - F(z))^{k'-1} & \mbox{ if } y < z, \\
0 & \mbox{ if } y > z. \end{array} \right.
\end{equation}

\item Suppose $i = i'$ and $k' = k$.  If $y < z$, then $F\left(i, i^{\prime}, k, k^{\prime}, y, z\right)$ is the probability that $U_i < y$, $U_{i+k} < y$, and $U_{i+1}, \dots, U_{i+k-1} > z$.  If $y > z$, then $F\left(i, i^{\prime}, k, k^{\prime}, y, z\right)$ is the probability that $U_i < z$, $U_{i+k} < z$, and $U_{i+1}, \dots, U_{i+k-1} > y$.  Therefore,
\begin{equation}\label{F3}
F\left(i, i^{\prime}, k, k^{\prime}, y, z\right) = \left\{
\begin{array}{ll} F(y)^2 (1 - F(z))^{k-1} & \mbox{ if } y < z, \\
F(z)^2(1 - F(y))^{k-1} & \mbox{ if } y > z. \end{array} \right.
\end{equation}

\item Suppose $i' > i$ and $i' + k' < i + k$, which is only possible when $k - k' \geq 2$.  Then $F\left(i, i^{\prime}, k, k^{\prime}, y, z\right)$ is the probability that $U_i < y$, $U_{i+k} < y$, $y < U_{i'} < z$, $y < U_{i'+k'} < z$, and the other values of $U_j$ for $i < j < i+k$ must be greater than $y$, and must also be greater than $z$ if $i' < j < i'+k'$.  This is only possible when $y < z$.  Therefore,
\begin{equation}\label{F4}
F\left(i, i^{\prime}, k, k^{\prime}, y, z\right) = \left\{
\begin{array}{ll} F(y)^2(F(z) - F(y))^2(1 - F(y))^{k-k'-2}(1 - F(z))^{k'-1} & \mbox{ if } y < z, \\
0 & \mbox{ if } y > z. \end{array} \right.
\end{equation}

\item Suppose $i' > i$ and $i' + k' = i + k$, which is only possible when $k > k'$.  Suppose first that $y < z$.  Then $F\left(i, i^{\prime}, k, k^{\prime}, y, z\right)$ is the probability that $U_i < y$, $U_{i+1}, \dots, U_{i'-1} > y$, $y < U_{i'} < z$, $U_{i'+1}, \dots, U_{i'+k'-1} > z$, and $U_{i+k} > y$.  If instead $y > z$, then the condition that $y < U_{i'} < z$ can not hold.  Therefore, noting that $i' - i = k - k'$, the expression for $F\left(i, i^{\prime}, k, k^{\prime}, y, z\right)$ is the same as in (\ref{F2}).

\item Suppose $i < i' < i + k < i' + k'$.  This is similar to Case 2.  Note that if $y < z$, then we can not have both $U_{i+k} < y$ and $U_{i'+1}, \dots, U_{i'+k'-1} > z$ because $i' < i+k < i'+k'$.  If $y > z$, then we can not have both $U_{i'} < z$ and $U_{i+1}, \dots, U_{i+k-1} > y$ because $i < i' < i+k$.  It follows that $F\left(i, i^{\prime}, k, k^{\prime}, y, z\right) = 0.$

\item Suppose $i' = i + k$.  This is similar to Case 1.  Here $F\left(i, i^{\prime}, k, k^{\prime}, y, z\right)$ is the probability that
$U_i < y$, $U_{i+1}, \dots, U_{i+k-1} > y$, $U_{i'} < y$, $U_{i'} < z$, $U_{i'+1}, \dots, U_{i'+k'-1} > z$, and $U_{i'+k'} < z$.
If $y < z$, then the condition $U_{i'} < z$ can be removed, and if $z < y$, the condition $U_i < y$ can be removed.  Therefore, the expression for $F\left(i, i^{\prime}, k, k^{\prime}, y, z\right)$ is the same as in (\ref{F1}).
\end{enumerate}

When $k = k'$, we have Case 4 above but not Cases 3, 5, or 6.  When $k = k'+1$, we do not have Cases 4 or 5, but we have Cases 3 and 6.  When $k \geq k'+2$, we have Cases 3, 5, and 6, but not Case 4.  Now let $I_{1,k,k'}$, $I_{2,k,k'}$, $I_{3,k}$, and $I_{4,k,k'}$ be the values of the double integral in (\ref{Vkk}) when the expression for $F\left(i, i^{\prime}, k, k^{\prime}, y, z\right)$ is given by (\ref{F1}), (\ref{F2}), (\ref{F3}), and (\ref{F4}) respectively.  Then
\begin{equation}\label{Vkkformula}
V_{k,k'} = - \frac{k+k'+1}{k(k-1)k'(k'-1)} + \left\{
\begin{array}{ll} 2I_{1,k,k'} + I_{3,k} & \mbox{ if } k = k' \\
2I_{1,k,k'} + 2I_{2,k,k'} & \mbox{ if } k = k' + 1 \\
2I_{1,k,k'} + 2I_{2,k,k'} + (k - k' - 1)I_{4,k,k'} & \mbox{ if } k \geq k' + 2,
\end{array} \right.
\end{equation}
where
\begin{align*}
I_{1,k,k'} &= \int_{-\infty}^{\infty} \int_{-\infty}^z \frac{e^{2y}}{(1 + e^y)^{k+1}} \cdot \frac{e^z}{(1 + e^z)^{k'}} \: dy \: dz + \int_{-\infty}^{\infty} \int_{-\infty}^y \frac{e^{y}}{(1 + e^y)^{k}} \cdot \frac{e^{2z}}{(1 + e^z)^{k'+1}} \: dz \: dy \\
I_{2,k,k'} &= \int_{-\infty}^{\infty} \int_{-\infty}^z \frac{e^{2y}}{(1 + e^y)^{k-k'+1}} \cdot \frac{1}{(1 + e^z)^{k'-1}} \cdot \bigg( \frac{e^z}{1 + e^z} - \frac{e^y}{1+ e^y} \bigg) \: dy \: dz \\
I_{3,k} &= \int_{-\infty}^{\infty} \int_{-\infty}^z \frac{e^{2y}}{(1 + e^y)^2} \cdot \frac{1}{(1 + e^z)^{k-1}} \: dy \: dz + \int_{-\infty}^{\infty} \int_{-\infty}^y \frac{1}{(1 + e^y)^{k-1}} \cdot \frac{e^{2z}}{(1 + e^z)^2} \: dz \: dy \\
I_{4,k,k'} &= \int_{-\infty}^{\infty} \int_{-\infty}^z \frac{e^{2y}}{(1 + e^y)^{k-k'}} \cdot \frac{1}{(1 + e^z)^{k'-1}} \cdot \bigg( \frac{e^z}{1 + e^z} - \frac{e^y}{1+ e^y} \bigg)^2 \: dy \: dz.
\end{align*}
The evaluation of these integrals is tedious, and we defer details to the appendix.  One can compute
\begin{equation}\label{I1final}
I_{1,k,k'} = \frac{1}{(k-1)(k'-1)(k+k'-1)},
\end{equation}
\begin{equation}\label{I2final}
I_{2, k, k'} = \left\{
\begin{array}{ll}\frac{1}{2(k-2)} + \frac{1}{2(k-1)} - \frac{\pi^2}{6} + \sum_{j=1}^{k-2} \frac{1}{j^2} & \mbox{ if } k = k' + 1 \\
\frac{k+k'-2}{(k'-1)(k-1)(k-k')(k-k'+1)} - \frac{2}{(k-k'-1)(k-k')(k-k'+1)} \sum_{j=k'}^{k-2} \frac{1}{j} & \mbox{ if } k \geq k' + 2, \end{array} \right.
\end{equation}
\begin{equation}\label{I3final}
I_{3,k} = \frac{\pi^2}{3} - \frac{2}{k-1} - 2 \sum_{j=1}^{k-2} \frac{1}{j^2},
\end{equation}
and
\begin{equation}\label{I4final}
I_{4, k, k'} = \left\{
\begin{array}{ll} \frac{1}{6(k-3)} - \frac{5}{6(k-2)} - \frac{1}{3(k-1)} + \frac{\pi^2}{6} - \sum_{j=1}^{k-2} \frac{1}{j^2} & \mbox{ if } k = k' + 2, \\
\frac{1}{(k'-1)(k-k')(k-k'+1)} + \frac{6}{(k-k'-2)(k-k'-1)(k-k')(k-k'+1)} \sum_{j=k'}^{k-2} \frac{1}{j} \\
\qquad \qquad - \frac{1}{k'(k-k'-2)(k-k'-1)} - \frac{2}{(k-1)(k-k'-1)(k-k')(k-k'+1)} & \mbox{ if } k \geq k' +3. \end{array} \right.
\end{equation}
The formulas for $V_{k,k'}$ in \eqref{finalcov} can be obtained from \eqref{Vkkformula}, \eqref{I1final}, \eqref{I2final}, \eqref{I3final}, and \eqref{I4final}, along with some tedious algebra.

\subsection{Proof of Theorem  \ref{Theorem: super LLN}}
\begin{proof}
Let $k \geq 2$.  By Lemma \ref{Lemma: corollory to the error bounds}, it suffices to show that as $n \rightarrow \infty$,
\begin{equation}\label{superLLNsuff}
\frac{r_n}{n}\sum_{i=1}^{n-k-1} L^{k}_i\stackrel{P}{\longrightarrow}\frac{1}{k(k-1)}.
\end{equation}
By Lemma \ref{Lemma: moment computation super} and \eqref{CovLimit}, we have
\begin{equation*}
\begin{split}
&\lim_{n\rightarrow\infty}\E\left[\left(\frac{r_n}{n}\sum_{i=1}^{n-k-1} L^{k}_i-\frac{1}{k(k-1)}\right)^2\right]\\
&\qquad=\lim_{n\rightarrow\infty}\Var\left(\frac{r_n}{n}\sum_{i=1}^{n-k-1} L^{k}_i\right)+\lim_{n\rightarrow\infty}\left(\frac{n-k-1}{ nk(k-1)}-\frac{1}{k(k-1)}\right)^2\\
&\qquad=\lim_{n\rightarrow\infty}\frac{1}{n}V_{k,k} \\
&\qquad=0.
\end{split}
\end{equation*}
The result \eqref{superLLNsuff} now follows from Chebyshev's inequality, which completes the proof.
\end{proof}

\subsection{Proof of Theorem \ref{Theorem: super}}

\begin{proof}
Let $V$ be a covariance matrix whose entries are given by \eqref{Vkkformula}.  We will show that 
\begin{equation}\label{Proof of theorem super: sequnce}
\left\{\frac{r_n}{\sqrt{n}}\sum_{i=1}^{n-k-1} \left(L^k_i-\E[ L^k_i]\right)\right\}_{k=2}^{K} \Rightarrow N(0,V).
\end{equation}
We proceed as in the proof of Theorem \ref{Theorem}. Let $\left\{x_{k}\right\}_{k=2}^{K}$ be an arbitrary sequence of nonzero real numbers and consider the linear combination
\begin{equation}\label{Proof of theorem super: linear combination}
\sum_{k=2}^{K} x_{k} r_n \sum_{i=1}^{n-K-1}\left( L^k_i-\E[ L^k_i]\right)=\sum_{i=1}^{n-K-1}\sum_{k=2}^{K}x_k r_n \left( L^k_i-\E[ L^k_i]\right).
\end{equation}
 Let 
\begin{align}\label{Proof of theorem super: definition of L_i}
L_i&=\sum_{k=2}^{K} x_k r_n \left(L^k_i-\E[L^k_i]\right) \nonumber \\
&=\sum_{k=2}^K x_k  \left(\left(\min_{i+1\le j\le i+k-1} U_{j}-U_{i} \vee U_{i+k} \right)^+ -\E\left[\left(\min_{i+1\le j\le i+k-1} U_{j}-U_{i} \vee U_{i+k}\right)^+\right]\right).
\end{align}
Note that $L_i=f(U_{i },\dots,U_{i+K })$ for some continuous function $f$, which depends on the constants $x_k$ but not on $r_n$.
We apply the $m$-dependent central limit theorem for a sequence of random variables to the random variables $L_i$ (see Theorem 3 in \cite{diananda1955central}). We need to check
\begin{equation}\label{Proof of theorem super: clt first condition}\liminf_{n\rightarrow\infty} \frac{1}{n}\Var\left(\sum_{i=1}^{n-K-1}L_i\right)>0,
\end{equation}
and 
\begin{equation}\label{Proof of theorem super: clt second condition}
\lim_{n\rightarrow\infty}\frac{1}{n} \sum_{i=1}^{n-K-1} \E\left[L_i^2\mathbbm{1}_{\left\{|L_i|>\epsilon\sqrt{n}\right\}}\right]=0.
\end{equation}

Assume for now that \eqref{Proof of theorem super: clt first condition} and \eqref{Proof of theorem super: clt second condition} hold.  Then the distribution of the expression in \eqref{Proof of theorem super: linear combination} is asymptotically normal.  Since $\left\{x_{k}\right\}_{k=2}^{K}$ is arbitrary, we obtain \eqref{Proof of theorem super: sequnce} from the Cram\'{e}r-Wold theorem and \eqref{CovLimit}.  To prove Theorem \ref{Theorem: super}, we need to show that \eqref{Proof of theorem super: sequnce} implies that
\begin{equation}\label{NTS1.4}
\left\{\frac{r_n}{\sqrt{n}}\left(L_{n,T_n}^k-\frac{n}{r_nk(k-1)}\right)\right\}_{k=2}^K \Rightarrow N(0,V).
\end{equation}
By Lemma \ref{Lemma: moment computation super}, we have
\begin{equation*}
\E[L^k_i]=\frac{1}{r_n k(k-1)}.
\end{equation*}
It follows that for $k \geq 2$,
\begin{equation}\label{meanapprox}
\lim_{n\rightarrow\infty}\frac{r_n}{\sqrt{n}}\left(\sum_{i=1}^{n-k-1} \E[L^k_i]-\frac{n}{r_n k(k-1)}\right)=\lim_{n\rightarrow\infty}\frac{k+1}{\sqrt{n}k(k-1)}=0.
\end{equation}
The result \eqref{NTS1.4} follows from \eqref{Proof of theorem super: sequnce} and \eqref{meanapprox} along with Lemma \ref{Lemma: corollory to the error bounds}.

It remains to show that \eqref{Proof of theorem super: clt first condition} and \eqref{Proof of theorem super: clt second condition} hold.  For condition \eqref{Proof of theorem super: clt first condition}, recall from \eqref{Proof of theorem super: definition of L_i} that $L_i=f(U_{i },\dots,U_{i+K })$ for some continuous function $f$. Let $\mathcal{A}_{3Kl}$ be the event that
$U_{3Kl }\ge U_{3Kl-1 }\vee U_{3Kl+1 }$ for $l=1,2\dots$. Let $\mathcal{G}$ be the $\sigma$ field generated by $\{U_{i}: i \text{ is not a  multiple of }3K\}_{i=1}^{\infty}$ and the events $\{\mathcal{A}_{3Kl}\}_{l=1}^{\infty}$. Consider the index set 
$$
I_l=\{3Kl-j: j=0,1,2,\dots K\},\qquad l=1,2,\dots
$$
then $\{L_i\}_{i\notin \cup I_l}$ is measurable with respect to $\mathcal{G}$. Also, for $l\neq l'$, $\{L_i\}_{i\in I_l}$ is conditionally independent of $\{L_i\}_{i\in I_{l'}}$ given $\mathcal{G}$. Therefore, 
\begin{equation*}
\begin{split}
\liminf_{n\rightarrow\infty}\frac{1}{n}\Var\left(\sum_{i=1}^{n-K-1}L_i\right)&\ge\liminf_{n\rightarrow\infty}\frac{1}{n} \E\left[\Var\left(\sum_{i=1}^{n-K-1}L_i\Big|\mathcal{G}\right)\right]\\
&\ge\liminf_{n\rightarrow\infty}\frac{1}{n}\E\left[\Var\left(\sum_{l=1}^{\left\lfloor \frac{n-K-1}{3K}\right\rfloor}\sum_{i\in I_l}L_i\Big|\mathcal{G}\right)\right]\\
&=\liminf_{n\rightarrow\infty}\frac{1}{n}\left\lfloor \frac{n-K-1}{3K}\right\rfloor\E\left[\Var\left(\sum_{i\in I_1} L_i\Big|\mathcal{G}\right)\right]\\
&=\frac{1}{3K}\E\left[\Var\left(\sum_{i\in I_1} L_i\Big|\mathcal{G}\right)\right].
\end{split}
\end{equation*}
Therefore, it suffices to show that there exists some constant $\epsilon > 0$ depending only on $x_2,\dots,x_k$ such that
\begin{equation}\label{Proof: sufficient condition for the variance to be order n}
\E\left[\Var\left(\sum_{i\in I_1} L_i\Big|\mathcal{G}\right)\right]\ge\epsilon.
\end{equation}
Now we restrict ourselves to the event $\mathcal{A}_{3K}$. Recall that $\mathcal{A}_{3K}\in\mathcal{G}$, we have
\begin{equation}\label{Proof: conditional variance lower bound}
\begin{split}
\E\left[\Var\left(\sum_{i\in I_1} L_i\Big|\mathcal{G}\right)\right]\ge
\E\left[\Var\left(\sum_{i\in I_1} L_i\Big|\mathcal{G}\right)\mathbbm{1}_{\mathcal{A}_{3K}}\right]=\E\left[\Var\left(\sum_{i\in I_1} L_i\mathbbm{1}_{\mathcal{A}_{3K}}\Big|\mathcal{G}\right)\right].
\end{split}
\end{equation}
The key observation is that $x_k r_n L^k_i\mathbbm{1}_{\mathcal{A}_{3K}}$ is $\mathcal{G}$-measurable for $k\ge 3$. To see why this is true, recall from the definition of $L^k_i$ in \eqref{Proof of theorem super: definition of L^k_i} and the definition of $G_{i}$ that
\begin{equation*}
\begin{split}
x_k r_n L^k_i\mathbbm{1}_{\mathcal{A}_{3K}}=x_k\left(\min_{i+1\le j\le i+k-1}U_{j}-U_{i}\vee U_{i+k }\right)^{+}\mathbbm{1}_{\mathcal{A}_{3K}}.
\end{split}
\end{equation*}
On the event $\mathcal{A}_{3K}$, the following hold:
\begin{enumerate}
\item If $i=3K$ or $i+k=3K$, then 
$$
\min_{i+1\le j\le i+k-1}U_{j}\le U_{i }\vee U_{i+k }
$$ 
and $L^k_i\mathbbm{1}_{\mathcal{A}_{3K}}$ is therefore 0. 
\item If $i+1\le 3K\le i+k-1$, then since $k\ge3$, either $3K-1$ or $3K+1$ is between $i+1$ and $i+k-1$. Since the event $\mathcal{A}_{3K}$ occurs, we have
$$
\left(\min_{i+1\le j\le i+k-1}U_{i }\right)\mathbbm{1}_{\mathcal{A}_{3K}}=\left(\min_{i+1\le j\le i+k-1,i\neq 3K}U_{i }\right)\mathbbm{1}_{\mathcal{A}_{3K}}.
$$
Therefore, $L^k_i\mathbbm{1}_{\mathcal{A}_{3K}}$ is $\mathcal{G}$-measurable. 
\item If $3K\notin\{i,i+1,\dots, i+k\}$, then $L^k_i$ is $\mathcal{G}$-measurable.
\end{enumerate}
For $k=2$, the arguments in case 1 and case 3 also apply. Therefore, the only case when $L^k_i\mathbbm{1}_{\mathcal{A}_{3K}}$ could fail to be $\mathcal{G}$-measurable is when $k=2$ and $i=3K-1$. Therefore, the last term in \eqref{Proof: conditional variance lower bound} is 
$$
\E\left[\Var\left(\sum_{i\in I_1} L_i\mathbbm{1}_{\mathcal{A}_{3K}}\Big|\mathcal{G}\right)\right]
=x_2^2\E\left[\Var\left(\left(U_{3K }-U_{3K-1 }\vee U_{3K+1 }\right)^{+}\mathbbm{1}_{\mathcal{A}_{3K}}\Big|\mathcal{G}\right)\right].
$$
Note that conditional on $\mathcal{G}$, $\left(U_{3K }-U_{3K-1 }\vee U_{3K+1 }\right)^{+}\mathbbm{1}_{\mathcal{A}_{3K}}$ is not almost surely a constant because the probability that $U_{3K }>M$ is strictly positive for all $M$, so the claim \eqref{Proof: sufficient condition for the variance to be order n} is proved.

For condition \eqref{Proof of theorem super: clt second condition}, note that \eqref{Proof of theorem super: definition of L_i} implies that $\E[L_i^2]<\infty$ provided that $\E[U_i^2]<\infty$, which is true because $U_i$ has a logistic distribution. Applying the dominated convergence theorem, we have
\begin{equation*}
\begin{split}
\lim_{n\rightarrow\infty}\frac{1}{n} \sum_{i=1}^{n-K-1} \E\left[L_i^2\mathbbm{1}_{\left\{|L_i|>\epsilon\sqrt{n}\right\}}\right]=\lim_{n\rightarrow\infty} \E\left[L_i^2\mathbbm{1}_{\left\{|L_i|>\epsilon\sqrt{n}\right\}}\right]=0,
\end{split}
\end{equation*}
which proves \eqref{Proof of theorem super: clt second condition}.
\end{proof}

\bigskip
\noindent {\bf {\Large Appendix -- Evaluation of the integrals}}

\bigskip
We explain here how to evaluate the four integrals $I_{1,k,k'}$, $I_{2,k,k'}$, $I_{3,k}$, and $I_{4,k,k'}$, that are needed to calculate the covariance in (\ref{Vkkformula}).  We begin by computing $I_{1,k,k'}$.  Making the substitution $u = 1 + e^y$, we get
\begin{equation}\label{usub}
\int_{-\infty}^z \frac{e^{2y}}{(1 + e^y)^{k+1}} \: dy = \int_1^{1 + e^z} \frac{u - 1}{u^{k+1}} \: du = \int_1^{1 + e^z} \frac{1}{u^k} \: du - \int_1^{1+e^z} \frac{1}{u^{k+1}} \: du.
\end{equation}
Therefore, the first integral in the expression for $I_{1,k,k'}$ becomes
\begin{align}\label{I1int}
&\int_{-\infty}^{\infty} \int_{-\infty}^z \frac{e^{2y}}{(1 + e^y)^{k+1}} \cdot \frac{e^z}{(1 + e^z)^{k'}} \: dy \: dz \nonumber \\
&\qquad= \int_{-\infty}^{\infty} \frac{e^z}{(1 + e^z)^{k'}} \bigg(\int_1^{1+e^z} \frac{1}{u^k} \: du \bigg) \: dz - \int_{-\infty}^{\infty} \frac{e^z}{(1 + e^z)^{k'}} \bigg( \int_1^{1+e^z} \frac{1}{u^{k+1}} \: du \bigg) \: dz.
\end{align}
By making the substitution $u = 1 + e^x$, one can easily check that if $j > 1$, then
\begin{equation}\label{int1}
\int_{-\infty}^{\infty} \frac{e^z}{(1 + e^z)^j} \: dz = \frac{1}{j-1}.
\end{equation}
It follows that if $j > 1$ and $m > 1$, then
\begin{equation}\label{int4}
\int_{-\infty}^{\infty} \frac{e^z}{(1 + e^z)^j} \bigg( \int_1^{1 + e^z} \frac{1}{u^m} \: du \bigg) \: dz = \frac{1}{m-1} \bigg(\frac{1}{j-1} - \frac{1}{j+m-2} \bigg) = \frac{1}{(j-1)(j+m-2)}.
\end{equation}
Therefore, using (\ref{I1int}) and (\ref{int4}), the first integral in the expression for $I_{1,k,k'}$ becomes
\begin{align}
\int_{-\infty}^{\infty} \int_{-\infty}^z \frac{e^{2y}}{(1 + e^y)^{k+1}} \cdot \frac{e^z}{(1 + e^z)^{k'}} \: dy \: dz 
&= \frac{1}{(k'-1)(k'+k-2)} - \frac{1}{(k'-1)(k'+k-1)} \nonumber \\
&= \frac{1}{(k'-1)(k+k'-1)(k+k'-2)}.
\end{align}
The second integral in the expression for $I_{1,k,k'}$ is the same but with the roles of $k$ and $k'$ reversed, so we get
$$\int_{-\infty}^{\infty} \int_{-\infty}^y \frac{e^{y}}{(1 + e^y)^{k}} \cdot \frac{e^{2z}}{(1 + e^z)^{k'+1}} \: dz \: dy = \frac{1}{(k-1)(k+k'-1)(k+k'-2)}.$$
Summing these two integrals gives the result in (\ref{I1final}).

We next compute $I_{3,k}$.  The two integrals in the expression for $I_{3,k}$ are the same, with the roles of $y$ and $z$ reversed, so we only need to compute one integral.  First, by making the substitution $u = 1 + e^y$, we get
$$\int_{-\infty}^{z} \frac{e^{2y}}{(1 + e^y)^2} \: dy = \int_1^{1 + e^z} \frac{w-1}{w^2} \: dw = \log(1 + e^z) - \frac{e^z}{1 + e^z}.$$
It follows that
\begin{equation}\label{I3prelim}
I_{3,k} = 2 \int_{-\infty}^{\infty} \frac{1}{(1 + e^z)^{k-1}} \bigg(\log(1 + e^z) - \frac{e^z}{1 + e^z} \bigg) \: dz.
\end{equation}
Note that if $j$ is a positive integer, then making the substitution $u = 1/(1 + e^x)$ and writing $1/(1-u)$ as an infinite series gives
\begin{equation}\label{logint}
\int_{-\infty}^{\infty} \frac{\log(1 + e^z)}{(1 + e^z)^j} \: dz = -\sum_{m=0}^{\infty} \int_0^1 u^{j+m-1} \log u \: du = \sum_{m=j}^{\infty} \frac{1}{m^2} = \frac{\pi^2}{6} - \sum_{m=1}^{j-1} \frac{1}{m^2}.
\end{equation}
By applying (\ref{int1}) and (\ref{logint}), we get (\ref{I3final}) from (\ref{I3prelim}).

We now consider $I_{2,k,k'}$.  Because this term does not appear when $k = k'$, we assume that $k > k'$.  Using that
\begin{equation}\label{rewrite}
\frac{e^z}{1 + e^z} - \frac{e^y}{1+ e^y} = \frac{1}{1 + e^y} - \frac{1}{1 + e^z},
\end{equation}
we can split the integral into two terms and write
$$I_{2,k,k'} = \int_{-\infty}^{\infty} \int_{-\infty}^z \frac{e^{2y}}{(1 + e^y)^{k-k'+2}} \cdot \frac{1}{(1 + e^z)^{k'-1}} \: dy \: dz - \int_{-\infty}^{\infty} \int_{-\infty}^z \frac{e^{2y}}{(1 + e^y)^{k-k'+1}} \cdot \frac{1}{(1 + e^z)^{k'}} \: dy \: dz.$$
Then using \eqref{usub},
\begin{align}\label{I2prelim}
I_{2,k,k'} &= \int_{-\infty}^{\infty} \frac{1}{(1 + e^z)^{k'-1}} \bigg( \int_1^{1+e^z} \frac{1}{u^{k-k'+1}} \: du - \int_1^{1+e^z} \frac{1}{u^{k-k'+2}} \: du \bigg) \: dz \nonumber \\
&\qquad \qquad - \int_{-\infty}^{\infty} \frac{1}{(1 + e^z)^{k'}} \bigg( \int_1^{1+e^z} \frac{1}{u^{k-k'}} \: du - \int_1^{1+e^z} \frac{1}{u^{k-k'+1}} \: du \bigg) \: dz.
\end{align}
Suppose first that $k \geq k'+2$. For positive integers $m$ and $n$ with $m \leq n$, let $$h(m,n) = \frac{1}{m} + \frac{1}{m+1} + \dots + \frac{1}{n}.$$ Observe that for positive integers $j$ and $m$ with $m \geq 2$, we have, making the substitution $v = 1 + e^z$,
\begin{align}\label{hint}
\int_{-\infty}^{\infty} \frac{1}{(1 + e^z)^j} \bigg( \int_1^{1+e^z} \frac{1}{u^m} \: du \bigg) \: dz &= \frac{1}{m-1} \int_{-\infty}^{\infty} \frac{(1+e^z)^{m-1} - 1}{(1+e^z)^{j+m-1}} \: dz \nonumber \\
&= \frac{1}{m-1} \int_1^{\infty} \frac{v^{m-1} - 1}{v^{j+m-1}(v-1)} \: dv \nonumber \\
&= \frac{1}{m-1} \int_1^{\infty} \frac{1}{v^{j+m-1}} \sum_{\ell=0}^{m-2} v^\ell \: dv \nonumber \\
&= \frac{h(j, j+m-2)}{m-1}.
\end{align}
Therefore,
\begin{align}\label{I2terms}
I_{2,k,k'} &= \frac{h(k'-1, k-2)}{k-k'} - \frac{h(k'-1, k-1)}{k-k'+1} - \frac{h(k', k-2)}{k-k'-1} + \frac{h(k',k-1)}{k-k'} \nonumber \\
&= h(k', k-2) \bigg( \frac{2}{k-k'} - \frac{1}{k-k'+1} - \frac{1}{k-k'-1} \bigg) \nonumber \\
&\qquad + \frac{1}{(k'-1)(k-k')} - \frac{1}{(k'-1)(k-k'+1)} - \frac{1}{(k-1)(k-k'+1)} + \frac{1}{(k-1)(k-k')} \nonumber \\
&= \frac{k+k'-2}{(k'-1)(k-1)(k-k')(k-k'+1)} - \frac{2h(k', k-2)}{(k-k'-1)(k-k')(k-k'+1)}.
\end{align}
Now suppose instead that $k = k' + 1$.  Then the third of the four terms in (\ref{I2prelim}) needs to be evaluated using (\ref{logint}) instead of (\ref{hint}), and we get
\begin{align}\label{I2special}
I_{2,k,k'} &= \frac{1}{k-2} - \frac{1}{2} \bigg(\frac{1}{k-2} + \frac{1}{k-1} \bigg) - \bigg( \frac{\pi^2}{6} - \sum_{j=1}^{k-2} \frac{1}{j^2} \bigg) + \frac{1}{k-1} \nonumber \\
&= \frac{1}{2(k-2)} + \frac{1}{2(k-1)} - \frac{\pi^2}{6} + \sum_{j=1}^{k-2} \frac{1}{j^2}.
\end{align}
The result in (\ref{I2final}) comes from (\ref{I2terms}) and (\ref{I2special}).

It remains to compute $I_{4,k,k'}$.  For this integral, we may assume that $k \geq k' + 2$.  We use (\ref{rewrite}), expand out the square, and break the integral into three terms to get
\begin{align*}
I_{4,k,k'} &= \int_{-\infty}^{\infty} \int_{-\infty}^z \frac{e^{2y}}{(1 + e^y)^{k-k'+2}} \cdot \frac{1}{(1 + e^z)^{k'-1}} \: dy \: dz \\
&\qquad- 2 \int_{-\infty}^{\infty} \int_{-\infty}^z \frac{e^{2y}}{(1 + e^y)^{k-k'+1}} \cdot \frac{1}{(1 + e^z)^{k'}} \: dy \: dz \\
&\qquad \qquad + \int_{-\infty}^{\infty} \int_{-\infty}^z \frac{e^{2y}}{(1 + e^y)^{k-k'}} \cdot \frac{1}{(1 + e^z)^{k'+1}} \: dy \: dz. 
\end{align*}
Then using (\ref{usub}), we get
\begin{align*}
I_{4,k,k'} &= \int_{-\infty}^{\infty} \frac{1}{(1 + e^z)^{k'-1}} \bigg( \int_1^{1 + e^z} \frac{1}{u^{k-k'+1}} \: du - \int_1^{1+e^z} \frac{1}{u^{k-k'+2}} \: du \bigg) \: dz \\
&\qquad -2 \int_{-\infty}^{\infty} \frac{1}{(1 + e^z)^{k'}} \bigg( \int_1^{1 + e^z} \frac{1}{u^{k-k'}} \: du - \int_1^{1+e^z} \frac{1}{u^{k-k'+1}} \: du \bigg) \: dz \\
&\qquad \qquad + \int_{-\infty}^{\infty} \frac{1}{(1 + e^z)^{k'+1}} \bigg( \int_1^{1 + e^z} \frac{1}{u^{k-k'-1}} \: du - \int_1^{1+e^z} \frac{1}{u^{k-k'}} \: du \bigg) \: dz.
\end{align*}
Suppose first that $k \geq k'+3$.  Then from (\ref{hint}), we get
\begin{align}\label{I4main}
I_{4,k,k'} &= \frac{h(k'-1, k-2)}{k-k'} - \frac{h(k'-1,k-1)}{k-k'+1} - \frac{2h(k',k-2)}{k-k'-1} \nonumber \\
&\qquad \qquad + \frac{2h(k',k-1)}{k-k'} + \frac{h(k'+1,k-2)}{k-k'-2} - \frac{h(k'+1,k-1)}{k-k'-1} \nonumber \\
&= h(k',k-2) \bigg(\frac{1}{k-k'-2} - \frac{3}{k-k'-1} + \frac{3}{k-k'} - \frac{1}{k-k'+1} \bigg) \nonumber \\
&\qquad \qquad+ \frac{1}{k-k'} \cdot \frac{1}{k'-1} - \frac{1}{k-k'+1} \bigg( \frac{1}{k'-1} + \frac{1}{k-1} \bigg) + \frac{2}{k-k'} \cdot \frac{1}{k-1} \nonumber \\
&\qquad \qquad+ \frac{1}{k-k'-2} \bigg( - \frac{1}{k'} \bigg) - \frac{1}{k-k'-1} \bigg( -\frac{1}{k'} + \frac{1}{k-1} \bigg) \nonumber \\
&= \frac{6 h(k', k-2)}{(k-k'-2)(k-k'-1)(k-k')(k-k'+1)} + \frac{1}{(k'-1)(k-k')(k-k'+1)} \nonumber \\
&\qquad \qquad - \frac{1}{k'(k-k'-2)(k-k'-1)} - \frac{2}{(k-1)(k-k'-1)(k-k')(k-k'+1)}.
\end{align}
When $k = k'+2$, we instead use (\ref{logint}) for the fifth term and simplify the other terms to get
\begin{align}\label{I4special}
I_{4,k,k'} &= \frac{1}{2} \bigg( \frac{1}{k-3} + \frac{1}{k-2} \bigg) - \frac{1}{3} \bigg( \frac{1}{k-3} + \frac{1}{k-2} + \frac{1}{k-1} \bigg) \nonumber \\
&\qquad \qquad- \frac{2}{k-2} + \bigg( \frac{1}{k-2} + \frac{1}{k-1} \bigg) + \frac{\pi^2}{6} - \sum_{j=1}^{k-2} \frac{1}{j^2} - \frac{1}{k-1} \nonumber \\
&= \frac{1}{6(k-3)} - \frac{5}{6(k-2)} - \frac{1}{3(k-1)} + \frac{\pi^2}{6} - \sum_{j=1}^{k-2} \frac{1}{j^2}.
\end{align}
The result (\ref{I4final}) comes from (\ref{I4main}) and (\ref{I4special}).

\bigskip
\noindent {\bf {\Large Acknowledgments}}

\bigskip
\noindent The authors thank Kit Curtius and Brian Johnson for helpful discussions related to this work, particularly in the supercritical case.

\bibliographystyle{plain}
\bibliography{Ref}

\end{document}